\documentclass[11pt,a4paper]{article}
\usepackage{a4wide}
\usepackage{cite}
\usepackage[utf8]{inputenc}
\usepackage[T1]{fontenc}

\usepackage{amsmath}
\usepackage{amscd}
\usepackage{amsthm}
\usepackage{amssymb}
\usepackage{cases}

\usepackage{graphicx}
\usepackage{latexsym}
\usepackage{color}
\usepackage{array,multirow,makecell}
\setcellgapes{1pt}
\makegapedcells

\bibliographystyle{acm}

\newtheorem{thm}{Theorem}[section]
\newtheorem{prop}[thm]{Proposition}
\newtheorem{lem}[thm]{Lemma} 

\theoremstyle{definition}

\theoremstyle{remark} 
\newtheorem{remark}{Remark}

% sets
\newcommand{\TT}{\mathbb{T}^3}

% operators
\newcommand{\Div}{\mathrm{div}\,}

\newcommand{\di}{\,\mathrm{d}}
\newcommand{\D}{\,\mathrm{D}\,}
\newcommand{\Dt}{\dfrac{\mathrm{d}}{\mathrm{d} t}}

\newcommand{\ov}{\overline}

%variables
\newcommand{\ep}{\varepsilon}

%%%%%%%%%%%%%%%%%%%%%%%%%%%%%%%%%%%%%
%%%%%%%%%%%%%%%%%%%%%%%%%%%%%%%%%%%%%
\title{Compression Effects in Heterogeneous Media}
\author{D. Bresch\footnote{Univ. Grenoble Alpes, Univ. Savoie Mont Blanc,  CNRS, LAMA,  Chamb\'ery, France; didier.bresch@univ-smb.fr}, \v S. Ne\v casov\' a \footnote{Institute of Mathematics, Academy of Sciences of the Czech Republic, Prague;  matus@math.cas.cz}, C. Perrin\footnote{Aix Marseille Univ, CNRS, Centrale Marseille, I2M, Marseille, France; charlotte.perrin@univ-amu.fr}\,}

\begin{document}
\maketitle
\begin{small}
	\begin{center}
		{\bf Abstract}
	\end{center}
We study in this paper compression effects in heterogeneous media with maximal packing constraint. Starting from compressible Brinkman equations, where maximal packing is encoded in a singular pressure and a singular bulk viscosity, we show that the global weak solutions converge (up to a subsequence) to global weak solutions of the two-phase compressible/incompressible Brinkman equations with respect to a parameter 
$\varepsilon$ which measures effects close to the maximal packing value. Depending on the importance of the bulk viscosity with respect to the pressure in the dense regimes, memory effects are activated or not at the limit in the congested (incompressible) domain.
	
	\bigskip
	\noindent{\bf Keywords:} Compressible Brinkman Equations; Maximal Packing; Singular Limit; Free Boundary Problem, Memory Effect.
	
	\medskip
	\noindent{\bf MSC:} 35Q35, 35B25, 76T20.
\end{small}

\section*{Introduction}
    We analyze in this paper macroscopic models for heterogeneous media like mixtures, suspensions or crowds, in dense regimes.  These regimes exhibit interesting behaviors such as transition phases with congestion (also called jamming for granular flows) and non-local (in time and/or in space) effects which are both due to a physical packing constraint, that is the finite size of the microscopic components. At the macroscopic scale this packing constraint corresponds to a maximal density constraint $ \rho \leq \rho^*$. A very challenging issue in physics and mathematics is then to model and analyze the change of behavior in congested domains $\rho = \rho^*$ and close to a transition phase $\rho^*-\ep<\rho<\rho^*$.

Two different approaches are generally considered in the literature to model congestion phenomena at the macroscopic level.
The first one, usually called \emph{hard approach}, consists in coupling compressible dynamics in the free domain $\{\rho < \rho^*\}$, with incompressible dynamics in the congested domain $\{\rho = \rho^*\}$. 
Associated to the incompressibility constraint on the velocity field, an additional potential (seen as the Lagrange multiplier) is activated in the congested regions.
The second one which, by opposition, is called \emph{soft approach}, prevents the apparition of congested phases by introducing in the compressible dynamics repulsive forces which become singular as $\rho$ approaches $\rho^*$.
These repulsive effects can be describe either in the pressure (constraint on the fluid at equilibrium) or in the bulk viscosity coefficient, which represents the resistance of the material to a compression.
The interested reader is referred on these two approaches to~\cite{maury2012} and Section~\ref{sec:literature} below for additional references.
An intuitive link can be made between the two approaches: if the scope of action of the repulsive forces tends to $0$, one expects that the soft congestion model degenerates towards a hard congestion model.
We give in the Section~\ref{sec:literature} below some conjectures on this singular limit and recent results that have been obtained in this direction.
In particular, one interesting conjecture made initially by {\sc Lefebvre-Lepot} and {\sc Maury} in~\cite{lefebvre2011} is that a singular bulk viscosity would degenerate in the singular limit towards a (incompressible) pressure and would activate memory effects in the limit congested domain.  

We want to investigate rigorously the link between soft and hard systems, by showing how the choice of the constitutive laws, the pressure and the bulk viscosity as functions of the density in the soft models, impacts the behavior of the limit hard system in congested regions assuming a constant shear viscosity.  
More precisely, the main objective of this paper is to characterize the respective effects of singular pressure and bulk viscosity close to the maximal density constraint in order to understand when memory and pressure effects are activated on the limit hard congestion system. 
To that end, we consider the following three-dimensional soft congestion system (based on compressible Brinkman equations) in $(0,T)\times \TT$:
\begin{subnumcases}{\label{eq:semi-stat-0}}
	\partial_t \rho_\varepsilon + \Div (\rho_\varepsilon u_\varepsilon) = 0,
	  \label{eq:semi-stat-0-mass}\\
	\nabla p_\ep(\rho_\varepsilon) - \nabla(\lambda_\ep(\rho_\varepsilon) \Div u_\varepsilon) - 2\Div(\mu \D(u_\varepsilon)) + r u_\varepsilon
	 = f 
	     \label{eq:semi-stat-0-mom} 
\end{subnumcases}
with $\rho_\varepsilon$ the density satisfying the constraint
\begin{equation}\label{constraints}
0 \le \rho_\varepsilon < 1 \hbox{ a.e. } (t,x) \in [0,T]\times \TT,
\end{equation}
and $u_\ep$ the velocity vector field in the material.
The coefficients $p_\ep$ and $\lambda_\ep$ are respectively the pressure law and the bulk viscosity coefficient, defined in this paper as
\begin{equation} \label{eq:singular-law}
p_\ep(\rho_\varepsilon) = \ep \left(\dfrac{\rho_\varepsilon}{1-\rho_\varepsilon}\right)^\gamma, \quad
\lambda_\ep(\rho_\varepsilon) = \ep \left(\dfrac{\rho_\varepsilon}{1-\rho_\varepsilon}\right)^\beta \quad \text{with}\quad \gamma, \beta > 1,
\end{equation}
while the shear viscosity is assumed to be constant: $\mu>0$.
Finally, $ru_\ep$ with $r>0$ represents the drag and the right-hand term, $f$, is a given external force.
Initially $\rho_\varepsilon\vert_{t=0} = \rho_0^\varepsilon$ with
\begin{equation}\label{hyp:bound-rho0}
0 \le \rho^0_\varepsilon  \le R_\varepsilon < 1 \hbox{ and } R_\varepsilon \to 0        
   \hbox{ when } \varepsilon \to 0, \qquad
   \frac{1}{|\TT|}\int_{\TT} \rho^0_\varepsilon (x)\, dx\le M^0 <1. 
\end{equation}
 Let us encode the effect of the singular bulk viscosity through the following PDE equation
that may be obtained from the mass equation 
$$\partial_t ( \Lambda_\varepsilon (\rho_\varepsilon)) 
   + {\rm div}(\Lambda_\varepsilon(\rho_\varepsilon) u_\varepsilon) 
       = - \lambda_\varepsilon(\rho_\varepsilon) \, {\rm div} u_\varepsilon$$
where 
\begin{equation}
\label{constraint1}
\Lambda_\varepsilon(\rho_\varepsilon) = \rho_\varepsilon \int_0^{\rho_\varepsilon} \lambda_\varepsilon (\tau)/\tau^2 \, d\tau = 
    \rho_\varepsilon \Bigl[\frac{1}{\beta-1} \, \varepsilon^{(1+\gamma-\beta)/\gamma}(p_\varepsilon(\rho_\varepsilon))^{(\beta-1)/\gamma}\Bigr].
\end{equation}

    The main objective now is to understand the asymptotic regime which may be obtained by letting $\varepsilon$ go to zero.  This corresponds to the limit towards the hard approach explained previously.  Let us assume that $(\rho_\varepsilon, u_\varepsilon, p_\varepsilon(\rho_\varepsilon), \Lambda_\varepsilon(\rho_\varepsilon))$ tends to $(\rho, u, p, \Lambda)$. Then we get the following system in $(0,T)\times \TT$: 
\begin{subnumcases}{\label{LimitSystem1}}
	\partial_t \rho + \Div (\rho u ) = 0 \\
	\nabla p - \nabla \Pi - 2\Div(\mu \D(u)) + ru = f  \\
	0\le \rho \le 1 \hbox{ and }  p \ge 0
\end{subnumcases}
where
\begin{equation}\label{LimitSystem2}
 \Pi = - (\partial_t \Lambda+ {\rm div}(\Lambda u)) \hbox{ with }  \Lambda \ge 0.
\end{equation}
We also get the following limit initial data
\begin{equation} \label{inicongestion}
\rho\vert_{t=0} = \rho^0 \in [0,1], \qquad  \Lambda\vert_{t=0} = \Lambda^0
   \qquad  \hbox{ in }~ \TT. 
\end{equation}
It remains now to close the limit system by deriving two constraints.  One of these constraints will result from Equality \eqref{constraint1} depending on the sign of $1+\gamma - \beta$ appearing explicitly in the power of $\ep$.  For the last constraint, different scenarios will be obtained using one of the two following relations
\begin{equation}\label{constraint2}
(1-\rho_\varepsilon) \, p_\varepsilon(\rho_\varepsilon) =\varepsilon^{1/\gamma} \rho_\varepsilon (p_\varepsilon(\rho_\varepsilon))^{(\gamma-1)/\gamma} , 
\end{equation}
or
\begin{equation} \label{constraint3}
(1-\rho_\varepsilon) \, \Lambda_\varepsilon(\rho_\varepsilon) 
    = c(\beta) \varepsilon^{1/(\beta-1)} \rho_\varepsilon^{\beta/(\beta-1)} (\Lambda_\varepsilon(\rho_\varepsilon))^{(\beta-2)/(\beta-1)}.
\end{equation}

\bigskip

 More precisely, passing to the limit in \eqref{constraint1} and \eqref{constraint2}-\eqref{constraint3}, we find the following relations  in addition to System \eqref{LimitSystem1}--\eqref{inicongestion}: 
\begin{itemize}
\item If $1+\gamma -\beta=0$ (memory and pressure effect):
\begin{equation}\label{MPE}
p = (\beta-1) \Lambda \qquad  \hbox{  and }   \qquad   (1-\rho) \, p=0.
\end{equation}
\item If $1+\gamma - \beta <0$  (memory but no pressure effect):
\begin{equation}\label{MPPE}
p = 0 \qquad   \hbox{  and }   \qquad  
           (1-\rho)\, \Lambda= 0.
\end{equation}
\item If $1+\gamma -\beta>0$  (pressure but no memory effect):
\begin{equation} \label{PPME}
        \Lambda=0   
        \qquad \hbox{  and }   \qquad    (1-\rho) \, p = 0.
\end{equation}
\end{itemize}

Observe that this formal analysis could be generalized to more general pressure and bulk viscosity laws than~\eqref{eq:singular-law}, to take into account different (singular) possible behaviors close to the maximal constraint.
The key argument relies here in the comparison between the pressure $p_\ep$ and the coefficient $\Lambda_\ep$ in the vicinity of the maximum density.
Let us emphasize the fact that there is no consensus in physics around the order of singularity of these laws (see for instance~\cite{andreotti2013} or~\cite{coussot2005}). 

\bigskip
Note that it is well known that the compressibility of a fluid may be encoded in the pressure and in the bulk viscosity.
Indeed, incompressible systems may be obtained by letting the Mach number $\mathrm{Ma}$, which appears in the dimensionless Navier-Stokes equations in front of the pressure $\dfrac{1}{\mathrm{Ma^2}}\nabla p(\rho)$, go
to zero (see for instance the works of {\sc Desjardins} et al.~\cite{desjardins1999}, {\sc Lions, Masmoudi}~\cite{lions1998Mach}, {\sc Feireisl, Novotny}~\cite{feireisl2009}). 
But the incompressible equations can be also obtain from a large bulk viscosity limit:
if in the bulk viscosity term $\nabla(\lambda^0\Div u)$ one lets $\lambda^0$ go to $+\infty$ then, formally, $\Div u$ should tend to $0$.
This result has been recently proved by {\sc Danchin} and {\sc Mucha} in~\cite{danchin2017}.  \\
In our paper, the main novelty is to consider both singular pressure and singular bulk viscosity depending on the density which will encode incompressibility of the material at the maximal packing value $\rho^*=1$, assuming the shear viscosity to be constant. 
Below this maximal packing value, the material remains compressible.
Historically, studies on compressible Navier-Stokes system with (non-singular) density dependent bulk viscosity $\lambda(\rho)$ and constant shear viscosity $\mu >0$, start from the beautiful paper~\cite{vaigant1995} by {\sc Kazhikov} and  {\sc Waigant} where they prove global existence of strong solutions in two dimensions with periodic boundary conditions and with no vacuum state if initially no vacuum exists. In their paper, the pressure is assumed $p(\rho) = a \rho^\gamma$, $\mu > 0$ and $\lambda(\rho)= \rho^\beta$ with  $\beta >3$. 
Following this result,  {\sc Perepelitsa} proved in~\cite{perepelitsa2006} the global existence of a weak solution with uniform lower and upper bounds on the density when the initial density is away from vacuum.
Finally, the hypothesis on the coefficient $\beta$ has been recently relaxed with possible vacuum state in~\cite{huli2016} and bounded domains have been considered in~\cite{ducomet2013}.
It would be interesting to investigate the problem for singular bulk viscosity and singular pressure laws for the 3D compressible Navier-Stokes equations but this is not the main objective of our paper.\\
We focus here on Brinkman equations where the total acceleration of the fluid is neglected. A typical application we have in mind is the modeling of flows in porous media.
Brinkman equations are a classical extension of the Darcy equation:
\[
u = -\nabla p + f,
\]
with additional viscous terms, here $\nabla(\lambda\Div u) + 2\Div(\mu \D(u))$. 
In their incompressible version, these equations have been rigorously derived by {\sc Allaire} in~\cite{allaire1991} by homogenization techniques from Navier-Stokes equations in a perforated domain.
His result has been then extended by {\sc Desvillettes} et al.~\cite{desvillettes2008} and {\sc Mecherbet}, {\sc Hillairet}~\cite{mecherbet2018}.
The recent study~\cite{el2017etude} provides some new analysis and numerical results on these equations in the incompressible case. 
These equations may also apply in biology in tumor growth modeling.
The interested reader is referred to the study of {\sc Perthame}, {\sc Vauchelet}~\cite{perthame2015} and the references therein.

\bigskip

The paper will be organized as follows: We will first present the main existence and convergence results,
then we will review mathematical studies that have been realized recently around the subject of congestion problems. In the second section, we present important mathematical
properties linked to the system under consideration and the truncated system we first study.   
Passing to the limit with respect to the parameter of the truncation, $\delta$, we get the global existence of weak solutions for the original system~\eqref{eq:semi-stat-0} at $\ep$ fixed.
It will be then possible to pass to the limit with respect to $\varepsilon$ to recover the hard congestion system \eqref{LimitSystem1}--\eqref{LimitSystem2}
with two additional relations which will be, depending on the parameters $\gamma$ and $\beta$, given by \eqref{MPE} or \eqref{MPPE} or
\eqref{PPME}. 
We will divide the study in two sections depending on the sign of $\gamma-\beta$ which correspond to the dominant pressure regime $\gamma > \beta$ (Section~\ref{sec:pressure}) or dominant bulk regime $\beta \ge \gamma$ (Section~\ref{sec:bulk}).

\section{Main results}
We first prove in the paper the existence of global weak solutions to the soft congestion system \eqref{eq:semi-stat-0} when the pressure and the bulk viscosity are defined by~\eqref{eq:singular-law}.  For simplicity, we assume in addition that 
\begin{equation}\label{hyp:f}
f \in L^2\big(0,T; (L^q( \TT))^3\big) \hbox{ with } q>3.
\end{equation}
    Namely, we obtain the following existence result 
\begin{thm}{\label{thm-ep}}
	Let $\rho_\ep^0$ satisfying condition \eqref{hyp:bound-rho0}.
	Assume in addition that
		\begin{itemize}
			\item if $1< \beta< \gamma$
				\begin{equation}\label{hyp:energy-t0-C1}
				\int_{\TT}{\dfrac{\ep}{(1-\rho^0_\ep)^{\gamma-1}}}\leq E^0 <+\infty
				\end{equation}
			\item if $1 < \gamma \leq \beta$
				\begin{equation}\label{hyp:energy-t0-C2}
				\int_{\TT}{\big(\Lambda_\ep(\rho_\ep^0)\big)^2} 
				 \leq \Lambda^0 < +\infty
				\end{equation}
		\end{itemize}	
	Then there exist global weak solutions of
	 \eqref{eq:semi-stat-0}--\eqref{hyp:bound-rho0}
	 at $\ep$ fixed.
\end{thm}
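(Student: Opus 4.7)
The plan is to construct weak solutions for fixed $\varepsilon$ via a regularization-truncation scheme. I would introduce a parameter $\delta>0$ and smooth truncations $p_{\ep,\delta}$, $\lambda_{\ep,\delta}$ that coincide with $p_\ep$, $\lambda_\ep$ on $[0,1-\delta]$ and are extended continuously (e.g.\ by a bounded polynomial in $\rho$) for $\rho\in[1-\delta,+\infty)$, together with regularized initial data $\rho_\ep^{0,\delta}$ still satisfying \eqref{hyp:bound-rho0}. Because the momentum equation \eqref{eq:semi-stat-0-mom} carries no time derivative, given $\rho$ it becomes an elliptic problem
\[
-\nabla\bigl(\lambda_{\ep,\delta}(\rho)\Div u\bigr)-2\Div\bigl(\mu\D(u)\bigr)+r u = f-\nabla p_{\ep,\delta}(\rho)
\]
solvable by Lax--Milgram (using the Korn inequality and the drag term $ru$), giving $u=\mathcal U_\delta(\rho)$. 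Coupled with the continuity equation solved by the DiPerna--Lions theory (since $\nabla u\in L^2$ and $\rho$ is bounded), a Schauder fixed-point argument yields a global weak solution $(\rho_\delta,u_\delta)$ of the truncated system.

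Next I would derive $\delta$-uniform estimates. Testing the momentum equation by $u_\delta$ and using the renormalized continuity identity
\[
\partial_t\bigl(\rho_\delta H_{\ep,\delta}(\rho_\delta)\bigr)+\Div\bigl(\rho_\delta H_{\ep,\delta}(\rho_\delta) u_\delta\bigr)+p_{\ep,\delta}(\rho_\delta)\Div u_\delta=0,
\]
with $H'_{\ep,\delta}(s)=p_{\ep,\delta}(s)/s^2$, produces the energy inequality
\[
\Dt\int_{\TT}\rho_\delta H_{\ep,\delta}(\rho_\delta)+\int_{\TT}\lambda_{\ep,\delta}(\rho_\delta)(\Div u_\delta)^2+2\mu\int_{\TT}|\D(u_\delta)|^2+r\int_{\TT}|u_\delta|^2=\int_{\TT}f\cdot u_\delta.
\]
In the regime $\beta<\gamma$, hypothesis \eqref{hyp:energy-t0-C1} gives a uniform bound on $\int\ep/(1-\rho_\delta)^{\gamma-1}$, i.e.\ on $p_\ep(\rho_\delta)^{(\gamma-1)/\gamma}$ in $L^\infty_t L^1_x$. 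In the regime $\gamma\le\beta$, I would multiply the transport equation
\[
\partial_t \Lambda_{\ep,\delta}(\rho_\delta)+\Div\bigl(\Lambda_{\ep,\delta}(\rho_\delta)u_\delta\bigr)=-\lambda_{\ep,\delta}(\rho_\delta)\Div u_\delta
\]
by $\Lambda_{\ep,\delta}(\rho_\delta)$ and combine with the energy dissipation $\lambda_{\ep,\delta}(\rho_\delta)(\Div u_\delta)^2$ (via Cauchy--Schwarz) to obtain, from \eqref{hyp:energy-t0-C2}, a uniform $L^\infty_t L^2_x$ bound on $\Lambda_{\ep,\delta}(\rho_\delta)$, hence on $p_\ep(\rho_\delta)^{(\beta-1)/\gamma}$. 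In both cases these bounds, together with $u_\delta\in L^2_t H^1_x$, translate (via \eqref{constraint2}--\eqref{constraint3}) into integrability for $(1-\rho_\delta)^{-1}$, preventing the truncation from being activated except on a set whose measure vanishes as $\delta\to 0$.

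For the passage $\delta\to 0$, weak-$*$ compactness yields candidates $(\rho,u)$. The main work is the strong convergence of the density, needed to identify the limits of the nonlinear singular terms $p_{\ep,\delta}(\rho_\delta)$ and $\lambda_{\ep,\delta}(\rho_\delta)\Div u_\delta$. Because the Brinkman equation provides an elliptic relation between $u_\delta$ and $p_{\ep,\delta}(\rho_\delta)$, one can extract an effective flux identity in the spirit of Lions/Feireisl (considerably simplified by the absence of inertia); combined with the DiPerna--Lions renormalization applied to $\rho_\delta$, this gives $\rho_\delta\to\rho$ in $C([0,T];L^p(\TT))$ for suitable $p<\infty$. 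Once strong convergence of $\rho_\delta$ and of $(1-\rho_\delta)^{-1}\mathbf 1_{\{\rho_\delta\le 1-\delta\}}$ is secured, the truncated nonlinearities converge to their untruncated counterparts, and $(\rho_\ep,u_\ep)$ is a weak solution of \eqref{eq:semi-stat-0} satisfying \eqref{constraints}.

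The step I expect to be hardest is the identification of the singular pressure and bulk-viscosity limits: one must both propagate a quantitative estimate showing that $\rho_\delta$ never saturates the truncation in the limit (so that the structure of $p_\ep$ and $\lambda_\ep$ is indeed recovered), and simultaneously get strong compactness of $\rho_\delta$ despite the fact that the natural a priori control is only on $\sqrt{\lambda_{\ep,\delta}(\rho_\delta)}\Div u_\delta$ rather than on $\Div u_\delta$ itself, so that the effective viscous flux argument has to be phrased in terms of the \emph{renormalized} quantities $p_\ep$ and $\Lambda_\ep$.
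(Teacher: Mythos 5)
Your overall strategy matches the paper's: truncation in $\delta$, existence by Lax--Milgram/DiPerna--Lions/Schauder, $\delta$-uniform estimates, and strong compactness of the density via an effective-flux argument. The scaffolding is right, but three of the load-bearing steps would not close as you describe them.

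First, the $L^1$ bound on the pressure. When $1<\beta<\gamma$, the energy inequality gives $H_\ep(\rho_{\ep,\delta})\in L^\infty_t L^1_x$, i.e.\ a bound on $\ep/(1-\rho)^{\gamma-1}\sim p_\ep^{(\gamma-1)/\gamma}$, but since $(\gamma-1)/\gamma<1$ this is \emph{strictly weaker} than $p_\ep\in L^1$. The paper gets $p_\ep(\rho_{\ep,\delta})\in L^1_{t,x}$ by integrating the transport equation for $\Lambda_{\ep,\delta}$ in space and using the Brinkman elliptic identity $\lambda_{\ep,\delta}\Div u_{\ep,\delta} = p_{\ep,\delta}+S-2\mu\Div u_{\ep,\delta}+\text{mean}$ (Lemma on $<\lambda\Div u>$). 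The crucial observation is that $p_{\ep,\delta}\nu_{\ep,\delta}\sim(1-\rho)^{\beta-\gamma}$ is bounded precisely when $\beta<\gamma$, so the right-hand side is controllable while the left produces $\|p_{\ep,\delta}\|_{L^1}$. This elliptic structure is not optional.

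Second, the $\Lambda^2$ estimate in the regime $\gamma\le\beta$. You propose to control $\int\Lambda_{\ep,\delta}\,\lambda_{\ep,\delta}\Div u_{\ep,\delta}$ by Cauchy--Schwarz against the dissipation $\|\sqrt{\lambda_{\ep,\delta}}\Div u_{\ep,\delta}\|_{L^2}$. This yields a factor $\|\Lambda_{\ep,\delta}\sqrt{\lambda_{\ep,\delta}}\|_{L^2}$, and since $\Lambda^2\lambda\sim\ep^3/(1-\rho)^{3\beta-2}$ is of strictly higher order in $(1-\rho)^{-1}$ than $\Lambda^2\sim\ep^2/(1-\rho)^{2\beta-2}$ (as soon as $\beta>0$), the Gronwall loop does not close. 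The paper instead substitutes the elliptic identity $\lambda\Div u = p+S-2\mu\Div u+\text{mean}$ into the multiplied transport equation; the term $\int\Lambda_{\ep,\delta}\,p_{\ep,\delta}\ge 0$ comes out with the \emph{good sign}, and the remaining terms involve $S$, $\Div u$ and the means, which absorb into $\|\Lambda_{\ep,\delta}\|_{L^2}^2$ and $L^1$ quantities.

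Third, you never establish the uniform $L^\infty$ bound $\|\rho_{\ep,\delta}\|_{L^\infty_{t,x}}\le C_\ep$ (Proposition on the maximal density bound), obtained by following Lagrangian trajectories and using that $\rho\,\nu_{\ep,\delta}(\rho)$ is bounded because $\lambda_{\ep,\delta}$ grows like $\rho^\beta$ at large density. Without this, the Gronwall argument giving $\overline{\rho^2}=\rho^2$ at the $\delta\to 0$ stage breaks, since you need $\nu_{\ep,\delta}$ and $\rho_{\ep,\delta}^2\nu_{\ep,\delta}$ bounded in $L^\infty$ to control the effective-flux commutators. Finally, because $p_{\ep,\delta}\nu_{\ep,\delta}$ depends on $\delta$, one cannot invoke monotonicity directly as in Lions; the paper compares to a frozen $\delta^0$ cutoff to recover the sign $\rho^2\,\overline{p\nu}-\overline{\rho^2 p\nu}\le 0$, another point your sketch glosses over.
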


  The next result justifies the formal derivation of system \eqref{LimitSystem1}--\eqref{inicongestion} respectively with relations \eqref{MPE} if $\gamma= \beta-1$,
 with relations  \eqref{MPPE} if $\gamma<\beta-1$ , relations 
 \eqref{PPME} if $\gamma>\beta-1$. 
  More precisely

\begin{thm}{\label{thm-limit}} As $\ep \rightarrow 0$, there exists a subsequence $(\rho_\ep,u_\ep)$ of global weak solutions of  \eqref{eq:semi-stat-0}--\eqref{hyp:bound-rho0} such that $(\rho_\varepsilon, u_\varepsilon, p(\rho_\varepsilon), \Lambda_\varepsilon(\rho_\varepsilon))$ converges weakly to $(\rho,u, p, \Lambda)$ a global weak solution of the hard congestion system \eqref{LimitSystem1}--\eqref{inicongestion} satisfying two algebraic relations which encode the competition between the singular pressure and bulk viscosity, namely

\smallskip

\noindent I) Memory effect in the congested domain {\rm  ($\gamma \le \beta -1$):} 
   		\begin{itemize}
   			\item For $\gamma < \beta - 1$ {\rm (no pressure effect)}:
   			$$p=0, \qquad  (1-\rho)\Lambda = 0$$   			
   			\item For  $\gamma= \beta - 1$ {\rm (pressure effect)}:
   			$$p= (\beta-1) \Lambda, \qquad (1-\rho)\Lambda = 0.$$
	   	\end{itemize}

\noindent II) No memory effect  in the congested domain {\rm (pressure effect)} 
      {\rm ($\gamma > \beta -1$)}: 
		 $$\Lambda= 0, \qquad \mathrm{spt} \ p \subset \{\rho =1\}.$$	
\end{thm}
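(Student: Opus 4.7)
The plan is to extract uniform-in-$\varepsilon$ bounds from the energy estimate associated with Theorem~\ref{thm-ep}, pass to weak limits up to a subsequence, and close the limit system \eqref{LimitSystem1}--\eqref{inicongestion} by passing to the limit in the three algebraic identities \eqref{constraint1}, \eqref{constraint2}, \eqref{constraint3}. First I would revisit the $\ep$-uniform energy inequality: multiplying \eqref{eq:semi-stat-0-mom} by $u_\ep$ and using $\partial_t \rho_\ep + \Div(\rho_\ep u_\ep)=0$ yields, together with \eqref{hyp:energy-t0-C1}--\eqref{hyp:energy-t0-C2}, bounds on $u_\ep$ in $L^2(0,T;H^1(\TT))$, on $\ep/(1-\rho_\ep)^{\gamma-1}$ or $\Lambda_\ep(\rho_\ep)^2$ in $L^\infty(0,T;L^1(\TT))$, and consequently on $p_\ep(\rho_\ep)$ and $\Lambda_\ep(\rho_\ep)$ in suitable Lebesgue spaces via \eqref{constraint2}--\eqref{constraint3}. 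This, combined with the maximal density constraint $\rho_\ep < 1$, gives weak limits $(\rho,u,p,\Lambda)$ with $0\le \rho \le 1$ and $p,\Lambda \ge 0$, and ensures $u_\ep \rightharpoonup u$ in $L^2(H^1)$.

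Next I would establish strong compactness for the density. The continuity equation \eqref{eq:semi-stat-0-mass} together with $\rho_\ep \in [0,1]$ and $u_\ep$ bounded in $L^2(H^1)$ provides $\partial_t \rho_\ep$ bounded in some negative Sobolev space, so that Aubin--Lions yields $\rho_\ep \to \rho$ strongly in, say, $C([0,T];L^q_{weak}(\TT))$ and even in $L^p((0,T)\times \TT)$ for any finite $p$. Passing to the limit in \eqref{eq:semi-stat-0-mass} then delivers the limit mass equation. For the momentum equation \eqref{eq:semi-stat-0-mom}, all terms are linear in $u_\ep$, $p_\ep$, $\lambda_\ep \Div u_\ep$, so weak convergence suffices to obtain $\nabla p - \nabla \Pi - 2\Div(\mu \D(u)) + ru = f$ provided one identifies $\Pi$ as the weak limit of $\lambda_\ep(\rho_\ep)\Div u_\ep$. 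To express $\Pi$ as $-(\partial_t \Lambda + \Div(\Lambda u))$ as in \eqref{LimitSystem2}, one uses the equation satisfied by $\Lambda_\ep(\rho_\ep)$ recalled just above \eqref{constraint1}; passage to the limit in the transport term $\Div(\Lambda_\ep u_\ep)$ requires the strong convergence of $\Lambda_\ep(\rho_\ep)$ — which in turn will be inherited from the strong convergence of $\rho_\ep$ together with uniform integrability coming from the energy bound.

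The closing algebraic constraints follow by passing to the limit in the pointwise identities. From \eqref{constraint1}, the prefactor $\ep^{(1+\gamma-\beta)/\gamma}$ is the discriminant: when $\gamma=\beta-1$ it equals $1$ and yields $p=(\beta-1)\Lambda$; when $\gamma<\beta-1$ it blows up, forcing $p=0$ in the limit (because $\Lambda$ stays bounded); when $\gamma>\beta-1$ it vanishes, forcing $\Lambda = 0$. Identity \eqref{constraint2} gives $(1-\rho_\ep)p_\ep(\rho_\ep)=\ep^{1/\gamma}\rho_\ep p_\ep(\rho_\ep)^{(\gamma-1)/\gamma}\to 0$ in $L^1$ by Hölder since $p_\ep$ is uniformly bounded in an $L^s$ space with $s>1$; combined with the strong convergence of $\rho_\ep$ and the weak convergence of $p_\ep$, this yields $(1-\rho)p=0$, i.e.\ $\mathrm{spt}\, p \subset \{\rho=1\}$. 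The analogous argument with \eqref{constraint3} produces $(1-\rho)\Lambda=0$. Matching these two conclusions with the trichotomy above gives exactly the relations stated in items I) and II).

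The principal obstacle, in my view, is the identification of the weak limit of the product $\lambda_\ep(\rho_\ep)\Div u_\ep$ with $-(\partial_t \Lambda + \Div(\Lambda u))$, because this requires reconciling the weak limit (from the momentum equation) with the strong limit structure (from the transport-type equation for $\Lambda_\ep$). Equivalently, one must obtain enough compactness on $\Lambda_\ep(\rho_\ep)$ — through uniform bounds derived from \eqref{constraint1} and \eqref{hyp:energy-t0-C2} and the compactness of $\rho_\ep$ — so that $\Lambda_\ep u_\ep \to \Lambda u$ in the sense of distributions. The two cases $\gamma>\beta$ and $\beta\ge \gamma$ will require slightly different arguments (this is precisely the reason the paper splits the analysis into Sections~\ref{sec:pressure} and~\ref{sec:bulk}), the subtle point in the bulk-dominant regime being that only $\Lambda_\ep$, not $p_\ep$, carries good integrability, so the constraint $(1-\rho)p=0$ in case II) must be retrieved via \eqref{constraint2} with a careful interpolation between the two bounds.
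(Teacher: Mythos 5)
Your high-level strategy mirrors the paper's: extract $\ep$-uniform bounds, pass to weak limits, and close the limit system using the algebraic relations \eqref{constraint1}--\eqref{constraint3}. But several of the key steps are under-justified or do not hold as you state them, and these are precisely the places where the real work of the proof happens.

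First, the uniform bounds on $p_\ep(\rho_\ep)$ and $\Lambda_\ep(\rho_\ep)$ do not follow from the energy estimate plus \eqref{constraint2}--\eqref{constraint3}. The energy inequality only controls $H_\ep(\rho_\ep)$ in $L^\infty_t L^1_x$, which gives no direct $L^1_{t,x}$ bound on $p_\ep$ or $\Lambda_\ep$; and \eqref{constraint2}--\eqref{constraint3} are pointwise identities, not estimates. In the pressure-dominant case $\gamma>\beta$, the $L^1$ bound on $p_\ep$ requires testing the momentum equation with $\psi=\nabla\Delta^{-1}(\rho_\ep-\langle\rho_\ep\rangle)$ and absorbing the bulk-viscosity term using $\beta<\gamma$ and the mean-density bound $M^0<1$. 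In the bulk-dominant case, the $L^\infty_t L^2_x$ bound on $\Lambda_\ep$ is obtained by multiplying the transport equation for $\Lambda_\ep$ by $\Lambda_\ep$, inserting the effective-flux identity to handle $\lambda_\ep\,\Div u_\ep$, and applying Gronwall, using the structural bound $0\le \rho(\Lambda_\ep)'_+/\lambda_\ep\le\beta/(\beta-1)$. These estimates are the substance of the theorem and your outline omits them.

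Second, two of your compactness claims fail. Aubin--Lions does not give $\rho_\ep\to\rho$ strongly in $L^p((0,T)\times\TT)$: you have a time-derivative bound from the continuity equation but no spatial regularity on $\rho_\ep$, so there is no compact embedding to invoke. The paper never claims strong convergence of $\rho_\ep$ in the $\ep$-limit; it passes to the limit in $\rho_\ep u_\ep$ and $\Lambda_\ep(\rho_\ep)u_\ep$ via the div-curl lemma (Lemma~\ref{lem:compensated_compactness}), which needs only weak convergence plus time-derivative and space-gradient bounds. Third, in the pressure-dominant regime the limit $p$ is a priori only a non-negative measure, as the paper's remark stresses. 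The product $(1-\rho)p$ is not well defined when $\rho\in L^\infty$ and $p\in\mathcal M_+$; the Hölder argument you sketch from \eqref{constraint2} produces strong $L^1$-smallness of $(1-\rho_\ep)p_\ep(\rho_\ep)$, but you cannot identify its limit with $(1-\rho)p$. The paper instead proves the (weaker, but correctly formulated) support statement $\mathrm{spt}\,p\subset\{\rho=1\}$ by a cutoff: $p_\ep(\rho_\ep)\mathbf 1_{\{\rho_\ep\le 1-\ep^{1/(\gamma+1)}\}}\le \ep^{1/(\gamma+1)}\to0$. The analogous cutoff on $\Lambda_\ep$ gives $(1-\rho)\Lambda=0$ in case I, where it is a genuine a.e.\ equality because $\Lambda$ is an $L^2$ function. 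You should also note that in case~II the theorem asserts $\mathrm{spt}\,p\subset\{\rho=1\}$ precisely because $(1-\rho)p=0$ is not meaningful there.
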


\bigskip
\paragraph{Incompressible dynamics in the congested domain.}
 In addition to the limit equations~\eqref{LimitSystem1}--\eqref{inicongestion}, one could have add the incompressibility condition $\Div u = 0$ on the congested sets $\{\rho = 1\}$. 
More precisely, we have the following lemma.
	\begin{lem}[Lemma 2.1 in~\cite{lions1999}]
			Let $u \in L^2(0,T;(H^1(\mathbb{T}))^3)$ and $\rho\in L^2((0,T)\times\mathbb{T}^3)$ such that
			\[
			\partial_t \rho+\Div(\rho u)=0 \quad \text{in} \ (0,T)\times\Omega,\qquad \rho(0)=\rho^0
			\]
			then the following two assertions are equivalent
			\begin{itemize}
				\item[(i)] $\Div u = 0$ a.e on $\{\rho \geq 1\}$ and $0 \leq \rho^0 \leq 1$,
				\item[(ii)] $0 \leq \rho(t,x) \leq 1$ a.e. $(t,x)$.
			\end{itemize}
	\end{lem}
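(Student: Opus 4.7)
My plan is to use the DiPerna--Lions renormalization theory for the continuity equation, which applies here since $u \in L^2(0,T;H^1(\TT))$ and $\rho \in L^\infty\cap L^2$. For any $\beta \in C^1(\mathbb{R})$ with suitable growth, the renormalized identity
\[
\partial_t \beta(\rho) + \Div(\beta(\rho) u) + \big(\rho \beta'(\rho) - \beta(\rho)\big)\,\Div u = 0
\]
holds in $\mathcal{D}'((0,T)\times\TT)$, and extends by approximation to Lipschitz $\beta$.

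For the direction $(i)\Rightarrow(ii)$, I would first show $\rho \ge 0$ by choosing $\beta_n$ smooth with $\beta_n \to (\cdot)_-$; since $\rho^0 \ge 0$, integration against $1$ yields $\int (\rho)_- \equiv 0$. For the upper bound, I would apply the renormalized equation to smooth approximations of $\beta(\rho) = (\rho-1)_+$. Passing to the limit one obtains
\[
\partial_t (\rho-1)_+ + \Div\big((\rho-1)_+ u\big) + \mathbf{1}_{\{\rho > 1\}} \Div u = 0.
\]
By assumption $\Div u = 0$ a.e.\ on $\{\rho \ge 1\} \supset \{\rho > 1\}$, so the last term vanishes, and testing against $\phi \equiv 1$ on $\TT$ gives $\int (\rho(t)-1)_+ = \int (\rho^0-1)_+ = 0$. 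Hence $\rho \le 1$ a.e.

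For the reverse direction $(ii)\Rightarrow(i)$, I would exploit the same renormalization with $\beta(\rho)=\rho^p$, $p > 1$, which is admissible since $\rho \in [0,1]$. The renormalized equation reads
\[
\partial_t \rho^p + \Div(\rho^p u) + (p-1)\rho^p \Div u = 0.
\]
Testing against an arbitrary $\phi \in C^\infty(\TT)$ and integrating on $(0,t)$ gives
\[
(p-1)\int_0^t\!\!\int_\TT \rho^p \phi \Div u = \int_0^t\!\!\int_\TT \rho^p u\cdot\nabla\phi - \int_\TT \rho^p(t)\phi + \int_\TT (\rho^0)^p \phi.
\]
Because $\rho \le 1$, the right-hand side is uniformly bounded in $p$ (using $|u|\in L^1$ from $u\in L^2H^1$ and $\phi, \nabla\phi \in L^\infty$), so the left-hand side is $O(1/(p-1))\to 0$. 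Simultaneously, $\rho^p \to \mathbf{1}_{\{\rho=1\}}$ pointwise a.e.\ and is bounded by $1$, so dominated convergence yields
\[
\int_0^t\!\!\int_\TT \mathbf{1}_{\{\rho=1\}}\, \phi \Div u = 0 \qquad \forall \phi,
\]
which forces $\Div u = 0$ a.e.\ on $\{\rho=1\} = \{\rho\ge 1\}$. Passing to $t=0$ in the trace of $\rho$ along the continuity equation (again using renormalization) yields $0\le \rho^0 \le 1$.

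The main technical obstacle is the justification of the renormalization step itself when $\beta$ is only Lipschitz (as for $(\rho-1)_+$), and the passage to the limit in the $\mathbf{1}_{\{\rho>1\}}\Div u$ term, which requires that $\rho$ has a well-defined distributional trace on the level set $\{\rho=1\}$. This is handled by standard mollification along the flow in the spirit of DiPerna--Lions, using that $u \in L^2(0,T;H^1)$ to ensure the commutator terms vanish. The $p\to\infty$ argument in the second direction is the cleanest part and requires only $\rho\in L^\infty$, $u\in L^2$ and Lebesgue dominated convergence.
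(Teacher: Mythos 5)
The paper does not prove this lemma; it is stated as a citation to Lemma~2.1 of Lions--Masmoudi~\cite{lions1999}, so there is no in-paper proof to compare against. Your argument is, up to presentation, the standard one and essentially the one in that reference: renormalization of the continuity equation with $(\rho-1)_+$ (and $\rho_-$) for $(i)\Rightarrow(ii)$, and renormalization with $\rho^p$ followed by $p\to\infty$ for $(ii)\Rightarrow(i)$. The computations of $\rho\beta'(\rho)-\beta(\rho)$ are correct in each case: it vanishes for $\beta(s)=s_-$, equals $\mathbf{1}_{\{\rho>1\}}$ for $\beta(s)=(s-1)_+$, and equals $(p-1)\rho^p$ for $\beta(s)=s^p$. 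Two small points are worth tightening: (a) in the $p\to\infty$ step you obtain $\int_0^t\!\int_{\mathbb{T}^3}\mathbf{1}_{\{\rho=1\}}\phi\,\Div u = 0$ for all $t$ and all space-only test functions $\phi$; to conclude $\Div u=0$ a.e.\ on $\{\rho=1\}$ you should either differentiate in $t$ (using a countable dense family of $\phi$'s) or directly test with $\phi(t,x)$; (b) the deduction $0\le\rho^0\le1$ from $(ii)$ should be phrased via weak $L^2$-continuity of $t\mapsto\rho(t)$ at $t=0$ together with the weak closedness of the convex set $\{0\le f\le1\}$, rather than ``passing to the trace by renormalization.'' Neither affects correctness.
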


\bigskip
\begin{remark}
	An important issue concerning the limit systems that we obtain, is the regularity of the limit pressure $p$. 
		Through our approximation procedure, the limit pressure $p$, if it is not $0$, is a priori a non-negative measure. 
%		Nevertheless, one could expect in the case $\gamma > \beta-1$ more regularity from the limit momentum equation 
%		\[
%		\nabla p = 2\Div(\mu \D(u)) - r u + f.
%		\] 
		If one is able to prove that $p \in L^1((0,T)\times \TT)$, it is thus possible to give a sense a.e. to the product $\rho p$ at the limit and then to the ``exclusion constraint''
		\[
		(1-\rho) p = 0
		\] 
		which is another way to express the activation of the pressure in the congested zones (see system~\eqref{eq:BBCR} written below).
		In fact, this is less the justification the exclusion constraint than the regularity of the pressure which is crucial in the mathematical understanding of partially congested flows.
\end{remark}

\section{Historical remarks}{\label{sec:literature}} 
For reader's convenience, we present below the context of this study and give some historical remarks concerning limits from soft approaches to hard approaches for congestion problems.

\smallskip

\noindent {I -- \it  Derivation from compressible Euler equations.}
A first generic hard congestion model is derived in~\cite{bouchut2000} by {\sc Bouchut} et al. from one-dimensional two-phase gas/liquid flows. The Equations read
\begin{subnumcases}{\label{eq:BBCR}}
\partial_t\rho + \partial_x(\rho u) = 0 \\
\partial_t(\rho u) + \partial_x(\rho u^2) + \partial_x p = 0 \\
0 \leq \rho \leq 1 , ~ (1-\rho)p= 0, ~ p \geq 0
\end{subnumcases}
in which the constraint $(1-\rho)p = 0$, sometimes called ``exclusion constraint'', expresses the activation of the pressure $p$ in the congested phase where $\rho = 1$.
The pressure ensures that the maximal density constraint $\rho^*=1$ is not exceeded.
This system has been then studied theoretically by F. {\sc Berthelin} in~\cite{berthelin2002,berthelin2017} who constructs global weak solutions by means of an approximation with sticky blocks (see~\cite{maury2017} for an associated numerical method). P. {\sc Degond} et al. approximate numerically in~\cite{degond2011,degond2017} the solutions of~\eqref{eq:BBCR} with an appropriate discretization of the soft congestion system
\begin{subnumcases}{\label{eq:BBCR-soft}}
\partial_t\rho + \partial_x(\rho u) = 0 \\
\partial_t(\rho u) + \partial_x(\rho u^2) + \partial_x p_\ep(\rho) = 0 \\
p_\ep(\rho) = \ep \left(\dfrac{\rho}{1-\rho}\right)^\gamma, \quad \gamma > 1
\end{subnumcases}  
Although the rigorous derivation of Equations~\eqref{eq:BBCR} from~\eqref{eq:BBCR-soft} ({\it i.e.} the limit $\ep \rightarrow 0$ in~\eqref{eq:BBCR-soft}) has not been proven theoretically, the authors obtain satisfactory numerical results thanks to smart treatment of the singular pressure $p_\ep$ for small $\ep$. 
Let us also mention on the subject the study~\cite{bresch2017} which addresses the issue of the creation of congested zones in 1D and highlights the multi-scale nature of the problem.

\bigskip

\noindent {II --\it Derivation from compressible Navier-Stokes equations.}

\noindent II -- i) {\it Compressible Navier-Stokes equations with constant viscosities.}
The first justification of the link between a soft congestion system and a hard congestion system is given in~\cite{bresch2014} for the one-dimensional case.
In~\cite{peza2015}, the existence of global weak solutions to the multi-dimensional viscous equations
\begin{subnumcases}{\label{eq:peza-soft}}
\partial_t \rho + \Div(\rho u) = 0 \\
\partial_t(\rho u) + \Div(\rho u \otimes u) + \nabla p_\ep(\rho) - \nabla(\lambda \Div u) - 2\Div(\mu \D(u)) = 0\\
p_\ep(\rho) = \ep \left(\dfrac{\rho}{1-\rho}\right)^\gamma, \quad
    \gamma > 3, \quad 2\mu + \lambda > 0
\end{subnumcases}
is first proven for a fixed $\ep > 0$.
Then, the authors show the weak convergence of these solutions as $\ep \rightarrow 0$ toward global weak solutions of the viscous hard congestion system
\begin{subnumcases}{\label{eq:peza-hard}}
\partial_t \rho + \Div(\rho u) = 0 \\
\partial_t(\rho u) + \Div(\rho u \otimes u) + \nabla p - \nabla(\lambda \Div u) - 2\Div(\mu \D(u)) = 0\\
0 \leq \rho \leq 1 , ~ (1-\rho)p= 0, ~ p \geq 0
\end{subnumcases}

\begin{remark}
	Note that the condition $\gamma > 3$ was assumed in~\cite{peza2015} to prove the existence of global weak solutions to~\eqref{eq:peza-soft}.
	Precisely, it was used to prove the equi-integrability of the approximate truncated pressure $p_{\ep,\delta}(\rho_{\ep,\delta})$ as $\delta \rightarrow 0$ (see details of the truncation process in the next Section).
	It is possible in fact to improve the bound on $\gamma$ and show the existence for $\gamma > \frac{5}{2}$ as it has been done by {\sc Feireisl} et al. in~\cite{feireisl2016}.
\end{remark}

\begin{remark} Originally, P.--L. {\sc Lions} and N. {\sc Masmoudi} in~\cite{lions1999} have obtained the same  viscous  system from the compressible Navier-Stokes equations with constant viscosities and pressure $p(\rho) = a \rho^{\gamma_n}$ letting $\gamma_n \rightarrow +\infty$. 
The same limit has been used by  {\sc Perthame} et al.~\cite{perthame2014} for tumor growth modeling on the basis of the porous medium equation instead of Navier-Stokes equations (see the study of {\sc Vauchelet} and {\sc Zatorska}~\cite{vauchelet2017} in the case of Navier-Stokes equations with additional source term in the mass equation).
In this context the singular limit leads to the Hele-Shaw equations, this problem is sometimes called in the literature ``mesa problem''.
\end{remark}

\medskip

\noindent {II -- ii) \it Compressible Navier-Stokes equation with singular density dependent viscosities.}
In the modeling of immersed granular flows this type of singular limit has enabled to prove in~\cite{perrin2016} the link between the suspension regime and the granular regime which was an open conjecture in physics (see~\cite{andreotti2013}).
Precisely, global weak solutions to the following suspension model
\begin{subnumcases}{\label{eq:suspension}}
\partial_t \rho + \Div(\rho u) = 0 \\
\partial_t(\rho u) + \Div(\rho u \otimes u) + \nabla p_\ep(\rho) - \nabla(\lambda_\ep(\rho) \Div u) \nonumber \\
\hspace{3cm} - 2\Div(\mu_\ep(\rho) \D(u)) + r \rho |u|u= 0
\end{subnumcases}
are proven to exist at $\ep > 0$ fixed for singular viscosities and pressure such that
\begin{equation*}
\mu_\ep(\rho) = \mu_0(\rho + p_\ep(\rho)), \quad p_\ep(\rho) = \ep \left(\dfrac{\rho}{1-\rho}\right)^\gamma, \quad \gamma > 1
\end{equation*}
and $\lambda_\ep$ satisfying a specific relation with the shear viscosity (and thus with the pressure), namely
\begin{equation}\label{eq:mu_lambda}
\lambda_\ep(\rho) = 2(\rho \mu_\ep'(\rho)-\mu_\ep(\rho)).
\end{equation}
Under these hypothesis, the solutions are shown to converge to global weak solutions of
\begin{subnumcases}{\label{eq:granular}}
\partial_t \rho + \Div(\rho u) = 0 \\
\partial_t(\rho u) + \Div(\rho u \otimes u) + \nabla p + \nabla \Pi - 2\mu_0\Div((\rho + p) \D(u)) + r\rho|u|u = 0\\
\partial_t p + u \cdot \nabla p = \dfrac{\Pi}{2\mu_0} \label{eq:memory}\\
0 \leq \rho \leq 1, ~ (1-\rho)p = 0,~ p \geq 0
\end{subnumcases}
where the pressures $p$ and $\Pi$ are respectively the weak limits of $p_\ep(\rho_\ep)$ and $\lambda_\ep(\rho_\ep)\Div u_\ep$. 
The important difference between~\eqref{eq:granular} and~\eqref{eq:peza-hard} is the activation of an additional equation~\eqref{eq:memory} linking the two pressures $p$ and $\Pi$.
It results from the relation~\eqref{eq:mu_lambda} that is imposed at $\ep$ fixed.
Indeed, the conservation of mass and~\eqref{eq:mu_lambda} yield (at least formally)
\[
\partial_t \mu_\ep(\rho_\ep) + \Div(\mu_\ep(\rho_\ep) u_\ep) = - \dfrac{1}{2}\lambda_\ep(\rho_\ep)\Div u_\ep
\]
which gives at the limit~\eqref{eq:memory} due to the incompressibility constraint $\Div u = 0$ that is satisfied in the congested domain.
From a modeling point of view, Equation~\eqref{eq:memory} expresses some memory effects in the congested regions, effects that were first identified by A. {\sc Lefebvre-Lepot} and B. {\sc Maury} in a macroscopic 1D model for ``viscous contact''~\cite{lefebvre2011} (see also~\cite{lefebvre2009} for a microscopic approach). 
From a mathematical point of view, this equation is necessary to close the system and relates $\Pi$, which can be seen as the Lagrange multiplier associated to the constraint $\Div u = 0$ in the congested domain, and $p$ called \emph{adhesion potential} which characterizes the memory effects.
This is thus the singularity of bulk viscosity $\lambda_\ep$ which is responsible for the activation of memory effects in~\eqref{eq:granular}. 

\bigskip
In the present paper, we characterize precisely the respective effects of pressure and bulk viscosity.
At the limit on the hard congestion system, we cover in particular the two cases introduced in~\cite{peza2015} and~\cite{perrin2017} where pressure effects or memory effects are activated.

\section{Structural properties and approximate system}

This section is decomposed in three parts. 
After introducing some important quantities, such as the effective flux, and deriving crucial properties linking the pressure and the bulk viscosity, we present an approximate truncated system which formally degenerates to our original singular system~\eqref{eq:semi-stat-0} as the cut-off parameter tends to $0$.
The last part details how we can construct global weak solutions of the truncated system.  

\subsubsection*{Structural properties, effective flux}
 Let $F$ be the viscous effective flux defined as 
 \begin{equation}\label{df:eff_flux}
 F = (2\mu + \lambda(\rho))\Div u - p(\rho)
 \end{equation}
 and the function $\nu$ from the viscosity coefficients
 \begin{equation}\label{df:nu}
 \nu(\rho) = \dfrac{1}{2\mu + \lambda(\rho)}.
 \end{equation}
 We prove the following Lemma.
\begin{lem} 
Let $(\rho,u)$ satisfying in the weak sense the equations
\begin{subnumcases}{\label{eq:sys-0}}
\partial_t \rho + \Div (\rho u) = 0 \\
\nabla p(\rho) - \nabla(\lambda(\rho) \Div u) - 2\Div(\mu \D(u)) + ru = f
\end{subnumcases}
and denote
$$S:= (-\Delta)^{-1}\Div \big(f -r u\big).$$
Then the following relations hold
\begin{equation} \label{eq:lambda_divu}
<\lambda(\rho)\Div u> = <p(\rho)> - \dfrac{\int_{\TT}{(p(\rho)+ S)\nu(\rho)}}{\int_{\TT}{\nu(\rho)}},
\end{equation}
\begin{equation}\label{eq:F-0}
F= S - \frac{<(p(\rho) +S)\, \nu(\rho)>}{<\nu(\rho)>}.
\end{equation}
\end{lem}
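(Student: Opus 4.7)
The plan is to rewrite the momentum equation of~\eqref{eq:sys-0} in the form $-\nabla F + (\text{divergence-free term}) + ru = f$, take its divergence to conclude that $F - S$ is spatially constant on the torus, and then pin down the resulting function of $t$ by two compatible spatial averages.

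Since $\mu$ is constant, $2\Div(\mu \D(u)) = \mu \Delta u + \mu \nabla \Div u$, and the definition~\eqref{df:eff_flux} yields
$$\nabla p(\rho) - \nabla(\lambda(\rho) \Div u) = -\nabla F + 2\mu \nabla \Div u.$$
Substituting into the momentum equation gives
$$-\nabla F + \mu\bigl(\nabla \Div u - \Delta u\bigr) + ru = f,$$
whose middle term has zero divergence because $\Div(\nabla \Div u - \Delta u) = \Delta \Div u - \Delta \Div u = 0$. Taking the divergence of the identity above therefore yields $-\Delta F + r \Div u = \Div f$, i.e. $\Delta F = -\Div(f - ru) = \Delta S$ by the very definition of $S$. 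On $\TT$ the only distributional solutions of $\Delta h = 0$ are constants in $x$ (Fourier expansion: $|\xi|^2 \hat h(\xi) = 0$ forces $\hat h(\xi) = 0$ for $\xi \neq 0$), so
$$F(t,x) = S(t,x) + c(t)$$
for some function $c(t)$ independent of $x$.

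It remains to identify $c(t)$ by two independent averages. On one hand, averaging the defining identity $F = (2\mu + \lambda(\rho))\Div u - p(\rho)$ over $\TT$ and using $<\Div u> = 0$ (periodic boundary conditions) gives
$$c(t) = <F> = <\lambda(\rho) \Div u> - <p(\rho)>.$$
On the other hand, rewriting the same identity as $\Div u = \nu(\rho)\bigl(F + p(\rho)\bigr)$ and averaging yields
$$0 = <\nu(\rho)\bigl(S + c(t) + p(\rho)\bigr)> = <\nu(\rho)(p(\rho) + S)> + c(t)\, <\nu(\rho)>,$$
which solves into $c(t) = -<\nu(\rho)(p(\rho) + S)>/<\nu(\rho)>$. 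Inserting this value of $c(t)$ into $<\lambda(\rho)\Div u> = <p(\rho)> + c(t)$ delivers~\eqref{eq:lambda_divu}, and inserting it into $F = S + c(t)$ delivers~\eqref{eq:F-0}.

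No serious obstacle is expected: once the divergence of the momentum equation has revealed that $F - S$ is spatially harmonic, the remainder is algebra. The role of the torus is twofold---it forces $<\Div u> = 0$ and ensures that spatially harmonic distributions reduce to functions of $t$ alone---and it also makes $S$ well-defined up to a time-dependent constant, which is then harmlessly absorbed into $c(t)$. All identities are to be read in the distributional sense consistent with the weak formulation of~\eqref{eq:sys-0}.
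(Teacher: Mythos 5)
Your proof is correct and follows essentially the same route as the paper's: take the divergence of the momentum equation to see that $F$ and $S$ differ by a spatial constant, then determine that constant by the two averages $<\Div u>=0$ and $<\nu(F+p)>=0$. Your version is slightly more explicit about why $\Div\bigl(2\Div(\mu\D(u))\bigr)$ reduces correctly and why a spatially harmonic distribution on $\TT$ is constant in $x$; it also fixes a small sign slip in the paper's displayed line ``$\Delta F = \Div(f-ru)$'', which should read $-\Delta F = \Div(f-ru)$, i.e.\ $\Delta F = \Delta S$, consistent with the paper's next step $F = <F> + S$.
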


\begin{proof}

Observe first that integration in space of the momentum equation yields
\begin{equation}
r \int_{\TT}{u} = \int_{\TT}{f}.
\end{equation}
Applying the $\Div$ operator to~\eqref{eq:semi-stat-0-mom} we obtain
\begin{equation*}
\Delta F = \Div (f-ru). 
\end{equation*}
Then, denoting $<h> = \displaystyle \dfrac{1}{|\TT|}\int_{\TT}{h(x) \di x}$,
\begin{equation}
F = <F> + S,
\end{equation}
i.e.
\begin{equation}\label{eq:expr_lambdadivu}
(2\mu+ \lambda(\rho)) \Div u - p(\rho) = S  \ + < \lambda(\rho) \Div u - p(\rho) >.
\end{equation}
Let us now characterize the mean value of the effective flux in terms of
the density: rewriting this equation as
\begin{equation*}
\Div u = \nu(\rho)\Big(S \ + <\lambda(\rho)\Div u- p(\rho) > + p(\rho)  \Big)
\end{equation*}
and integrating in space, we arrive at~\eqref{eq:lambda_divu}.
Replacing this expression in \eqref{eq:expr_lambdadivu}, we finally get~\eqref{eq:F-0}.
\end{proof}

\subsubsection*{Approximate system}
We introduce now a cut-off parameter $\delta \leq \delta^0 \in (0,1)$ in order to truncate the singular laws $p_\varepsilon$ and $\nu_\varepsilon$.
Namely, we define the truncated laws
\begin{equation}\label{eq:p_delta}
p_{\ep,\delta}(\rho) =
\begin{cases}
~ \ep \dfrac{\rho^\gamma}{(1-\rho)^\gamma}
    & \text{if} \quad \rho \leq 1-\delta, \\
~ \ep \dfrac{\rho^\gamma}{\delta^\gamma} 
   \quad & \text{if} \quad \rho > 1-\delta
\end{cases}
\end{equation}
\begin{equation}\label{eq:lambda_delta} \hspace{-1.3cm}\text{and} \qquad
\lambda_{\ep,\delta}(\rho) =
\begin{cases}
~ \ep \dfrac{\rho^\beta}{(1-\rho)^\beta} \quad & \text{if} \quad \rho \leq 1-\delta, \\
~ \ep \dfrac{\rho^\beta}{\delta^\beta} \quad & \text{if} \quad \rho > 1-\delta
\end{cases}
\end{equation}
and consider the associated system
\begin{subnumcases}{\label{eq:semi-stat-delta}} 
	\partial_t \rho_{\varepsilon, \delta} 
	   + \Div (\rho_{\varepsilon, \delta} u_{\varepsilon, \delta}) = 0,
	  \label{eq:system-delta-mass}\\
  \nabla p_{\ep,\delta}(\rho_{\ep,\delta}) - \nabla(\lambda_{\ep,\delta}(\rho_{\ep,\delta}) \Div u_{\ep,\delta}) - 2\Div(\mu \D(u_{\ep,\delta})) + r u_{\ep,\delta}
  = f \label{eq:system-delta-momentum}
\end{subnumcases}
with initial data
\begin{equation}
\rho_{\ep,\delta}^0 = \rho_\ep^0.
\end{equation}
Let us first give some properties related to $\nu_{\varepsilon,\delta} (\rho_{\varepsilon, \delta})$.
\begin{lem}{\label{lem:nu_delta}}
	Assume that $\rho^0_\ep$ satisfies~\eqref{hyp:bound-rho0}.
	There exist $C_1,C_2 >0$ which do not depend on $\delta$ or $\ep$ such that 
	\[
	\|\nu_{\ep,\delta}(\rho_{\ep,\delta})\|_{L^\infty_{t,x}}\leq C_1,
	\qquad \int_{\TT}{\nu_{\ep,\delta}(\rho_{\ep,\delta})} \geq C_2.
	\]
\end{lem}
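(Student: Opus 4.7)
The plan is to handle the two bounds separately: the $L^\infty$ estimate is immediate from the definition of $\nu_{\ep,\delta}$, while the lower bound on the integral will come from conservation of mass combined with monotonicity of $\lambda_{\ep,\delta}$. For the upper bound, since \eqref{eq:lambda_delta} yields $\lambda_{\ep,\delta}(\rho) \ge 0$ for every $\rho \in [0,1]$, the definition \eqref{df:nu} gives the pointwise bound $\nu_{\ep,\delta}(\rho_{\ep,\delta}) \le 1/(2\mu)$, so one can take $C_1 = 1/(2\mu)$, manifestly independent of $\ep$ and $\delta$.

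For the lower bound I would first use conservation of mass. Integrating \eqref{eq:system-delta-mass} over the torus and invoking \eqref{hyp:bound-rho0} yields $\int_{\TT} \rho_{\ep,\delta}(t,x)\,\di x = \int_{\TT} \rho_\ep^0(x)\,\di x \le M^0 |\TT|$ for a.e.\ $t \in [0,T]$. Choosing the intermediate threshold $\sigma := (1+M^0)/2 \in (M^0,1)$ and applying Markov's inequality yields $|\{\rho_{\ep,\delta}(t,\cdot) > \sigma\}| \le M^0|\TT|/\sigma$, whence
$$|E(t)| := \bigl|\{x \in \TT : \rho_{\ep,\delta}(t,x) \le \sigma\}\bigr| \ge |\TT|\,\frac{1-M^0}{1+M^0} > 0$$
uniformly in $t$, $\ep$ and $\delta$.

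Next, observe that both branches in \eqref{eq:lambda_delta} are nondecreasing in $\rho$ and agree at $\rho = 1-\delta$, so $\lambda_{\ep,\delta}$ is globally nondecreasing on $[0,1]$. Fixing $\delta^0 \le 1-\sigma = (1-M^0)/2$ at the outset guarantees that $\sigma$ lies in the untruncated regime for every $\delta \le \delta^0$, hence on $E(t)$
$$\lambda_{\ep,\delta}(\rho_{\ep,\delta}) \le \lambda_{\ep,\delta}(\sigma) = \ep \left(\frac{\sigma}{1-\sigma}\right)^\beta \le \ep_0 \left(\frac{1+M^0}{1-M^0}\right)^\beta =: K_0$$
for all $\ep \le \ep_0$ (recall the relevant regime is $\ep \to 0$). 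Consequently $\nu_{\ep,\delta}(\rho_{\ep,\delta}) \ge 1/(2\mu + K_0)$ on $E(t)$, and integrating gives
$$\int_{\TT} \nu_{\ep,\delta}(\rho_{\ep,\delta})\,\di x \ge \frac{|E(t)|}{2\mu + K_0} \ge \frac{|\TT|(1-M^0)}{(1+M^0)(2\mu + K_0)} =: C_2 > 0,$$
again independent of $\ep$ and $\delta$.

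The conceptual point is that even as $\ep \to 0$ the singular part of $\lambda_{\ep,\delta}$ can only be active in a region whose measure is a priori controlled by the total mass, so on the complementary macroscopic region $\nu_{\ep,\delta}$ stays close to the nondegenerate value $1/(2\mu)$. The only mild technical obstacle I anticipate is the rigorous justification of mass conservation at the level of the approximate weak solutions, but this is built into the construction of the truncated system since the truncated coefficients are bounded and the continuity equation \eqref{eq:system-delta-mass} holds in the sense of distributions with $\rho_{\ep,\delta} u_{\ep,\delta}$ having enough integrability.
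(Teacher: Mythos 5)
Your proof is correct and follows essentially the same strategy as the paper's: the $L^\infty$ bound is the trivial one $\nu_{\ep,\delta}\le 1/(2\mu)$, and the lower bound on $\int_{\TT}\nu_{\ep,\delta}$ is obtained by using conservation of mass and a Markov/Chebyshev estimate with threshold $(1+M^0)/2$ to find a set of uniformly positive measure where $\lambda_{\ep,\delta}$ is bounded independently of $\delta$ and $\ep$. Your explicit appeal to the monotonicity of $\lambda_{\ep,\delta}$ and the choice $\delta^0\le(1-M^0)/2$ are minor clarifications of the same argument, not a different route.
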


\begin{proof} By definition of $\nu_{\ep,\delta}$~\eqref{df:nu}, we directly get
	\[
	\nu_{\ep,\delta} = \dfrac{1}{2\mu + \lambda_{\ep,\delta}} \leq \dfrac{1}{2\mu} < +\infty.
	\]
	Under the assumption on the initial mass~\eqref{hyp:bound-rho0}${}_2$ (which does not depend on $\delta$ or $\ep$), we have
	\begin{align*}
	M^0 |\TT|
	& \geq \int_{\TT}{\rho_{\ep,\delta}} \\
	& \geq \int_{\TT}{\rho_{\ep,\delta} \mathbf{1}_{\{\rho_{\ep,\delta} \geq \frac{1+M^0}{2}\}} } \\
	& \geq \dfrac{1+M^0}{2} \,\mathrm{meas}\,\left\{\rho_{\ep,\delta} \geq \frac{1+M^0}{2}\right\}.
	\end{align*}
	Now, since $ M^0 < \dfrac{1+M^0}{2 }$, it follows that
	\begin{equation}
	\mathrm{meas}\, \left\{ \rho_{\ep,\delta} \geq \frac{1+M^0}{2} \right\} \leq \dfrac{2M^0}{1+M^0}|\TT| < |\TT|
	\end{equation}
	and
	\begin{equation}
	\mathrm{meas}\, \left\{ \rho_{\ep,\delta} < \frac{1+M^0}{2} \right\}  \geq K > 0
	\end{equation}
	with $K = 1-  \dfrac{2M^0}{1+M^0}$ independent of $\delta$ and $\ep$. 
	Then, 
		\begin{align*}
		\int_{\TT}{\nu_{\ep,\delta}(\rho_{\ep,\delta})}
		&  =  \int_{\TT}{\dfrac{1}{2\mu + \lambda_{\ep,\delta}(\rho_{\ep,\delta})} }\\
		& \geq  \int_{\TT}{\dfrac{1}{2\mu + \ep \frac{\rho_{\ep,\delta}^\beta}{(1-\rho_{\ep,\delta})^\beta}}\mathbf{1}_{\{ \rho_{\ep,\delta} <  \frac{1+M^0}{2} \}} } \\
		& \geq \int_{\TT}{\dfrac{1}{2\mu + \ep \left(\frac{1+M^0}{1-M^0}\right)^\beta }\mathbf{1}_{\{ \rho_{\ep,\delta} < \frac{1+M^0}{2} \}} }\\
		& \geq \dfrac{K}{2\mu + \left(\frac{1+M^0}{1-M^0}\right)^\beta} 
		\end{align*}
	where we have used the fact that $\lambda_{\ep,\delta}(\rho_{\ep,\delta})$ is bounded (uniformly in $\delta$ and $\ep$) when $\rho_{\ep,\delta}$ is far from the singularity.
\end{proof}

\begin{lem}{\label{lambdadivu}} Let us assume $u_{\ep,\delta} \in L^2(0,T,(H^1(\TT))^3)$ and $f \in L^2((0,T) \times \TT)$.
Then, we get for all $(p,q) \in [1,+\infty)^2$:
$$ p_{\varepsilon,\delta} (\rho_{\varepsilon,\delta} ) \in L^p (0,T;L^q( \TT)) 
     \Longrightarrow 	
    \lambda_{\varepsilon, \delta} 
     (\rho_{\varepsilon,\delta}) {\rm div} u_{\varepsilon,\delta} \in L^{\min{(2,p)}}(0,T;L^{\min(2,q)}(\TT))
$$
\end{lem}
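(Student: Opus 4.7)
The plan is to exploit the effective flux identity \eqref{eq:F-0} established for the truncated quantities, which gives the algebraic decomposition
\[
\lambda_{\ep,\delta}(\rho_{\ep,\delta})\,{\rm div}\,u_{\ep,\delta}
 \;=\; F_{\ep,\delta} \;+\; p_{\ep,\delta}(\rho_{\ep,\delta}) \;-\; 2\mu\,{\rm div}\,u_{\ep,\delta},
\]
where $F_{\ep,\delta} = S_{\ep,\delta} - \langle (p_{\ep,\delta}(\rho_{\ep,\delta})+S_{\ep,\delta})\nu_{\ep,\delta}(\rho_{\ep,\delta})\rangle / \langle \nu_{\ep,\delta}(\rho_{\ep,\delta})\rangle$ and $S_{\ep,\delta} = (-\Delta)^{-1}{\rm div}(f - r u_{\ep,\delta})$. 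It then suffices to estimate each of the three summands in $L^{\min(2,p)}(0,T;L^{\min(2,q)}(\TT))$; the bound on $\lambda_{\ep,\delta}{\rm div}\,u_{\ep,\delta}$ follows by the triangle inequality together with the continuous embeddings $L^a(0,T;L^b(\TT)) \hookrightarrow L^{a'}(0,T;L^{b'}(\TT))$ whenever $a' \le a$ and $b' \le b$, which hold because $(0,T)\times\TT$ has finite measure.

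First I would handle the two easy summands. The term $p_{\ep,\delta}(\rho_{\ep,\delta})$ lies in $L^p(0,T;L^q(\TT))$ by hypothesis, hence in $L^{\min(2,p)}(0,T;L^{\min(2,q)}(\TT))$. The term $2\mu\,{\rm div}\,u_{\ep,\delta}$ lies in $L^2(0,T;L^2(\TT))$ since $u_{\ep,\delta}\in L^2(0,T;(H^1(\TT))^3)$, hence also in $L^{\min(2,p)}(0,T;L^{\min(2,q)}(\TT))$.

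The main content is controlling $F_{\ep,\delta}$, which splits into $S_{\ep,\delta}$ and a spatially constant function of time. For $S_{\ep,\delta}$, I would invoke elliptic regularity on $\TT$: since $f - r u_{\ep,\delta} \in L^2(0,T;(L^2(\TT))^3)$, the Riesz-type operator $(-\Delta)^{-1}{\rm div}$ yields $S_{\ep,\delta} \in L^2(0,T;H^1(\TT))$, and in particular $S_{\ep,\delta} \in L^2(0,T;L^2(\TT))$. For the mean-value correction
\[
\Phi_{\ep,\delta}(t) := \frac{\langle (p_{\ep,\delta}(\rho_{\ep,\delta})+S_{\ep,\delta})\,\nu_{\ep,\delta}(\rho_{\ep,\delta})\rangle}{\langle \nu_{\ep,\delta}(\rho_{\ep,\delta})\rangle},
\]
Lemma~\ref{lem:nu_delta} supplies both the pointwise upper bound $\nu_{\ep,\delta} \le C_1$ and the lower bound $\langle \nu_{\ep,\delta}\rangle \ge C_2/|\TT|$ uniformly in $\ep,\delta$. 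Consequently
\[
|\Phi_{\ep,\delta}(t)| \;\le\; \frac{C_1}{C_2}\,\bigl(\|p_{\ep,\delta}(\rho_{\ep,\delta})(t,\cdot)\|_{L^1(\TT)} + \|S_{\ep,\delta}(t,\cdot)\|_{L^1(\TT)}\bigr),
\]
and since $L^q(\TT)\hookrightarrow L^1(\TT)$ and $L^2(\TT)\hookrightarrow L^1(\TT)$, the right-hand side lies in $L^p(0,T) + L^2(0,T) \subset L^{\min(2,p)}(0,T)$. As $\Phi_{\ep,\delta}$ is constant in $x$, its $L^{\min(2,q)}(\TT)$-norm is just a constant multiple of its absolute value, so $\Phi_{\ep,\delta}\in L^{\min(2,p)}(0,T;L^{\min(2,q)}(\TT))$.

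The step I expect to require the most care is the control of $\Phi_{\ep,\delta}$, precisely because everything hinges on the uniform (in $\ep,\delta$) lower bound $\langle \nu_{\ep,\delta}\rangle \gtrsim 1$ furnished by Lemma~\ref{lem:nu_delta}; without it one could not divide. Once this lower bound is in hand the remainder is bookkeeping: assembling $F_{\ep,\delta} = S_{\ep,\delta} - \Phi_{\ep,\delta}$ in $L^{\min(2,p)}(0,T;L^{\min(2,q)}(\TT))$ and combining with the bounds on $p_{\ep,\delta}(\rho_{\ep,\delta})$ and ${\rm div}\,u_{\ep,\delta}$ yields the claimed integrability of $\lambda_{\ep,\delta}(\rho_{\ep,\delta})\,{\rm div}\,u_{\ep,\delta}$.
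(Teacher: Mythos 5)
Your proposal is correct and follows essentially the same route as the paper: the paper's proof likewise invokes the decomposition coming from Equation~\eqref{eq:expr_lambdadivu} (equivalently~\eqref{eq:F-0}), namely $\lambda_{\ep,\delta}\Div u_{\ep,\delta} = -2\mu\,\Div u_{\ep,\delta} + p_{\ep,\delta}(\rho_{\ep,\delta}) + S_{\ep,\delta} - \langle(p_{\ep,\delta}+S_{\ep,\delta})\nu_{\ep,\delta}\rangle/\langle\nu_{\ep,\delta}\rangle$, and then appeals to Lemma~\ref{lem:nu_delta} to handle the mean-value quotient. You have simply spelled out the estimates for each summand that the paper leaves implicit.
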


\begin{proof} We come back to the formula
$$\lambda_{\varepsilon,\delta} (\rho_{\varepsilon,\delta}) \Div u_{\varepsilon,\delta} =
  -2 \mu {\rm div} u_{\varepsilon,\delta}  
  + p_{\varepsilon,\delta} (\rho_{\varepsilon,\delta} ) + S_{\ep,\delta} 
  - \frac{<(p_{\varepsilon,\delta} (\rho_{\varepsilon,\delta} ) + S_{\ep,\delta})\, \nu_{\varepsilon,\delta} (\rho_{\varepsilon,\delta} )>}{<\nu_{\varepsilon,\delta} (\rho_{\varepsilon,\delta} )>}.
  $$
  It suffices now to use previous lemma to conclude.
 \end{proof}

\subsubsection*{Existence of global solutions to the approximate System}

At $\delta$ fixed, one can construct global weak solutions to the truncated.
system
\begin{thm}\label{approx}
	Let $0 < \delta < \delta^0 = 1-R_\ep$ and $\rho_{\ep,\delta}^0 = \rho_\ep^0$ satisfying~\eqref{hyp:bound-rho0}.
	Let us assume $f \in L^2(0,T; L^{\bar{q}}( \TT))$ with $\bar{q}>3$.
	Then for all $T \in (0, +\infty)$ there exists a global weak solution $(\rho_{\ep,\delta},u_{\ep,\delta})$ to the truncated system~\eqref{eq:system-delta-mass}-\eqref{eq:system-delta-momentum}, i.e.
	\begin{enumerate}
		\item $\rho_{\ep,\delta} \in \mathcal{C}([0,T];L^q(\TT)) \cap L^\infty((0,T) \times \TT)$ for all $q \in [1, +\infty)$, $u_{\ep,\delta}\in L^2(0,T;(H^1(\TT))^3)$;
		\item $(\rho_{\ep,\delta},u_{\ep,\delta})$ satisfies \eqref{eq:system-delta-mass}-- \eqref{eq:system-delta-momentum}
		  in the weak sense:
		\begin{align}\label{eq:weak-delta-mass}
		& \int_0^T\int_{\TT}{\rho_{\ep,\delta} \partial_t \phi} 
		+ \int_0^T\int_{\TT}{\rho_{\ep,\delta} u_{\ep,\delta} \cdot \nabla \phi} \nonumber \\
		& \qquad = \int_{\TT}{\rho_\ep(T) \phi(T)} - \int_{\TT}{\rho_\ep^0 \phi(0)}
		\qquad \forall \ \phi \in \mathcal{C}^1([0,T] \times \TT);
		\end{align}	
		\begin{align}\label{eq:weak-delta-div-mom}
		& - \int_0^T\int_{\TT}{p_{\ep,\delta}(\rho_{\ep,\delta})\Div \psi} 
		+ \int_0^T\int_{\TT}{(2\mu+\lambda_{\ep,\delta}(\rho_{\ep,\delta}))
		  \Div u_{\ep,\delta} \, \Div \psi}  \nonumber \\
		 &   +  \int_0^T\int_{\TT}
		      \mu\,  {\rm curl}\,  u_{\ep,\delta} \cdot  {\rm curl} \psi
		      +  r\int_0^T\int_{\TT}{u_{\ep,\delta} \cdot \psi} \nonumber\\
		  & \hspace{2.5cm}    = \int_{\TT}{f  \cdot \psi}
		\qquad \forall \ \psi \in \mathcal{C}^1([0,T] \times \TT) 
		%\hbox{ with }		 \int_{\TT} \psi = 0.
		\end{align}		
		\item The \emph{renormalized continuity equation} holds
		\begin{equation}\label{eq:renormalized-delta}
		\partial_t b(\rho_{\ep,\delta}) + \Div(b(\rho_{\ep,\delta}) u_{\ep,\delta}) 
	+ \big(b_+'(\rho_{\ep,\delta})\rho_{\ep,\delta} 
	          - b(\rho_{\ep,\delta})\big)\Div u_{\ep,\delta} = 0,
		\end{equation}
		for any $b$ piecewise ${\cal C}^1$ and where $b'_+$
		is the right derivative of $b$. 
		\item The energy inequality holds
		\begin{align}\label{eq:energy_delta}
		&\sup_{t\in[0,T]} \int_{\TT} H_{\ep,\delta}(\rho_{\ep,\delta})  
		+ \int_0^T\int_{\TT}{(\frac{3}{2}\mu+\lambda_{\ep,\delta}(\rho_{\ep,\delta}) )(\Div u_{\ep,\delta})^2 } + \frac{\mu}{2} \int_0^T\int_{\TT}{ |{\rm curl}(u_{\ep,\delta})|^2} \nonumber\\
		& \hskip1cm  + r \int_0^T\int_{\TT}{ |u_{\ep,\delta}|^2} 
		   \leq  \int_{\TT}{H_{\ep,\delta}(\rho_\ep^0)} +  \frac{1+C}{2\mu} \|f\|^2_{L^2(0,T;L^p(\TT))}.
		\end{align}
		with $C$ the constant linked to Poincar\'e-Wirtinger inequality 
		$$\|g - <g>\|^2_{L^2(\TT)} \le C \|\nabla g \|^2_{L^2(\TT)}, $$
		and where 
			\begin{equation}\label{eq:H_delta}
	H_{\ep,\delta}(\rho) =
	\begin{cases}
	~ \dfrac{\ep}{\gamma-1} \dfrac{\rho^{\gamma}}{(1-\rho)^{\gamma-1}} \quad & \text{if} \quad \rho \leq 1-\delta \\
	~ \dfrac{\ep}{\gamma-1} \dfrac{\rho^\gamma}{\delta^\gamma} - \dfrac{\ep}{(\gamma - 1) \delta^{\gamma }}(1-\delta)^{\gamma} \rho \quad & \text{if} \quad \rho > 1-\delta.\> 
	\end{cases}
	\end{equation}	
	\end{enumerate}
\end{thm}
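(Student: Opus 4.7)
Since the truncation in \eqref{eq:p_delta}-\eqref{eq:lambda_delta} makes $p_{\ep,\delta}$ and $\lambda_{\ep,\delta}$ continuous with polynomial growth in $\rho$, the singularity at $\rho=1$ is removed. With the inertial term absent, the momentum equation is, at $\rho$ given, a linear elliptic equation for $u$ with coefficient $2\mu+\lambda_{\ep,\delta}(\rho)\geq 2\mu>0$. The plan is a standard Lions-Feireisl type construction, considerably simplified by the absence of convective nonlinearity.

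\textbf{Construction at the approximate level.} I would add a parabolic regularization of the continuity equation, $\partial_t \rho_\eta + \Div(\rho_\eta u_\eta) - \eta \Delta \rho_\eta = 0$, with a smoothed initial datum, and define a map $T_\eta:v\mapsto u$ as follows: given $v\in L^2(0,T;H^1(\TT))$, first solve the parabolic mass equation with transport velocity $v$ (obtaining $0\le \rho_v\le R_\ep$ by the maximum principle), then solve the linear elliptic momentum equation for $u$ via Lax-Milgram. Compactness of $T_\eta$ (from parabolic smoothing of $\rho_v$ and $H^1$ energy bounds on $u$) combined with Schauder's fixed-point theorem produces $(\rho_\eta,u_\eta)$. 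Testing the momentum equation against $u_\eta$ and the continuity equation against $H'_{\ep,\delta}(\rho_\eta)$ yields the $\eta$-level energy identity with an additional dissipation term $\eta\int H''_{\ep,\delta}(\rho_\eta)|\nabla \rho_\eta|^2$; combined with the identity $r\langle u_\eta\rangle=\langle f\rangle$ obtained by space-integrating the momentum equation, and with Poincar\'e-Wirtinger on $u_\eta - \langle u_\eta\rangle$, this produces bounds uniform in $\eta$ on $u_\eta$ in $L^2(0,T;H^1(\TT))$ and on $H_{\ep,\delta}(\rho_\eta)$ in $L^\infty(0,T;L^1(\TT))$, and hence on $\rho_\eta$ in $L^\infty$.

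\textbf{Passage to the limit $\eta\to 0$.} This is the main obstacle: weak compactness of $(\rho_\eta,u_\eta)$ is immediate, but one needs strong convergence of $\rho_\eta$ in order to pass to the limit in the nonlinear terms $p_{\ep,\delta}(\rho_\eta)$ and $\lambda_{\ep,\delta}(\rho_\eta)\Div u_\eta$. Here the effective flux identity \eqref{eq:F-0} proved at the beginning of the present section plays the key role: $F_\eta=(2\mu+\lambda_{\ep,\delta}(\rho_\eta))\Div u_\eta - p_{\ep,\delta}(\rho_\eta)$ satisfies $-\Delta F_\eta=\Div(f-r u_\eta)$, hence gains one spatial derivative independently of $\eta$. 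Following the classical Lions-Feireisl strategy, one uses the renormalized continuity equation (available via DiPerna-Lions thanks to $u_\eta\in L^2(0,T;H^1(\TT))$ and $\rho_\eta\in L^\infty$) together with this extra regularity of $F_\eta$ to identify, at the limit, $\overline{p_{\ep,\delta}(\rho)\Div u}$ in terms of $\overline{p_{\ep,\delta}(\rho)}$ and $\Div u$. The strict monotonicity of $p_{\ep,\delta}$ then forces $\overline{p_{\ep,\delta}(\rho)}=p_{\ep,\delta}(\rho)$ and yields $\rho_\eta\to\rho$ strongly in $L^q((0,T)\times\TT)$ for every finite $q$.

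\textbf{Conclusion.} Strong convergence of $\rho_\eta$ allows passing to the limit in the weak formulations \eqref{eq:weak-delta-mass}-\eqref{eq:weak-delta-div-mom}. The renormalized equation \eqref{eq:renormalized-delta} is inherited via the DiPerna-Lions framework, continuity in time of $\rho$ in $L^q(\TT)$ is then deduced from it, and the energy inequality \eqref{eq:energy_delta} is obtained by weak lower semicontinuity from the $\eta$-level energy identity (the artificial-viscosity dissipation contributing a nonnegative term which is simply dropped).
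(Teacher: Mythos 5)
Your construction via a parabolic regularization $\eta\Delta\rho$ and a Schauder fixed point is an acceptable alternative to the paper's route (the paper regularizes the initial datum and solves at the regular level as in Lions, Chapter~8.2), and the energy estimate you indicate is the right one. But two claims do not hold as stated. First, the maximum principle does not give $\rho_v\le R_\ep$: for $\partial_t\rho + v\cdot\nabla\rho + \rho\,\Div v - \eta\Delta\rho = 0$ the zero-order coefficient $\Div v$ can be negative, and $\rho$ may then exceed $R_\ep$ in regions of compression. What the maximum principle gives is nonnegativity. The uniform $L^\infty$ bound on the density at the weak-solution level is not a consequence of a comparison argument for the regularized continuity equation; it is obtained in the paper from the effective-flux structure, by writing $\Div u = \nu(\rho)(F + p(\rho))$, integrating along Lagrangian trajectories and using the uniform $L^\infty$ bound on $\rho\nu(\rho)$ (Proposition~\ref{prop:bound_rhodelta}). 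That Lagrangian argument is unavailable at the $\eta$-level and you must work with the weaker $L^\infty(0,T;L^\gamma)$ bound coming from $H_{\ep,\delta}$ until after $\eta\to 0$.

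Second, and more importantly, the strong-convergence step for $\eta\to 0$ as you describe it — invoke the extra regularity of $F_\eta$, then ``the strict monotonicity of $p_{\ep,\delta}$ forces $\overline{p_{\ep,\delta}(\rho)}=p_{\ep,\delta}(\rho)$'' — is the constant-bulk-viscosity argument, and the paper states explicitly that this is \emph{not} how the strong convergence is obtained here. With $\lambda_{\ep,\delta}(\rho)$ density dependent one has $\Div u = \nu_{\ep,\delta}(\rho)\bigl(F + p_{\ep,\delta}(\rho)\bigr)$ with $\nu_{\ep,\delta}(\rho)=(2\mu+\lambda_{\ep,\delta}(\rho))^{-1}$ nonlinear in $\rho$. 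The object whose weak limits have to be identified is therefore $\rho^2\nu_{\ep,\delta}(\rho)F$ (not $\rho^2F$), and the monotonicity that matters is that of $b_{\ep,\delta}=p_{\ep,\delta}\nu_{\ep,\delta}$, not of $p_{\ep,\delta}$. This requires the compensated-compactness identities of Proposition~\ref{prop:compactness_F} plus a Gronwall argument to absorb the resulting $|\overline{F}|\,\Psi$ term; moreover $b_{\ep,\delta}$ is increasing only when $\gamma>\beta$, and in the dominant-bulk regime $\beta\ge\gamma$ one replaces its monotonicity by its boundedness. This is precisely the step the paper singles out as ``non-standard, and different from the case with a constant bulk viscosity term,'' and your outline replaces it by the classical argument that does not apply.
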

Note that defining  $\Lambda_{\ep,\delta}$  by
\begin{equation}\label{df:Lambda_delta}
\Lambda_{\ep,\delta}(\rho) =
\begin{cases}
~ \dfrac{\ep}{\beta-1} \dfrac{\rho^\beta}{(1-\rho)^{\beta-1}} \quad & \text{if} \quad \rho \leq 1-\delta \\
~ \dfrac{\ep}{\beta-1} \dfrac{\rho^\beta}{\delta^\beta} - \dfrac{\ep}{(\beta-1)\delta^{\beta}} (1-\delta)^\beta \rho \quad & \text{if} \quad \rho > 1-\delta
\end{cases}
\end{equation} 
we get  the following renormalized continuity equation in $\mathcal{D}'((0,T)\times \TT)$
\begin{equation}\label{eq:Lambda_delta}
\partial_t \Lambda_{\ep,\delta}(\rho_{\ep,\delta}) + \Div (\Lambda_{\ep,\delta}(\rho_{\ep,\delta}) u_{\ep,\delta}) = - \lambda_{\ep,\delta}(\rho_{\ep,\delta})\Div u_{\ep,\delta}.
\end{equation}
 The existence of global weak solutions to the approximate system, namely Theorem \ref{approx}, follows from a standard procedure.
 For reader's convenience, since our main goal is the study of the singular systems, we just present the idea of the proof.
 The analysis is in fact very similar to the classical case with constant bulk viscosity treated in~\cite{lions1998} Chapter 8.2.
 We construct exactly in the same way the solutions by solving first the system for a regular initial data $\rho^0$ via a fixed point argument. 
 Then, for a general initial density $\rho^0 \in L^\infty(\TT)$, we regularize $\rho^0$ and prove that we can pass to the limit with respect to the parameter of the regularization.
 Compactness arguments are needed to identify the limit quantities, and in particular, we need to prove the strong convergence of the sequence of densities. 
 The arguments to justify this strong convergence are non-standard, and different from the case with a constant bulk viscosity term, but we justify in details this point in Section~\ref{sec:delta1} for the limit $\delta \rightarrow 0$.
 We refer to~\cite{lions1998} for more details.

 \bigskip
 Once we have our global weak solutions $(\rho_{\ep,\delta},u_{\ep,\delta})$, we want to pass to the limit with respect to $\delta$ at $\ep$ fixed to get global existence of weak solutions for the singular PDE. 
 It will be then possible to pass to the limit with respect to $\varepsilon$ to get the congestion systems.\\
 We will divide the study in two sections depending on the sign of $\gamma-\beta$. First, we treat the dominant pressure regime $\gamma > \beta$ (Section~\ref{sec:pressure}), then the dominant bulk viscosity regime $\beta \ge \gamma$ (Section~\ref{sec:bulk}).
 
\section{Dominant pressure regime $\gamma > \beta > 1$}\label{sec:pressure}

\subsection{Existence of weak solutions at $\ep$ fixed}

%%%%%%%%%%%%%%%%%%%%%%%%%%%%%%%%%%%%%%%%%%%%%%%%%%

\subsubsection{Uniform estimates with respect to $\delta$}

First of all, observe that if we consider $\delta$ small enough, $\delta < 1-R_\ep$ ($\ep$ is fixed), we ensure that initially 
\[
H_{\ep,\delta}(\rho_{\ep,\delta}^0) = H_{\ep}(\rho_{\ep}^0) \quad \text{bounded in} \quad L^1(\TT).
\]
Thanks to Theorem~\ref{approx}, the solutions $(\rho_{\ep,\delta},u_{\ep,\delta})$ satisfy the energy estimate~\eqref{eq:energy_delta}, so that $(u_{\ep,\delta})_\delta$ is bounded in $L^2(0,T;(H^1(\TT))^3)$
and $(H_{\ep,\delta}(\rho_{\ep,\delta}))_\delta$ is bounded in $L^\infty(0,T;L^1(\TT))$.
In particular, the control of the internal energy $H_{\ep,\delta}$ leads easily to a control of the density
\begin{equation}\label{bound-rd-Lgamma}
\rho_{\ep,\delta} \quad \text{is bounded in} \quad L^{\infty}(0,T;L^\gamma(\TT)).
\end{equation}
In the following Lemma, we improve the control of the density by using the singularity of $H_{\ep,\delta}$ in $\delta$.
\begin{lem}\label{lem:meas_largerho_delta}
	Let $(\rho_{\varepsilon, \delta}, u_{\varepsilon,\delta})$ be a global weak solution
	of the compressible Brinkman system. Then 
	\begin{equation}\label{eq:est-rhodelta}
	\sup_{t\in [0,T]}	\mathrm{meas} \,\big\{ x\in \TT , \ \rho_{\ep,\delta}(t,x) \geq 1-\delta \big\}  \leq C(\ep) \ \delta^{\gamma -1} .
	\end{equation}
\end{lem}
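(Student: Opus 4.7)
The plan is to exploit the singular behaviour of the internal energy density $H_{\ep,\delta}$ in the branch $\rho > 1-\delta$, combined with the uniform-in-$\delta$ energy estimate \eqref{eq:energy_delta}, via a Chebyshev-type argument.

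First I would check that the right-hand side of \eqref{eq:energy_delta} is bounded independently of $\delta$ (but possibly depending on $\ep$). For $\delta < 1 - R_\ep$, one has on the support of $\rho_\ep^0$ the equality $H_{\ep,\delta}(\rho_\ep^0) = H_\ep(\rho_\ep^0)$, and by \eqref{hyp:energy-t0-C1} this is bounded by a constant of the form $E^0/(\gamma-1)$, up to a factor depending on $R_\ep$. Together with the assumed $L^2_tL^q_x$ bound on $f$, this gives
\[
\sup_{t \in [0,T]} \int_{\TT} H_{\ep,\delta}(\rho_{\ep,\delta}(t,\cdot)) \, dx \le C(\ep)
\]
with $C(\ep)$ independent of $\delta$.

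Next I would derive a pointwise lower bound on $H_{\ep,\delta}(\rho)$ for $\rho \in [1-\delta,1]$. On this interval,
\[
H_{\ep,\delta}(\rho) = \frac{\ep}{(\gamma-1)\delta^\gamma}\bigl[\rho^\gamma - (1-\delta)^\gamma \rho\bigr].
\]
A direct derivative computation shows that the bracket is non-decreasing in $\rho$ on $[1-\delta, 1]$ (its derivative at $\rho=1-\delta$ equals $(1-\delta)^{\gamma-1}[\gamma-(1-\delta)] > 0$, and it stays positive for $\rho \ge 1-\delta$). Hence for $\rho_{\ep,\delta} \ge 1-\delta$,
\[
H_{\ep,\delta}(\rho_{\ep,\delta}) \ge H_{\ep,\delta}(1-\delta) = \frac{\ep (1-\delta)^\gamma}{(\gamma-1)\,\delta^{\gamma-1}}.
\]
Choosing $\delta \le \delta^0 < 1$, the factor $(1-\delta)^\gamma$ is bounded below by a positive constant independent of $\delta$.

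Finally, applying Chebyshev's inequality pointwise in $t$,
\[
\frac{\ep (1-\delta^0)^\gamma}{(\gamma-1)\,\delta^{\gamma-1}} \,\mathrm{meas}\{\rho_{\ep,\delta}(t,\cdot) \ge 1-\delta\} \le \int_{\{\rho_{\ep,\delta}(t,\cdot) \ge 1-\delta\}} H_{\ep,\delta}(\rho_{\ep,\delta})\,dx \le C(\ep),
\]
which rearranges to the claimed bound \eqref{eq:est-rhodelta}. There is no real obstacle here; the only care required is to track the $\ep$-dependence in the constants (so that $C(\ep)$ is well-defined for fixed $\ep$) and to verify monotonicity of the bracket in $H_{\ep,\delta}$ so that the minimum on $[1-\delta, 1]$ is indeed attained at the left endpoint.
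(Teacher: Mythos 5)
Your proposal is correct and follows essentially the same route as the paper: bound $\sup_t\int_{\TT} H_{\ep,\delta}(\rho_{\ep,\delta})$ by a $\delta$-independent constant via the energy estimate, lower-bound $H_{\ep,\delta}$ on $\{\rho_{\ep,\delta}\ge 1-\delta\}$ by $\ep(1-\delta)^\gamma/[(\gamma-1)\delta^{\gamma-1}]$, and conclude by Chebyshev. The paper gets the same lower bound by algebraically splitting $\rho^{\gamma-1}-(1-\delta)^\gamma$, whereas you use monotonicity of the bracket; this is a cosmetic difference (note only that $\rho_{\ep,\delta}$ is not a priori $\le 1$ in the truncated system, so the monotonicity check should be phrased on $[1-\delta,\infty)$, which your derivative argument in fact covers).
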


\begin{proof} The energy $H_{\ep,\delta}$ being defined as~\eqref{eq:H_delta}, recalling that $\gamma>1$, we have
	\begin{align*}
	C \geq \sup_{t\in [0,T]} \int_{\TT}{H_{\ep,\delta}(\rho_{\ep,\delta})}
	& \geq \int_{\TT}{H_{\ep,\delta}(\rho_{\ep,\delta}) \mathbf{1}_{\{ \rho_{\ep,\delta} > 1-\delta \}}} \\
	& \geq C\int_{\TT}{\dfrac{\ep}{\delta^\gamma}\Bigl[
		(\rho_{\varepsilon,\delta}^{\gamma-1} - (1-\delta)^{\gamma-1})
		+ (1-\delta)^{\gamma-1}\delta  \Bigr] \rho_{\ep,\delta}
		\mathbf{1}_{\{ \rho_{\ep,\delta} > 1-\delta \}}} \\
	& \geq  C\int_{\TT}{\dfrac{\ep}{\delta^{\gamma-1} }\Bigl[
		(1-\delta)^{\gamma}  \Bigr] 
		\mathbf{1}_{\{ \rho_{\ep,\delta} > 1-\delta \}}}
	\end{align*}
	which ends the proof.
\end{proof}

\begin{lem}  Let $(\rho_{\varepsilon, \delta}, u_{\varepsilon,\delta})$ be a global weak 
of the compressible Brinkman system~\eqref{eq:semi-stat-delta} with $\gamma > \beta > 1$. Then 
\begin{align*} 
& \|\Lambda_{\varepsilon,\delta}(\rho_{\ep,\delta})\|_{L^1((0,T)\times \TT)} +
     \|p_{\varepsilon,\delta}(\rho_{\ep,\delta})\|_{L^1((0,T)\times \TT)}  \nonumber\\
& \qquad + 
     \|\lambda_{\varepsilon,\delta}(\rho_{\ep,\delta}) {\rm div} u_{\varepsilon, \delta}\|_{L^1((0,T)\times \TT)}
     + \|\lambda_{\varepsilon,\delta}(\rho_{\varepsilon,\delta})\|_{L^{\gamma/\beta}((0,T)\times \TT)}
  \le C_\varepsilon
\end{align*}
where $C_\varepsilon$ does not depend on $\delta$.
\end{lem}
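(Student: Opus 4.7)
I would first note that the four quantities in the statement are linked by simple pointwise algebraic relations, which reduce the whole problem to a single $L^1$ bound on $p_{\ep,\delta}$. A direct computation on both branches of the truncation shows that
\[
\lambda_{\ep,\delta}(\rho)^{\gamma/\beta} = \ep^{\gamma/\beta-1}\, p_{\ep,\delta}(\rho), \qquad
\Lambda_{\ep,\delta}(\rho) \le \frac{1}{\beta-1}\,\lambda_{\ep,\delta}(\rho), \qquad \rho\in[0,1).
\]
Combined with the effective-flux decomposition
$\lambda_{\ep,\delta}\Div u_{\ep,\delta} = p_{\ep,\delta} + S + \langle F\rangle - 2\mu\,\Div u_{\ep,\delta}$
coming from~\eqref{eq:F-0}, and with the energy bound $\Div u_{\ep,\delta}\in L^2((0,T)\times\TT)$, an $L^1$ bound on $p_{\ep,\delta}$ immediately yields the $L^{\gamma/\beta}$ bound on $\lambda_{\ep,\delta}$, the $L^1$ bound on $\lambda_{\ep,\delta}\Div u_{\ep,\delta}$, and the $L^1$ bound on $\Lambda_{\ep,\delta}$ (using $\gamma/\beta>1$ on a finite-measure domain).

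To bound $p_{\ep,\delta}$ in $L^1$, I would first integrate the renormalized equation~\eqref{eq:Lambda_delta} in space and time; since $\Lambda_{\ep,\delta}\ge 0$ and $\delta<1-R_\ep$ keeps $\rho_\ep^0$ in the unregularized regime, this yields the one-sided control
\[
\int_0^T\!\!\int_{\TT} \lambda_{\ep,\delta}(\rho_{\ep,\delta})\,\Div u_{\ep,\delta} \,\le\, \int_{\TT} \Lambda_{\ep,\delta}(\rho_\ep^0) \,\le\, C_\ep.
\]
Then, taking the spatial mean of the effective-flux identity and using $\int_{\TT}\Div u_{\ep,\delta} = \int_{\TT} S = 0$, I would obtain
$\int_{\TT}\lambda_{\ep,\delta}\Div u_{\ep,\delta} = \int_{\TT} p_{\ep,\delta} + |\TT|\langle F\rangle$
with $\langle F\rangle = -\langle \nu_{\ep,\delta}(S+p_{\ep,\delta})\rangle/\langle\nu_{\ep,\delta}\rangle$. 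Combining these two identities, and using Lemma~\ref{lem:nu_delta} to bound $\langle \nu_{\ep,\delta}\rangle$ from below, I arrive at an inequality of the form
\[
\int_0^T\!\!\int_{\TT} p_{\ep,\delta} \,\le\, C_\ep + \frac{|\TT|}{C_2}\int_0^T\!\!\int_{\TT} \nu_{\ep,\delta}\, p_{\ep,\delta},
\]
where the contribution of $S$ has been absorbed into $C_\ep$ thanks to $\|S\|_{L^2}\lesssim \|f\|_{L^2}+\|u_{\ep,\delta}\|_{L^2}$ and $\nu_{\ep,\delta}\le 1/(2\mu)$.

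The main obstacle is closing this last inequality: the naive bound $\nu_{\ep,\delta}\le 1/(2\mu)$ does not produce a small coefficient and cannot be absorbed. This is the only place where the dominance assumption $\gamma>\beta$ really enters. The identity $\lambda_{\ep,\delta} = \ep^{(\gamma-\beta)/\gamma}p_{\ep,\delta}^{\beta/\gamma}$ shows that on $\{p_{\ep,\delta}>P_0\}$
\[
\nu_{\ep,\delta}\, p_{\ep,\delta} \,\le\, \frac{p_{\ep,\delta}}{\lambda_{\ep,\delta}} \,=\, \ep^{-(\gamma-\beta)/\gamma}\, p_{\ep,\delta}^{(\gamma-\beta)/\gamma} \,\le\, \ep^{-(\gamma-\beta)/\gamma}\, P_0^{-\beta/\gamma}\, p_{\ep,\delta},
\]
while on $\{p_{\ep,\delta}\le P_0\}$ one has the trivial bound $\nu_{\ep,\delta}\, p_{\ep,\delta} \le P_0/(2\mu)$. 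Choosing $P_0=P_0(\ep)$ large enough so that $\frac{|\TT|}{C_2}\ep^{-(\gamma-\beta)/\gamma}P_0^{-\beta/\gamma}\le \tfrac12$ allows the integral $\int\int \nu_{\ep,\delta} p_{\ep,\delta}$ to be absorbed on the left, yielding $\|p_{\ep,\delta}\|_{L^1((0,T)\times\TT)}\le C_\ep$. The three remaining bounds follow at once from the opening relations.
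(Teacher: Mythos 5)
Your proof is correct and reaches the same $L^1$ bound on the pressure, but by a genuinely different mechanism in the crucial step. Both arguments start from integrating~\eqref{eq:Lambda_delta} in space, replacing $\int_{\TT}\lambda_{\ep,\delta}\Div u_{\ep,\delta}$ by the mean effective-flux formula~\eqref{eq:lambda_divu}, and using the lower bound $\int_{\TT}\nu_{\ep,\delta}\ge C_2$ from Lemma~\ref{lem:nu_delta}, which leaves the task of controlling $\int_{\TT}p_{\ep,\delta}\,\nu_{\ep,\delta}$. At that point the paper compares pointwise $p_{\ep,\delta}/\lambda_{\ep,\delta}\lesssim (1-\rho_{\ep,\delta})^{-(\gamma-\beta)}$ to the internal energy density $H_{\ep,\delta}(\rho_{\ep,\delta})$ (splitting into the three zones $\rho<M^0$, $M^0\le\rho<1-\delta$, $\rho\ge 1-\delta$, and exploiting $0<\gamma-\beta<\gamma-1$), and then closes using the $L^\infty_tL^1_x$ bound on $H_{\ep,\delta}$ coming from the energy inequality~\eqref{eq:energy_delta}. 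You instead use the exact pointwise identity $p_{\ep,\delta}/\lambda_{\ep,\delta}=(p_{\ep,\delta}/\ep)^{(\gamma-\beta)/\gamma}$ (valid on both branches of the truncation), split by the size of $p_{\ep,\delta}$ rather than by the size of $\rho_{\ep,\delta}$, and absorb the $\{p_{\ep,\delta}>P_0\}$ contribution into the left-hand side by taking $P_0=P_0(\ep)$ large enough. This avoids invoking the energy bound on $H_{\ep,\delta}$ altogether and is arguably a touch cleaner; the small price is that your absorption needs the a priori finiteness of $\int_0^T\!\int p_{\ep,\delta}$ at fixed $\delta$ — true since $\rho_{\ep,\delta}\in L^\infty((0,T)\times\TT)$ and the truncated law is locally bounded, but worth stating — whereas the paper's version, keeping $\frac{d}{dt}\int\Lambda_{\ep,\delta}$ on the left, never has to absorb anything. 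Your secondary shortcuts are also fine and slightly different from the paper's: the $L^1$ bound on $\Lambda_{\ep,\delta}$ is obtained from the pointwise inequality $\Lambda_{\ep,\delta}\le\lambda_{\ep,\delta}/(\beta-1)$ together with $L^{\gamma/\beta}\hookrightarrow L^1$ on the finite-measure cylinder, rather than from the differential inequality (which would have given the stronger $L^\infty_tL^1_x$ bound), and the $L^1$ bound on $\lambda_{\ep,\delta}\Div u_{\ep,\delta}$ is read off directly from the effective-flux decomposition together with $\|\langle F_{\ep,\delta}\rangle\|_{L^1_t}\lesssim\|p_{\ep,\delta}\|_{L^1}+\|S_{\ep,\delta}\|_{L^1}$, instead of appealing to Lemma~\ref{lambdadivu} — which is essentially the same computation packaged as a lemma.
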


\noindent {\it Proof.}
  Integrating in space~\eqref{eq:Lambda_delta} and using~\eqref{eq:lambda_divu}, we have
	\begin{equation}
	\Dt \int_{\TT}{\Lambda_{\ep,\delta}(\rho_{\ep,\delta})} + \int_{\TT}{p_{\ep,\delta}(\rho_{\ep,\delta})}
	= |\TT|\dfrac{\int_{\TT}{\big(S_{\ep,\delta}+p_{\ep,\delta}(\rho_{\ep,\delta})\big)\nu_{\ep,\delta}(\rho_{\ep,\delta})}}{\int_{\TT}{\nu_{\ep,\delta}(\rho_{\ep,\delta})}}.
	\end{equation}
	Using Lemma~\ref{lem:nu_delta} we can bound the right-hand side
	\begin{equation}\label{eq:dt-Lambda-delta}
	\Dt \int_{\TT}{\Lambda_{\ep,\delta}(\rho_{\ep,\delta})} + \int_{\TT}{p_{\ep,\delta}(\rho_{\ep,\delta})} 
	\leq \dfrac{C_1|\TT|}{C_2}\|S_{\ep,\delta}\|_{L^1} + \dfrac{|\TT|}{C_2}\int_{\TT}{p_{\ep,\delta}(\rho_{\ep,\delta})\nu_{\ep,\delta}(\rho_{\ep,\delta})}
	\end{equation}
	where
	\[
	\|S_{\ep,\delta}\|_{L^1} = \|(-\Delta)^{-1}\Div \big(f-r u_{\ep,\delta}\big) \|_{L^1} 
	\leq C \big(\|f\|_{L^2(0,T; L^{\bar{q}}(\TT))} + \|u_{\ep,\delta}\|_{L^2H^1}\big),
	\]
	and
	\begin{align*}
	p_{\ep,\delta}(\rho_{\ep,\delta})\nu_{\ep,\delta}(\rho_{\ep,\delta})
	& = \dfrac{p_{\ep,\delta}(\rho_{\ep,\delta})}{2\mu + \lambda_{\ep,\delta}(\rho_{\ep,\delta})} \\
	& \leq \dfrac{p_{\ep,\delta}(\rho_{\ep,\delta})}{2\mu} \mathbf{1}_{\{\rho_{\ep,\delta} < M_0 \}}
	+ \dfrac{p_{\ep,\delta}(\rho_{\ep,\delta})}{\lambda_{\ep,\delta}(\rho_{\ep,\delta})}\mathbf{1}_{\{M_0 \leq \rho_{\ep,\delta} < 1-\delta \}} 
	+ \dfrac{p_{\ep,\delta}(\rho_{\ep,\delta})}{\lambda_{\ep,\delta}(\rho_{\ep,\delta})}\mathbf{1}_{\{\rho_{\ep,\delta} \geq 1-\delta \}}.
	\end{align*}
	The first term of the right-hand side is bounded since $\rho_{\ep,\delta}$
	 is far from $1$.
	For the two other terms, which become singular as $\delta \rightarrow 0$, we ensure
	\begin{align*}
	\dfrac{p_{\ep,\delta}(\rho_{\ep,\delta})}{\lambda_{\ep,\delta}(\rho_{\ep,\delta})}\mathbf{1}_{\{M_0 \leq \rho_{\ep,\delta} < 1-\delta \}} \nonumber 
	& \leq \dfrac{C}{(1-\rho_{\ep,\delta})^{\gamma-\beta}}  \mathbf{1}_{\{M_0 \leq \rho_{\ep,\delta} < 1-\delta \}} \nonumber\\
	& \leq C(\ep)H_{\ep,\delta}(\rho_{\ep,\delta}) \mathbf{1}_{\{M_0 \leq \rho_{\ep,\delta} < 1-\delta \}} \label{eq:estim-p-v0}
	\end{align*}
	\begin{align*} 
	\dfrac{p_{\ep,\delta}(\rho_{\ep,\delta})}{\lambda_{\ep,\delta}(\rho_{\ep,\delta})}\mathbf{1}_{\{\rho_{\ep,\delta} \geq 1-\delta \}}
	& \leq  C \dfrac{\rho_{\ep,\delta}^{\gamma-\beta}}{\delta^{\gamma-\beta}}  \mathbf{1}_{\{\rho_{\ep,\delta} \geq 1-\delta \}}  \\
	& \leq  C(\ep)\Bigl[H_{\ep,\delta}(\rho_{\ep,\delta})
	                             + \rho\Bigr] \mathbf{1}_{\{\rho_{\ep,\delta} \geq 1-\delta \}} 
	\end{align*}
	since $\beta \in (1,\gamma)$ and thus $0 < \gamma - \beta < \gamma - 1$ (recall Definition~\eqref{eq:H_delta} of $H_{\ep,\delta}$).
	Using now the control of $H_{\ep,\delta}$ and the fact that the total mass is constant we deduce ($\ep$ is fixed here)
	\[ 
	\Dt \int_{\TT}{\Lambda_{\ep,\delta}(\rho_{\ep,\delta})} + \int_{\TT}{p_{\ep,\delta}(\rho_{\ep,\delta})} 
	\leq C(\ep).
	\]
	To conclude, let us observe that, using that $\beta <\gamma$ and the initial conditions~\eqref{hyp:bound-rho0} and~\eqref{hyp:energy-t0-C1}, we have
	\begin{equation*}
	\int_{\TT}{\Lambda_{\ep,\delta}(\rho_{\ep,\delta}(0,\cdot)) } 
	= \int_{\TT}{\Lambda_{\ep}(\rho_\ep^0) } 
	\leq C .
	\end{equation*}
	Hence, we get from integration in time of~\eqref{eq:dt-Lambda-delta} that
	\begin{equation*}\label{eq:bound-p-delta}
	\big(p_{\ep,\delta}(\rho_{\ep,\delta})\big)_\delta~ \text{is bounded in} ~L^1\big((0,T)\times \TT \big).
	\end{equation*}
  Coming back to Lemma \ref{lambdadivu}, we obtain
\begin{equation*}\label{bound-lambdadivu-delta}
\big(\lambda_{\ep,\delta}(\rho_{\ep,\delta}) \Div u_{\ep,\delta}\big)_\delta ~ \text{bounded in}~ L^1\big((0,T)\times \TT\big).
\end{equation*}
  Note that from the pressure estimate, since $\gamma > \beta$, we deduce that
\begin{equation*}\label{bound-lambda-delta}
\big(\lambda_{\ep,\delta}(\rho_{\ep,\delta})\big)_\delta ~ \text{is bounded in} ~L^{\frac{\gamma}{\beta}}\big((0,T)\times \TT \big).
\end{equation*}

These controls on the pressure and the bulk viscosity can now be used to prove a maximal bound on the density.
\begin{prop}\label{prop:bound_rhodelta}
	The density $\rho_{\ep,\delta}$ is bounded in $L^\infty((0,T)\times \TT)$ uniformly with respect to the cut-off parameter $\delta$.
\end{prop}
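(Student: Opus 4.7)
The strategy is the standard renormalization-plus-effective-flux approach: build a function of $\rho$ that blows up as $\rho \to 1^-$, show it satisfies a transport inequality with an $L^1(0,T;L^\infty)$ right-hand side, and conclude by a maximum principle. Introduce
\[
\phi_{\ep,\delta}(\rho) = \int_0^\rho \frac{\lambda_{\ep,\delta}(\tau)}{\tau}\,d\tau,
\]
which is of class $\mathcal{C}^1$ (the truncation is precisely designed so that $\lambda_{\ep,\delta}$ is continuous at $\rho = 1-\delta$) and satisfies $\tau \phi'_{\ep,\delta}(\tau) = \lambda_{\ep,\delta}(\tau)$. For $\rho \leq 1-\delta$, $\phi_{\ep,\delta}$ coincides with its non-truncated counterpart $\phi_\ep$ which behaves like $\ep/\bigl((\beta-1)(1-\rho)^{\beta-1}\bigr)$ as $\rho \to 1^-$. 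Consequently any uniform-in-$\delta$ $L^\infty$ bound on $\phi_{\ep,\delta}(\rho_{\ep,\delta})$ immediately yields a uniform sub-maximal bound $\rho_{\ep,\delta} \leq 1 - c(\ep) < 1$.

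Applying the renormalized continuity equation~\eqref{eq:renormalized-delta} with $b = \phi_{\ep,\delta}$, using $\rho_{\ep,\delta}\phi'_{\ep,\delta}(\rho_{\ep,\delta}) = \lambda_{\ep,\delta}(\rho_{\ep,\delta})$, gives the convective form
\[
\partial_t \phi_{\ep,\delta}(\rho_{\ep,\delta}) + u_{\ep,\delta}\cdot \nabla \phi_{\ep,\delta}(\rho_{\ep,\delta}) + \lambda_{\ep,\delta}(\rho_{\ep,\delta})\Div u_{\ep,\delta} = 0.
\]
The effective-flux identity~\eqref{eq:F-0} then rewrites $\lambda_{\ep,\delta}(\rho_{\ep,\delta})\Div u_{\ep,\delta}$ as $\theta_{\ep,\delta}(\rho_{\ep,\delta})\bigl(F_{\ep,\delta} + p_{\ep,\delta}(\rho_{\ep,\delta})\bigr)$, where $\theta_{\ep,\delta} := \lambda_{\ep,\delta}/(2\mu+\lambda_{\ep,\delta}) \in [0,1]$. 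Dropping the favorable dissipative contribution $-\theta_{\ep,\delta}\, p_{\ep,\delta} \leq 0$, one obtains the transport inequality
\[
\partial_t \phi_{\ep,\delta}(\rho_{\ep,\delta}) + u_{\ep,\delta}\cdot \nabla \phi_{\ep,\delta}(\rho_{\ep,\delta}) \leq |F_{\ep,\delta}(t,x)|.
\]

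The key analytic step is to bound $F_{\ep,\delta}$ in $L^1(0,T;L^\infty(\TT))$ uniformly in $\delta$. From $\Delta S_{\ep,\delta} = \Div(f - r u_{\ep,\delta})$, elliptic regularity and the embedding $W^{1,q}(\TT) \hookrightarrow L^\infty(\TT)$ for $q > 3$ (applied with $q = \min(\bar q,6)$, using $H^1 \hookrightarrow L^6$ to handle the $u_{\ep,\delta}$ contribution), together with the $L^2_t H^1_x$ control of $u_{\ep,\delta}$ from the energy estimate~\eqref{eq:energy_delta}, yield $S_{\ep,\delta} \in L^2(0,T;L^\infty(\TT))$ uniformly in $\delta$. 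For the averaged correction in~\eqref{eq:F-0}, Lemma~\ref{lem:nu_delta} provides $\langle \nu_{\ep,\delta}\rangle \geq C_2 > 0$; moreover the pointwise estimate $p_{\ep,\delta}\nu_{\ep,\delta} \leq C(\ep)\bigl(1 + H_{\ep,\delta}(\rho_{\ep,\delta}) + \rho_{\ep,\delta}\bigr)$, already exploited in the proof of the preceding lemma via $p_{\ep,\delta}/\lambda_{\ep,\delta} \lesssim H_{\ep,\delta}$ (absorbing a factor $(1-\rho)^{\beta-1}\leq 1$ since $\beta > 1$), combined with the $L^\infty_t L^1_x$ bound on $H_{\ep,\delta}(\rho_{\ep,\delta})$ and conservation of mass, yields $\langle p_{\ep,\delta}\nu_{\ep,\delta}\rangle$ bounded uniformly in $L^\infty(0,T)$. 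Putting these ingredients together in~\eqref{eq:F-0} gives $\|F_{\ep,\delta}\|_{L^1(0,T;L^\infty(\TT))} \leq C(\ep)$, independently of $\delta$.

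Setting $W_{\ep,\delta}(t,x) = \phi_{\ep,\delta}(\rho_{\ep,\delta}(t,x)) - \int_0^t \|F_{\ep,\delta}(s)\|_{L^\infty_x}\,ds$, the transport inequality becomes $\partial_t W_{\ep,\delta} + u_{\ep,\delta}\cdot \nabla W_{\ep,\delta} \leq 0$. The weak maximum principle for this inequality, rigorously justified by testing the renormalized equation against $[\phi_{\ep,\delta}(\rho_{\ep,\delta}) - M(t)]_+^{k-1}$ and passing to the limit $k\to\infty$ in the spirit of DiPerna-Lions, yields
\[
\|\phi_{\ep,\delta}(\rho_{\ep,\delta}(t))\|_{L^\infty_x} \leq \|\phi_{\ep,\delta}(\rho_\ep^0)\|_{L^\infty_x} + \|F_{\ep,\delta}\|_{L^1(0,T;L^\infty_x)} =: M(\ep),
\]
the right-hand side being $\delta$-independent since $\rho_\ep^0 \leq R_\ep < 1-\delta$ forces $\phi_{\ep,\delta}(\rho_\ep^0) = \phi_\ep(\rho_\ep^0) \leq C(\ep,R_\ep)$. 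Coercivity of $\phi_\ep$ at the maximal density closes the argument: once $\delta$ is small enough that $\phi_\ep(1-\delta) \geq c\ep/\delta^{\beta-1} > M(\ep)$, necessarily $\rho_{\ep,\delta} < 1-\delta$, and the bound $\phi_\ep(\rho_{\ep,\delta}) \leq M(\ep)$ inverts into $\rho_{\ep,\delta} \leq 1 - c(\ep) < 1$ uniformly in $\delta$. The main obstacle in executing this plan is the rigorous justification of the $L^\infty$ maximum principle with a velocity lying only in $L^2_t H^1_x$; the remaining steps essentially assemble estimates already available from the energy inequality, the previous lemma, and the structural identity~\eqref{eq:F-0}.
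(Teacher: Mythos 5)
Your proposal is correct but takes a genuinely different route from the paper. The paper works directly on $\rho_{\ep,\delta}$ in Lagrangian form: from $\tfrac{D}{Dt}\rho_{\ep,\delta}=-\rho_{\ep,\delta}\Div u_{\ep,\delta}$ and the effective-flux identity~\eqref{eq:expr_lambdadivu}, it writes $\tfrac{D}{Dt}\rho_{\ep,\delta}+\rho_{\ep,\delta}\nu_{\ep,\delta}p_{\ep,\delta}= -\rho_{\ep,\delta}\nu_{\ep,\delta}S_{\ep,\delta}-\tfrac{\rho_{\ep,\delta}\nu_{\ep,\delta}}{|\TT|}\int(\lambda_{\ep,\delta}\Div u_{\ep,\delta}-p_{\ep,\delta})$, drops the nonnegative pressure term, and integrates along characteristics. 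The crucial algebraic input is that $\rho\nu_{\ep,\delta}(\rho)=\rho/(2\mu+\lambda_{\ep,\delta}(\rho))$ is bounded uniformly in $\delta$ \emph{because} $\beta>1$; combined with $\|S_{\ep,\delta}\|_{L^1_tL^\infty_x}$ and the $L^1_t$ control of $\int(\lambda_{\ep,\delta}\Div u_{\ep,\delta}-p_{\ep,\delta})$, this yields $\rho_{\ep,\delta}\le 1+C(\ep)$, which is an $L^\infty$ bound (not a strictly sub-maximal one). You instead renormalize with $\phi_{\ep,\delta}(\rho)=\int_0^\rho\lambda_{\ep,\delta}(\tau)/\tau\,d\tau$, which is coercive near $\rho=1$, use the factor $\theta_{\ep,\delta}=\lambda_{\ep,\delta}\nu_{\ep,\delta}\in[0,1]$ (trivially bounded, no need to invoke $\beta>1$ at this stage), drop the sign-favourable $-\theta_{\ep,\delta}p_{\ep,\delta}$, and need the stronger datum $\|F_{\ep,\delta}\|_{L^1_tL^\infty_x}\le C(\ep)$ uniformly in $\delta$ — which you correctly assemble from the elliptic gain on $S_{\ep,\delta}$, Lemma~\ref{lem:nu_delta}, and the pointwise bound on $p_{\ep,\delta}\nu_{\ep,\delta}$ extracted from the preceding lemma's proof. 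Your approach buys a sharper conclusion, namely a quantitative $\rho_{\ep,\delta}\le 1-c(\ep)<1$ for $\delta$ small, and it replaces the paper's (implicitly rough) Lagrangian integration by the renormalized equation, which is part of the definition of weak solution; what it costs is the $L^1_tL^\infty_x$ estimate on $F_{\ep,\delta}$ (the paper only needs $S_{\ep,\delta}\in L^1_tL^\infty_x$ plus the $L^1_t$ mean-value control) and the rigorous $L^\infty$ maximum principle for the transport inequality, which you flag honestly as the technical gap; both proofs share this regularity caveat in one form or another. In short: same structural inputs (effective flux, Lemma~\ref{lem:nu_delta}, estimates on $S$ and $p\nu$), but a different renormalization that trades a simpler algebraic trick for a stronger and arguably cleaner conclusion.
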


\begin{proof}
	The continuity equation can be rewritten with the Lagrangian point of view
	\[
	\dfrac{D}{Dt} \rho_{\ep,\delta} = - \rho_{\ep,\delta} \Div u_{\ep,\delta}
	\]
	where $\dfrac{D}{Dt} = \partial_t + u \cdot \nabla$ denotes the material derivative.
	Using~\eqref{eq:expr_lambdadivu}, we get
	\begin{align*}
	& \dfrac{D}{Dt} \rho_{\ep,\delta} + \rho_{\ep,\delta} \nu_{\ep,\delta}(\rho_{\ep,\delta}) p_{\ep,\delta}(\rho_{\ep,\delta})\\
	& \quad =  - \rho_{\ep,\delta} \nu_{\ep,\delta}(\rho_{\ep,\delta}) S_{\ep,\delta}
	- \dfrac{\rho_{\ep,\delta}\nu_{\ep,\delta}(\rho_{\ep,\delta})}{|\TT|} \int_{\TT}{ \big((\lambda_{\ep,\delta})(\rho_{\ep,\delta}) \Div u_{\ep,\delta} - p_{\ep,\delta}(\rho_{\ep,\delta}) \big) }
	\end{align*}
	and thus by integration in time along the trajectory $X_t(x_0)$ starting at time $t=0$ from $x_0$,
	\begin{align*}
	\rho_{\ep,\delta}(t,X_t(x_0))
	& \leq \rho^0_\ep(x_0) +
	\|\rho_{\ep,\delta}\nu_{\ep,\delta}(\rho_{\ep,\delta})\|_{L^\infty_{t,x}}\|S_{\varepsilon,\delta}\|_{L^1_tL^\infty_{x}} \\
	& \quad + \dfrac{\|\rho_{\ep,\delta}\nu_{\ep,\delta}(\rho_{\ep,\delta})\|_{L^\infty_{t,x}}}{|\TT|} \left\|\int_{\TT}{ \big((\lambda_{\ep,\delta})(\rho_{\ep,\delta}) \Div u_{\ep,\delta} - p_{\ep,\delta}(\rho_{\ep,\delta}) \big) }\right\|_{L^1_t} \\
	& \leq 1 + C(\ep),
	\end{align*}
	where we have used the fact that $\rho_\ep^0$ is bounded by $1$ and $\beta > 1$, so that
	\begin{align*} 
	\rho_{\ep,\delta}\nu_{\ep,\delta}(\rho_{\ep,\delta}) 
	& = \dfrac{\rho_{\ep,\delta}}{2\mu + \lambda_{\ep,\delta}(\rho_{\ep,\delta})} \\
	& \leq  \dfrac{M^0}{2\mu}\mathbf{1}_{\{\rho_{\ep,\delta} < M^0\}} + C(\ep) \rho_{\ep,\delta}^{1-\beta}\mathbf{1}_{\{\rho_{\ep,\delta} \geq M^0\}} \\
	& \leq  C(\ep).
	\end{align*}
	Moreover, thanks to~\eqref{hyp:f}, we ensure that
	\[
	\|S_{\varepsilon,\delta}\|_{L^\infty_x} = \|(-\Delta)^{-1}\Div (f-ru_{\ep,\delta})\|_{L^\infty_x} \leq C \big(\|f\|_{L^{\bar{q}}_x}+ \|u_{\ep,\delta}\|_{L^6_x} \big).
	\]
	since $\bar{q} > 3$.
	This achieves the proof of Proposition~\ref{prop:bound_rhodelta}.
\end{proof}

We now improve a little bit the estimate on the pressure. 
That will ensure that its weak limit is more regular than a measure.
\begin{lem}
	The sequence $(p_{\ep,\delta}(\rho_{\ep,\delta}))_\delta$ is equi-integrable.
\end{lem}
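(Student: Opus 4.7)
The plan is to upgrade the uniform $L^1$ bound on $p_{\ep,\delta}$ established in the previous lemma to a uniform bound in $L^{1+\eta}((0,T)\times \TT)$ for some $\eta>0$; equi-integrability then follows from the de la Vall\'ee--Poussin criterion applied with $\Phi(t)=t^{1+\eta}$. The central intermediate step will be to show that $p_{\ep,\delta}^2/(2\mu+\lambda_{\ep,\delta})$ is bounded in $L^1((0,T)\times\TT)$ uniformly in $\delta$.

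Starting from the effective flux identity $p_{\ep,\delta}=(2\mu+\lambda_{\ep,\delta})\Div u_{\ep,\delta}-F_{\ep,\delta}$ with $F_{\ep,\delta}=S_{\ep,\delta}+<F_{\ep,\delta}>(t)$, I would square and divide by $2\mu+\lambda_{\ep,\delta}$, obtaining
\[
\int_0^T\!\!\int_{\TT}\frac{p_{\ep,\delta}^2}{2\mu+\lambda_{\ep,\delta}}
=\int_0^T\!\!\int_{\TT}(2\mu+\lambda_{\ep,\delta})(\Div u_{\ep,\delta})^2
-2\int_0^T\!\!\int_{\TT}F_{\ep,\delta}\Div u_{\ep,\delta}
+\int_0^T\!\!\int_{\TT}\frac{F_{\ep,\delta}^2}{2\mu+\lambda_{\ep,\delta}}.
\]
The first term is controlled by the energy inequality. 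The second reduces to $\int\int S_{\ep,\delta}\Div u_{\ep,\delta}$, since $<F_{\ep,\delta}>(t)$ is constant in $x$ and $\int_{\TT}\Div u_{\ep,\delta}\,dx=0$, and is bounded by Cauchy--Schwarz using $S_{\ep,\delta}\in L^2((0,T)\times\TT)$ and $\Div u_{\ep,\delta}\in L^2$.

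The main obstacle is bounding the third term, for which it suffices to prove $<F_{\ep,\delta}>\in L^2(0,T)$ uniformly in $\delta$. From the representation $<F_{\ep,\delta}>=-<(p_{\ep,\delta}+S_{\ep,\delta})\nu_{\ep,\delta}>/<\nu_{\ep,\delta}>$ and the lower bound $<\nu_{\ep,\delta}>\geq C_2$ from Lemma~\ref{lem:nu_delta}, I would bound $|<F_{\ep,\delta}>|\leq C(<p_{\ep,\delta}\nu_{\ep,\delta}>+<|S_{\ep,\delta}|\nu_{\ep,\delta}>)$. The pointwise splitting already used in the proof of the $L^1$ bound on $p_{\ep,\delta}$ gives $p_{\ep,\delta}\nu_{\ep,\delta}\leq p_{\ep,\delta}/\lambda_{\ep,\delta}\leq C(\ep)(H_{\ep,\delta}(\rho_{\ep,\delta})+\rho_{\ep,\delta})$ on the singular regime (plus a uniform bound elsewhere), which combined with $\sup_t\int H_{\ep,\delta}\leq C$ and conservation of mass yields $<p_{\ep,\delta}\nu_{\ep,\delta}>\in L^\infty(0,T)$. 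The second piece lies in $L^2(0,T)$ since $\nu_{\ep,\delta}\leq 1/(2\mu)$ and $S_{\ep,\delta}\in L^2(0,T;L^\infty(\TT))$ thanks to $\bar{q}>3$.

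Finally, H\"older's inequality with conjugate exponents $2/(1+\eta)$ and $2/(1-\eta)$ gives
\[
\int_0^T\!\!\int_{\TT} p_{\ep,\delta}^{1+\eta}
\leq \bigg(\int_0^T\!\!\int_{\TT}\frac{p_{\ep,\delta}^2}{2\mu+\lambda_{\ep,\delta}}\bigg)^{\!(1+\eta)/2}
\bigg(\int_0^T\!\!\int_{\TT}(2\mu+\lambda_{\ep,\delta})^{(1+\eta)/(1-\eta)}\bigg)^{\!(1-\eta)/2}.
\]
Selecting $\eta=(\gamma-\beta)/(\gamma+\beta)\in(0,1)$, available precisely because $\gamma>\beta$, makes $(1+\eta)/(1-\eta)=\gamma/\beta$, so that the second factor is bounded by the uniform estimate $\lambda_{\ep,\delta}\in L^{\gamma/\beta}((0,T)\times\TT)$ from the previous lemma. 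This yields the desired $L^{1+\eta}$ bound and hence equi-integrability. The decisive use of the dominant pressure regime $\gamma>\beta$ is in the $L^2_t$ estimate on $<F_{\ep,\delta}>$, through the comparison $p_{\ep,\delta}/\lambda_{\ep,\delta}\lesssim H_{\ep,\delta}+\rho_{\ep,\delta}$.
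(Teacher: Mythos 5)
Your proof is correct and proceeds by a genuinely different route from the paper. The paper's argument is a Bogovskii-type improved pressure estimate: it inserts the test function $\psi = \nabla\Delta^{-1}\bigl(H_{\ep,\delta}^\alpha - <H_{\ep,\delta}^\alpha>\bigr)$ with $\alpha = (\gamma-\beta)/(\gamma-1)$ into the weak momentum equation, absorbs the $\lambda_{\ep,\delta} H_{\ep,\delta}^\alpha |\Div u_{\ep,\delta}|$ contribution into the left-hand side by the pointwise comparison $H_{\ep,\delta}^{2\alpha}\lambda_{\ep,\delta} \lesssim \ep^\alpha H_{\ep,\delta}^\alpha p_{\ep,\delta}$ (valid because $\gamma>\beta$, provided $\ep$ is small), and deduces $H_{\ep,\delta}^\alpha p_{\ep,\delta}\in L^1$, from which equi-integrability follows by De~La~Vall\'ee--Poussin. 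Your argument instead squares the pointwise effective-flux identity, divides by $2\mu+\lambda_{\ep,\delta}$, and recovers the weighted estimate $p_{\ep,\delta}^2/(2\mu+\lambda_{\ep,\delta})\in L^1$ directly from the energy inequality together with $S_{\ep,\delta}\in L^2_{t,x}$ and $<F_{\ep,\delta}>\in L^2(0,T)$ — the latter coming from the same pointwise comparison $p_{\ep,\delta}/\lambda_{\ep,\delta}\lesssim H_{\ep,\delta}+\rho_{\ep,\delta}$ used in the $L^1$ bound, so again $\gamma>\beta$ is the decisive hypothesis. Your final H\"older step converts this to a genuine $L^{1+\eta}$ bound with $\eta = (\gamma-\beta)/(\gamma+\beta)$, which is somewhat cleaner than the paper's weighted $L^1$ conclusion and does not require $\ep$ small. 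What the paper's approach buys is robustness in settings where the effective flux cannot be globally inverted against $\Div u$ (e.g.\ the full Navier--Stokes system with inertia), whereas your approach exploits the Brinkman structure — $F_{\ep,\delta}=S_{\ep,\delta}+<F_{\ep,\delta}>$ with $<F_{\ep,\delta}>$ spatially constant and controllable in $L^2_t$ — in a way that is specific to this semi-stationary model but gives a more transparent and quantitative conclusion here. Both routes are valid in the regime $\gamma>\beta>1$.
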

\begin{proof}
	Let us take in~\eqref{eq:weak-delta-div-mom} the test function 
	\[
	\psi = \nabla \Delta^{-1}\big((H_{\ep,\delta}(\rho_{\ep,\delta}))^\alpha - <(H_{\ep,\delta}(\rho_{\ep,\delta}))^\alpha> \big) \quad \text{with} \quad \alpha = \dfrac{\gamma - \beta}{\gamma-1} \in (0,1)
	\]
	where for a periodic function $g$ such that $<g>=0$, $h=(-\Delta)^{-1}g$ is the unique periodic solution of
	\[
	-\Delta h = g\quad \text{in}~~ \TT, \quad <h> = 0. 
	\]
	We have
	\begin{align*}
	& \int_0^T\int_{\TT}{p_{\ep,\delta}(\rho_{\ep,\delta}) \left[\big( H_{\ep,\delta}(\rho_{\ep,\delta}) \big)^{\alpha} - <(H_{\ep,\delta}(\rho_{\ep,\delta}))^\alpha> \right] } \\
	& \quad = \int_0^T\int_{\TT}{\Big(\frac{3}{2} \mu + \lambda_{\ep,\delta}(\rho_{\ep,\delta})\Big)\Div u_{\ep,\delta} \left[\big( H_{\ep,\delta}(\rho_{\ep,\delta}) \big)^{\alpha} - <(H_{\ep,\delta}(\rho_{\ep,\delta}))^\alpha> \right]} \\
	& \qquad  - \int_0^T\int_{\TT}{\big(f-ru_{\ep,\delta}\big) \cdot \psi }
	\end{align*}
	and using the controls resulting from the energy inequality, we obtain
	\begin{align*}
	& \int_0^T\int_{\TT}{p_{\ep,\delta}(\rho_{\ep,\delta}) \left[\big( H_{\ep,\delta}(\rho_{\ep,\delta}) \big)^{\alpha} - <(H_{\ep,\delta}(\rho_{\ep,\delta}))^\alpha> \right] } \\
	& \qquad \leq C + \int_0^T\int_{\TT}{\big(H_{\ep,\delta}(\rho_{\ep,\delta}) \big)^{\alpha}\lambda_{\ep,\delta}(\rho_{\ep,\delta})|\Div(u_{\ep,\delta})| }.
	\end{align*}
	Since $\alpha < 1$ and $H_{\ep,\delta}(\rho_{\ep,\delta})$ is bounded in $L^\infty L^1$,
	we control 
	\[
	\left|\int_0^T\int_{\TT}{p_{\ep,\delta}(\rho_{\ep,\delta}) <(H_{\ep,\delta}(\rho_{\ep,\delta}))^\alpha> }\right|
	\leq \|H_{\ep,\delta}(\rho_{\ep,\delta})\|_{L^\infty L^1}^{\alpha} \|p_{\ep,\delta}(\rho_{\ep,\delta})\|_{L^1L^1}
	\]
	and we get then
	\begin{align*}
	\int_0^T\int_{\TT}{p_{\ep,\delta}(\rho_{\ep,\delta}) \big( H_{\ep,\delta}(\rho_{\ep,\delta}) \big)^{\alpha}}
	& \leq C + \|H_{\ep,\delta}(\rho_{\ep,\delta})\|_{L^\infty L^1}^{\alpha} \|p_{\ep,\delta}(\rho_{\ep,\delta})\|_{L^1L^1} \\
	& \qquad + \int_0^T\int_{\TT}{\big(H_{\ep,\delta}(\rho_{\ep,\delta}) \big)^{\alpha}\lambda_{\ep,\delta}(\rho_{\ep,\delta})|\Div(u_{\ep,\delta})| } \\
	& \leq C + \int_0^T\int_{\TT}{\big(H_{\ep,\delta}(\rho_{\ep,\delta}) \big)^{2\alpha}\lambda_{\ep,\delta}(\rho_{\ep,\delta}) \mathbf{1}_{\{ \rho_{\ep,\delta} \geq M^0 \}} }.
	\end{align*} 
	In the right-hand side we have
	\begin{align*}
	&\big(H_{\ep,\delta}(\rho_{\ep,\delta})\big)^{2\alpha}\lambda_{\ep,\delta}(\rho_{\ep,\delta}) \mathbf{1}_{\{ \rho_{\ep,\delta} \geq M^0 \}}\\
	& \quad = \dfrac{1}{(\gamma-1)^{2\alpha}}\dfrac{\ep^{2\alpha + 1}}{(1-\rho_{\ep,\delta})^{2\alpha (\gamma-1) + \beta}} \ \rho_{\ep,\delta}^{2\alpha\gamma + \beta} \mathbf{1}_{\{ \rho_{\ep,\delta} \geq M^0 \}} \\
	& \quad = \dfrac{1}{(\gamma-1)^{\alpha}}\left(H_{\ep,\delta}(\rho_{\ep,\delta})\right)^{\alpha} p_{\ep,\delta}(\rho_{\ep,\delta}) \ \ep^\alpha \dfrac{\rho_{\ep,\delta}^{\alpha \gamma + \beta}}{(1-\rho_{\ep,\delta})^{\alpha(\gamma-1)+\beta-\gamma}} \mathbf{1}_{\{ \rho_{\ep,\delta} \geq M^0 \}}.
	\end{align*}
	For $\alpha = \dfrac{\gamma - \beta}{\gamma-1}$, using the $L^\infty$ bound on $\rho_{\ep,\delta}$, we obtain
	\begin{align*}
	&\big(H_{\ep,\delta}(\rho_{\ep,\delta})\big)^{2\alpha}\lambda_{\ep,\delta}(\rho_{\ep,\delta}) \mathbf{1}_{\{ \rho_{\ep,\delta} \geq M^0 \}}
	< \ C\ep^\alpha \left(H_{\ep,\delta}(\rho_{\ep,\delta})\right)^{\alpha} p_{\ep,\delta}(\rho_{\ep,\delta}) 
	\end{align*}
	with $\ep< \ep^0$ small enough, so that this term is absorbed in the left-hand side of the previous inequality.
	Finally
	\begin{equation}\label{eq:add_control_p_delta}
	\left(H_{\ep,\delta}(\rho_{\ep,\delta})\right)^{\alpha} p_{\ep,\delta}(\rho_{\ep,\delta}) \quad \text{is bounded in} \quad L^1\big((0,T)\times \TT\big).
	\end{equation}	
	Thanks to the De La Vall\'ee-Poussin criterion (see~\cite{feireisl2009} Theorem 0.8), we deduce the equi-integrability of $(p_{\ep,\delta}(\rho_{\ep,\delta}))_\delta$.
\end{proof}

%%%%%%%%%%%%%%%%%%%%%%%%%%%%%%%%%%%%%%%%%%%%%%%%%

\subsubsection{Limit $\delta \rightarrow 0$}{\label{sec:delta1}}

\paragraph{First convergence results}
Thanks to the estimates we have just derived, there exists a limit density $\rho_\ep$ such that
\begin{equation}
\rho_{\ep,\delta} \rightharpoonup \rho_\ep \quad \text{weakly-* in} ~ L^\infty\big((0,T)\times \TT \big)
\end{equation}
and, passing to the limit $\delta \rightarrow 0$ in~\eqref{eq:est-rhodelta} we get
\begin{equation}\label{eq:bound_rho_ep}
0 \leq \rho_\ep(t,x) < 1 \quad \text{a.e.} \quad (t,x) \in [0,T]\times \TT.
\end{equation}
In addition, there exists a limit velocity $u_\ep$ such that
\begin{equation}
u_{\ep,\delta} \rightharpoonup u_\ep \quad \text{weakly in} \quad L^2(0,T;(H^1(\TT))^3)
\end{equation}
and, due to the continuity equation, we have
\begin{equation}
\rho_{\ep,\delta}  \rightarrow \rho_\ep \quad \text{in} \quad \mathcal{C}_{\text{weak}}([0,T],L^r(\TT)) \quad \forall\, r \in [1,+\infty).
\end{equation}
To identify the weak limit of the nonlinear term $\rho_\delta u_\delta$ we use the next compensated-compactness Lemma.
\begin{lem}[\cite{lions1998} Lemma 5.1]\label{lem:compensated_compactness}
	Let $(g_n)$, $(h_n)$ be two sequences converging respectively to $g$, $h$ in $L^{r_1}(0,T;L^{q_1}(\TT))$ and $L^{r_2}(0,T;L^{q_2}(\TT))$ where $1\leq r_1,r_2\leq \infty$ and $\frac{1}{r_1} + \frac{1}{r_2} = \frac{1}{q_1} + \frac{1}{q_2}$. Assume in addition that
	\begin{enumerate}
		\item $\partial_t g_n$ is bounded in $L^1(0,T;W^{-m,1}(\TT))$ for some $m$ independent of $n$;
		\item $\|h_n\|_{L^1_tH^s_x}$ is bounded for some $s >0$.
	\end{enumerate}
	Then $g_n h_n$ converges to $gh$ weakly in $\mathcal{D}'((0,T)\times \TT)$.
\end{lem}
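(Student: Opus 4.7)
The plan is to decouple the two hypotheses---time regularity of $g_n$ (hypothesis (i)) versus spatial regularity of $h_n$ (hypothesis (ii))---by a spatial mollification, since neither sequence on its own is relatively compact in the Lebesgue space in which it converges. (Here the convergences $g_n \to g$ and $h_n \to h$ are to be read as weak/weak-$\ast$ convergence, the only nontrivial case.) Let $\omega_\eta$ be a standard mollifier on $\TT$, set $g_n^\eta := \omega_\eta \ast_x g_n$, and for a test function $\varphi \in \mathcal{C}^\infty_c((0,T)\times\TT)$ decompose
\begin{equation*}
\langle g_n h_n, \varphi \rangle = \langle g_n^\eta h_n, \varphi \rangle + \langle (g_n - g_n^\eta) h_n, \varphi \rangle.
\end{equation*}
I would pass to the limit $n \to \infty$ in the first piece at each fixed $\eta$, then send $\eta \to 0$ after controlling the error uniformly in $n$.

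For the regularized piece, mollification in $x$ gives a uniform-in-$n$ bound $\|g_n^\eta\|_{L^{r_1}(0,T; W^{k,q_1}(\TT))} \leq C_\eta$ for every integer $k$, while the commutativity of $\partial_t$ with $\omega_\eta \ast_x$ combined with hypothesis (i) gives $\|\partial_t g_n^\eta\|_{L^1(0,T; \mathcal{C}^j(\TT))} \leq C_\eta$ for every $j$. A standard Aubin--Lions argument then provides strong convergence $g_n^\eta \to g^\eta := \omega_\eta \ast_x g$ in $L^{r_1}(0,T; L^{q_1}(\TT))$. Combined with the weak convergence of $h_n$ in $L^{r_2}(0,T; L^{q_2}(\TT))$, the classical strong-weak pairing---whose integrability is ensured by the H\"older compatibility $1/r_1 + 1/r_2 = 1/q_1 + 1/q_2$---delivers $\langle g_n^\eta h_n, \varphi \rangle \to \langle g^\eta h, \varphi \rangle$ as $n \to \infty$.

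For the error term, I would transfer the mollifier onto $h_n \varphi$ by symmetry,
\begin{equation*}
\langle (g_n - g_n^\eta) h_n, \varphi \rangle = \int_0^T\!\!\int_{\TT} g_n\, \bigl(h_n \varphi - \omega_\eta \ast_x (h_n \varphi)\bigr)\, dx\, dt,
\end{equation*}
and invoke a quantitative mollification estimate of the form $\|w - \omega_\eta \ast w\|_{L^{p}_x} \leq C \eta^{s'} \|w\|_{H^s_x}$, with $p$ and $s' \in (0,s]$ chosen through fractional Sobolev embedding on $\TT$ so as to be dual to $L^{q_1}_x$. Plugged into hypothesis (ii) on $h_n$ and the $L^{r_1}(L^{q_1})$ bound on $g_n$, this yields an error bound of order $C\eta^{s'}$ uniform in $n$. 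Sending first $n \to \infty$ and then $\eta \to 0$, and using that $g^\eta \to g$ strongly in $L^{r_1}(L^{q_1})$ as $\eta \to 0$, one concludes $\langle g_n h_n, \varphi \rangle \to \langle g h, \varphi \rangle$.

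The main obstacle will be this last error estimate: one must carefully align the integrability exponents so that a positive mollification rate $\eta^{s'}$ is actually available when paired with $g_n \in L^{r_1}(L^{q_1})$. If $q_1'$ lies below the critical fractional Sobolev exponent associated with $s$, a direct embedding works; otherwise one would split $h_n$ via a Littlewood--Paley decomposition, controlling the low frequencies through the weak convergence of $h_n$ and the high-frequency tail through the $H^s$ bound in a suitable dual norm. A secondary technical point is the case $r_1 = \infty$ in the Aubin--Lions step, for which one would substitute uniform equicontinuity in time with values in $L^{q_1}(\TT)$ endowed with its weak topology, and then test against the compactly supported $\varphi$.
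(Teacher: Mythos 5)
The paper does not prove this lemma: it is stated as a direct citation of Lemma~5.1 from Lions' book \cite{lions1998}, so there is no internal argument to compare you against. Judged on its own terms, your plan — mollify $g_n$ in space, use Aubin--Lions--Simon on the regularized sequence $g_n^\eta$ (whose spatial derivatives and time derivative are now both under control for each fixed $\eta$), pass to the limit via strong $\times$ weak pairing, and transfer the mollifier onto $h_n\varphi$ to estimate the remainder — is the standard strategy for this kind of div--curl/compensated-compactness statement and is in the spirit of the cited source. The identification $g_n^\eta \to g^\eta$ strongly in $L^{r_1}(L^{q_1})$ is correct for $r_1<\infty$ (and your remark about treating $r_1=\infty$ separately, e.g.\ via $C([0,T];w\text{-}L^{q_1})$, is the right instinct), and the transfer identity $\int(g_n-\omega_\eta*g_n)h_n\varphi=\int g_n\bigl(h_n\varphi-\omega_\eta*(h_n\varphi)\bigr)$ is valid for an even mollifier.

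The one step that is not actually carried out is the error estimate, and this is where a careful reader would want more. You correctly flag it as ``the main obstacle,'' but the interpolation you sketch is not automatic. To bound $\bigl|\int g_n\bigl(h_n\varphi-\omega_\eta*(h_n\varphi)\bigr)\bigr|$ uniformly in $n$ you need $\|h_n\varphi-\omega_\eta*(h_n\varphi)\|_{L^{r_1'}_tL^{q_1'}_x}=O(\eta^{\sigma})$ with $\sigma>0$; if you interpolate in time between the $L^1_tH^s_x$ bound and the $L^{r_2}_tL^{q_2}_x$ bound, the interpolation parameter degenerates to $0$ exactly when $1/r_1+1/r_2=1$, which is precisely the borderline configuration that arises in this paper's own use of the lemma (e.g.\ $h_n=F_{\ep,\delta}$ bounded only in $L^1_tH^1_x$, so $r_2=1$ and hence $r_1=\infty$). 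In that regime the time interpolation gives nothing; what saves the argument is your other fallback, the direct fractional Sobolev estimate $\|w-\omega_\eta*w\|_{L^{q_1'}_x}\lesssim\eta^{\sigma}\|w\|_{H^s_x}$, which requires $q_1'$ strictly below the critical exponent $2d/(d-2s)$ and then integrates trivially against $\|h_n\varphi\|_{L^1_tH^s_x}$. You mention this only conditionally (``if $q_1'$ lies below the critical exponent''), so a complete proof would have to pin down precisely which of the two mechanisms is available under the stated exponent relation $1/r_1+1/r_2=1/q_1+1/q_2$, and show that at least one always applies (or else acknowledge that an implicit strict inequality such as $q_1'<2d/(d-2s)$ is being used, as is harmless for the paper's applications). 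As it stands, the remainder estimate is a correct plan but not a proof.
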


\bigskip
We apply the result to $g_\ep = \rho_\ep$, $h_\ep = u_\ep$: we ensure the control of $\partial_t \rho_\ep$ in $L^2\big(0,T;H^{-1}(\TT)\big)$ from the continuity equation, while $\nabla u_{\ep,\delta}$ is bounded in $L^2\big((0,T)\times \TT\big)$ thanks to the energy inequality.
Hence
\begin{equation}
\rho_{\ep,\delta} u_{\ep,\delta} \rightharpoonup \rho_\ep u_\ep  \quad \text{weakly-* in} \quad L^\infty(0,T;L^6(\TT)).
\end{equation}
With the estimates on the pressure we deduce
\begin{equation}
p_{\ep,\delta}(\rho_{\ep,\delta}) \rightharpoonup \ov{p_\ep(\rho)} \quad \text{weakly in} \quad L^{1}\big((0,T) \times \TT\big),
\end{equation}
where $\overline{h}$ denotes the weak limit of the sequence $(h_\delta)_\delta$.
Our next goal is to get the strong convergence of the density $\rho_{\ep,\delta}$ in order to identify the limit of the pressure and bulk viscosity which are non-linear functions of the density.
\paragraph{Convergence a.e. of the density.}
	Thanks to the bounds on $\rho_{\ep,\delta}, u_{\ep,\delta}$, we can pass to the limit in the sense of distributions in the renormalized continuity equation
	\[
	\partial_t \rho_{\ep,\delta}^2 + \Div\big( \rho_{\ep,\delta}^2 u_{\ep,\delta} \big) 
	= - \rho_{\ep,\delta}^2 \Div u_{\ep,\delta}
	\]
	which reads at the limit 
	\[
	\partial_t \ov{\rho_{\ep}^2} + \Div\big( \ov{\rho_{\ep}^2} u_{\ep} \big) 
	= - \ov{\rho_{\ep}^2 \Div u_{\ep}}.
	\]
	On the other hand, the limit density $\rho_\ep \in L^\infty((0,T)\times \TT)$ satisfies the renormalized continuity equation
	\[
	\partial_t \rho_{\ep}^2 + \Div\big( \rho_{\ep}^2 u_{\ep} \big) 
	= - \rho_{\ep}^2 \Div u_{\ep}.
	\]
	Defining $\Psi := \ov{\rho_\ep^2} - \rho_\ep^2 \geq 0$, we have then
	\begin{equation}
	\partial_t \Psi + \Div(\Psi u) = \rho_\ep^2 \Div u_\ep - \ov{\rho_\ep^2 \Div u_\ep} \qquad \text{in} \quad \mathcal{D}'.
	\end{equation}
	By replacing $\Div u_\ep$ by its expression in terms of effective flux and pressure, the previous equation can be rewritten as
	\begin{align}\label{eq:Psi-0}
	\partial_t \Psi + \Div(\Psi u) 
	& = \rho_\ep^2\, \ov{\nu_\ep (\rho_\ep)  F_\ep  } - 	\ov{\rho_\ep^2 \nu_\ep (\rho_\ep)  F_\ep  } \\
	& \quad + \rho_\ep^2\, \ov{ p_\ep(\rho) \nu_\ep(\rho_\ep) } - \ov{ \rho_\ep^2 p_\ep(\rho) \nu_\ep(\rho_\ep) }\nonumber
	\end{align}
	
	\begin{remark}
		In the classical case of constant bulk and shear viscosities $\mu^0$, $\lambda^0$, the previous equation writes
		\[
		\partial_t \Psi + \Div(\Psi u) 
		= \dfrac{1}{2\mu^0 + \lambda^0} \left[\rho_\ep^2\, \ov{F_\ep  } - 	\ov{\rho_\ep^2 F_\ep  } 
		+ \rho_\ep^2\, \ov{ p(\rho)} - \ov{ \rho_\ep^2 p(\rho)}
		\right] 
		\]
		and one can prove some weak compactness property of the effective flux (see~\cite{lions1998} Chapter 5 or~\cite{novotny2004}).
		This property ensures that
		\[
		\rho_\ep^2\, \ov{F_\ep  } =	\ov{\rho_\ep^2 F_\ep} 
		\]
		so that
		\[
		\partial_t \Psi + \Div(\Psi u) = \dfrac{1}{2\mu^0 + \lambda^0} \left(\rho_\ep^2\, \ov{ p(\rho)} - \ov{ \rho_\ep^2 p(\rho)}\right).
		\]
		In the usual case where the pressure is a monotone (increasing) function, independent of the parameter $\delta$, then (see Lemma 3.35 in~\cite{novotny2004})
		\begin{equation}\label{eq:monotone-press}
		\rho_\ep^2\, \ov{ p(\rho)} \leq \ov{ \rho_\ep^2 p(\rho)} 
		\end{equation}
		and 
		\[
		\partial_t \Psi + \Div(\Psi u) \leq 0.
		\]
		We conclude by an integration in space 
		\[
		\Dt \int_{\TT}\Psi \leq 0.
		\]
		Recall that by the convexity of the functional $s \mapsto s^2$ we have $\Psi \geq 0$.
		Hence, if initially $\Psi(0,\cdot) = 0$, we obtain
		\[
		\Psi = 0 \quad \text{a.e.} ~(t,x).
		\]
		This ensures the strong convergence of $(\rho_\delta)_\delta$.
		Note finally that this calculation has been extended by {\sc Feireisl} in~\cite{feireisl2002} to non-monotone pressure that are increasing only from a critical density. In this case, one controls the part where the pressure is non-monotone in such way that a Gronwall inequality can be applied to recover at the end $\Psi = 0$ a.e..
		We will see below that we will have to use with such kind of arguments to prove the strong convergence of the density in case of density dependent bulk viscosities.
		We refer the reader to~\cite{bresch2018} for recent developments on more general non-monotone pressures.
	\end{remark}	
	
	\bigskip
	Our study is original in two ways: first, we have here to deal with a density dependent bulk viscosity, secondly, the pressure (as well as the bulk viscosity) depends on the parameter of approximation $\delta$.
	We begin with proving some similar weak compactness properties satisfied by the effective flux.
	
	\begin{prop}\label{prop:compactness_F}
		We ensure the two following properties
		\begin{equation}\label{prop:compactness_F_1}
		\ov{\left(\dfrac{\rho^2 F_\ep}{2\mu +\lambda_\ep(\rho)} \right)} = \ov{\left(\dfrac{\rho^2}{2\mu +\lambda_\ep(\rho)}\right)}~ \ov{F_\ep}\qquad \text{in} \quad  \mathcal{D}'
		\end{equation}
		\begin{equation}\label{prop:compactness_F_2}
		\ov{\left(\dfrac{F_\ep}{2\mu +\lambda_\ep(\rho)}\right)} = \ov{\left(\dfrac{1}{2\mu +\lambda_\ep(\rho)}\right)}~ \ov{F_\ep}\qquad \text{in} \quad \mathcal{D}' 
		\end{equation}
		where $\ov{g}$ denotes the weak limit of the sequence $(g_\delta)$.
	\end{prop}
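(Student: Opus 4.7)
The plan is a Lions--Feireisl--type effective-flux compactness argument, adapted to the Brinkman setting (no inertia, density-dependent bulk viscosity, $\delta$-dependent pressure). The crucial structural observation is that, using the identity $\Delta u = \nabla \Div u - \curl\curl u$, the weak momentum equation \eqref{eq:weak-delta-div-mom} rewrites as
\[
-\nabla F_{\ep,\delta} + \mu\, \curl\curl u_{\ep,\delta} + r u_{\ep,\delta} = f \qquad \text{in } \mathcal{D}'((0,T)\times \TT),
\]
so that any test function that is a spatial gradient kills the curl--curl contribution and isolates $F_{\ep,\delta}$ against its divergence.

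Fix $\eta \in \mathcal{D}(0,T)$, $\zeta \in \mathcal{D}(\TT)$, and let $g_\delta$ stand for either of the two sequences $\nu_{\ep,\delta}(\rho_{\ep,\delta})$ or $\rho_{\ep,\delta}^2 \nu_{\ep,\delta}(\rho_{\ep,\delta})$; both are bounded in $L^\infty((0,T)\times\TT)$ and, up to extraction, converge weakly-$*$ to limits denoted $\ov g$. Insert the test function
\[
\psi_\delta(t,x) = \eta(t)\, \nabla \Delta^{-1}\bigl( \zeta(x) g_\delta(t,x) - \langle \zeta g_\delta\rangle(t)\bigr)
\]
into \eqref{eq:weak-delta-div-mom} (after a harmless space-time mollification that is removed at the end). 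Since $\curl \psi_\delta = 0$ and $\Div \psi_\delta = \eta(\zeta g_\delta - \langle \zeta g_\delta\rangle)$, a direct computation yields
\[
\int_0^T\!\!\int_\TT F_{\ep,\delta}\,\eta \zeta g_\delta = |\TT|\int_0^T \eta \langle F_{\ep,\delta}\rangle \langle \zeta g_\delta\rangle + \int_0^T\!\!\int_\TT (f - r u_{\ep,\delta})\cdot \psi_\delta \qquad (\ast_\delta).
\]
Performing the same manipulation on the limit equation $-\nabla \ov F + \mu \curl\curl u_\ep + r u_\ep = f$, tested against $\ov \psi := \eta\, \nabla \Delta^{-1}(\zeta \ov g - \langle \zeta \ov g\rangle)$, delivers the analogous identity
\[
\int_0^T\!\!\int_\TT \ov F\, \eta \zeta \ov g = |\TT|\int_0^T \eta \langle \ov F\rangle \langle \zeta \ov g\rangle + \int_0^T\!\!\int_\TT (f - r u_\ep)\cdot \ov\psi \qquad (\ast\ast).
\]

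It then suffices to pass to the limit $\delta \to 0$ in $(\ast_\delta)$ and match the result with $(\ast\ast)$. The left-hand side of $(\ast_\delta)$ converges to $\int\!\int \ov{F_\ep g}\,\eta\zeta$ by definition of the weak limit. For the right-hand side one needs the strong convergences $\psi_\delta \to \ov \psi$ in $L^2((0,T)\times \TT)$ and $\langle \zeta g_\delta\rangle \to \langle \zeta \ov g\rangle$ in $L^2(0,T)$, to be paired with the weak convergences $u_{\ep,\delta} \rightharpoonup u_\ep$ in $L^2(L^6)$ and $\langle F_{\ep,\delta}\rangle \rightharpoonup \langle \ov F\rangle$ in $L^2(0,T)$. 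Both follow from Aubin--Lions: $g_\delta = b(\rho_{\ep,\delta})$ is uniformly bounded in $L^\infty$, and the renormalized continuity equation \eqref{eq:renormalized-delta} gives
\[
\partial_t g_\delta = -\Div(g_\delta u_{\ep,\delta}) - \bigl(\rho_{\ep,\delta} b'(\rho_{\ep,\delta}) - b(\rho_{\ep,\delta})\bigr) \Div u_{\ep,\delta},
\]
whose first term is bounded in $L^2(W^{-1,6}(\TT))$ (because $g_\delta u_{\ep,\delta}$ is bounded in $L^2(L^6)$) and whose second is bounded in $L^1((0,T)\times\TT)$ via the pointwise inequality $|\rho_{\ep,\delta} b'(\rho_{\ep,\delta})| \le C\, \lambda_{\ep,\delta}/(2\mu+\lambda_{\ep,\delta}) \le C$ combined with the uniform $L^1$ control of $\lambda_{\ep,\delta}\Div u_{\ep,\delta}$ from Lemma~\ref{lambdadivu}. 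Since $\nabla\Delta^{-1}$ is one-derivative smoothing in space, this furnishes the compactness of $\psi_\delta$ in $L^2((0,T)\times\TT)$; comparing the limit of $(\ast_\delta)$ with $(\ast\ast)$ and varying $\eta, \zeta$ gives $\ov{F_\ep g} = \ov F \cdot \ov g$ in $\mathcal{D}'$, which is \eqref{prop:compactness_F_1} for $g = \rho^2 \nu_\ep$ and \eqref{prop:compactness_F_2} for $g = \nu_\ep$.

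The main difficulty lies in the uniform-in-$\delta$ control of the correction $(\rho b'(\rho) - b(\rho))\Div u_{\ep,\delta}$: because $\nu'_{\ep,\delta}$ is proportional to $\lambda'_{\ep,\delta}/(2\mu+\lambda_{\ep,\delta})^2$, this term is potentially singular as $\rho_{\ep,\delta} \uparrow 1-\delta$. Closing the estimate hinges on the structural bound $\lambda_{\ep,\delta}/(2\mu + \lambda_{\ep,\delta}) \le 1$ together with the uniform $L^1$ bound on $\lambda_{\ep,\delta}\Div u_{\ep,\delta}$ supplied by Lemma~\ref{lambdadivu}, which together absorb the apparent singularity of $\nu'_{\ep,\delta}$.
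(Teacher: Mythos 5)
Your proof is correct, and it reaches the same conclusion by a mildly different route. The paper deduces the two identities by applying the abstract compensated-compactness result (Lemma~\ref{lem:compensated_compactness}, i.e.\ Lemma~5.1 of Lions), with $h_{\ep,\delta}=F_{\ep,\delta}$ and $g^i_{\ep,\delta}\in\{\nu_{\ep,\delta}(\rho_{\ep,\delta}),\,\rho_{\ep,\delta}^2\nu_{\ep,\delta}(\rho_{\ep,\delta})\}$: it verifies $(\mathrm{i})$~that $\partial_t g^i_{\ep,\delta}$ is bounded in $L^2(0,T;W^{-1,1})$ via the renormalized continuity equation, and $(\mathrm{ii})$~that $F_{\ep,\delta}=S_{\ep,\delta}+\langle\lambda_{\ep,\delta}\Div u_{\ep,\delta}-p_{\ep,\delta}\rangle$ is bounded in $L^1(0,T;H^1)$. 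You instead unpack the compensated-compactness mechanism by hand in the Lions--Feireisl style: exploiting that the momentum equation reduces to $-\nabla F_{\ep,\delta}+\mu\,\curl\curl u_{\ep,\delta}+ru_{\ep,\delta}=f$, you test against the curl-free function $\eta\,\nabla\Delta^{-1}(\zeta g_\delta - \langle\zeta g_\delta\rangle)$, isolate $\int F_{\ep,\delta}\,\eta\zeta g_\delta$, and then pass to the limit by strong compactness of $\psi_\delta$ and $\langle\zeta g_\delta\rangle$ obtained from Aubin--Lions. Both arguments hinge on the same structural inputs: the uniform-in-$\delta$ $L^\infty$ bound on $(\nu_{\ep,\delta})'_+$ (which is where the singularity of $\lambda_{\ep,\delta}'$ is tamed, either via the paper's direct case analysis or your pointwise $\rho\lambda'_{\ep,\delta}/(2\mu+\lambda_{\ep,\delta})^2\lesssim\lambda_{\ep,\delta}/(2\mu+\lambda_{\ep,\delta})$ observation), the renormalized continuity equation for time compactness of $b(\rho_{\ep,\delta})$, and the Helmholtz-type structure which gives $F_{\ep,\delta}$ a full spatial derivative. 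What the paper's route buys is modularity (the lemma is reused in the identification of $\rho_\delta u_\delta$); what yours buys is self-containment and an explicit display of where the curl part of the momentum balance drops out. The only rough edge in your writeup is the final reference to the $L^1$ control of $\lambda_{\ep,\delta}\Div u_{\ep,\delta}$: once you have the pointwise bound $|\rho b'(\rho)|\le C$ you in fact control $(\rho b'-b)\Div u_{\ep,\delta}$ in $L^2$ directly from $\Div u_{\ep,\delta}\in L^2$, which is the (stronger) estimate the paper records; the $L^1$ route works but is not needed.
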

	
	\begin{proof}
		The proof of these properties follows again from Lemma~\ref{lem:compensated_compactness} with $h_{\ep,\delta} = F_{\ep,\delta}$ and $g_{\ep,\delta}^1 = \rho_{\ep,\delta}^2 \nu_{\ep,\delta}(\rho_{\ep,\delta})$, $g_{\ep,\delta}^2=\nu_{\ep,\delta}(\rho_{\ep,\delta})$.
		Let us check that we control the time derivative of $g_{\ep,\delta}^1$ and $g_{\ep,\delta}^2$ that satisfy the renormalized continuity equations
		\begin{equation}\label{eq:renorm_gi}
		\partial_t g_{\ep,\delta}^i(\rho_{\ep,\delta}) + \Div\big(g_{\ep,\delta}^i(\rho_{\ep,\delta}) u_{\ep,\delta}\big) + \Big((g_{\ep,\delta}^i)'_+(\rho_{\ep,\delta}) \, \rho_{\ep,\delta}-g_{\ep,\delta}^i(\rho_{\ep,\delta}) \Big) \Div u_{\ep,\delta} = 0
		\end{equation}
		with
		\[ (g_{\ep,\delta}^1)'_+(\rho) = 
		\dfrac{2\rho}{2\mu + \lambda_{\ep,\delta}(\rho)}-\dfrac{(\lambda_{\ep,\delta})'_+(\rho)}{(2\mu + \lambda_{\ep,\delta}(\rho))^2}, \qquad (g_{\ep,\delta}^2)'_+(\rho)
		= -\dfrac{(\lambda_{\ep,\delta})'_+(\rho)}{(2\mu + \lambda_{\ep,\delta}(\rho))^2}.\]
		Recall that
		\[ 
		(\lambda_{\ep,\delta})'_+(\rho) = \begin{cases}
		~ \ep \beta \dfrac{\rho^{\beta-1}}{(1-\rho)^{\beta+1}} \quad & \text{if} \quad \rho < 1-\delta \\
		~ \ep \beta \dfrac{\rho^{\beta-1}}{\delta^\beta}\quad & \text{if} \quad \rho \geq 1-\delta \end{cases}
		\] 
		hence 
		\begin{align*}
		\dfrac{(\lambda_{\ep,\delta})'_+(\rho)}{(2\mu + \lambda_{\ep,\delta}(\rho))^2}
		& \leq  C \mathbf{1}_{\{ \rho_{\ep,\delta} < M^0 \}}
		+ \dfrac{(\lambda_{\ep,\delta})'_+(\rho)}{(\lambda_{\ep,\delta}(\rho))^2}\mathbf{1}_{\{ M^0 \leq \rho_{\ep,\delta} < 1-\delta \}}
		+ \dfrac{(\lambda_{\ep,\delta})'_+(\rho)}{(\lambda_{\ep,\delta}(\rho))^2}\mathbf{1}_{\{ \rho_{\ep,\delta} \geq 1-\delta \}} \\
		& \leq C + \dfrac{C}{\ep}(1-\rho_{\ep,\delta})^{\beta -1}\mathbf{1}_{\{ M^0 \leq \rho_{\ep,\delta} < 1-\delta \}}
		+ \dfrac{C}{\ep}\delta^{\beta}\mathbf{1}_{\{ \rho_{\ep,\delta} \geq 1-\delta \}}\\
		& \leq C(\ep)       
		\end{align*}
		and thus
		$g^i_{\ep,\delta}$, $(g^i_{\ep,\delta})'$ are bounded in $L^2$ uniformly with respect to $\delta$ (but not wrt $\ep$).
		By a Cauchy-Schwarz inequality
		\begin{align*}
		& \left\|\Big((g_{\ep,\delta}^i)'_+(\rho_{\ep,\delta}) \, \rho_{\ep,\delta}-g_{\ep,\delta}^i(\rho_{\ep,\delta}) \Big) \Div u_{\ep,\delta} \right\|_{L^1L^1} \\
		& \quad \leq \|(g_{\ep,\delta}^i)'_+(\rho_{\ep,\delta}) \, \rho_{\ep,\delta}-g_{\ep,\delta}^i(\rho_{\ep,\delta}) \|_{L^2L^2} \|\Div u_{\ep,\delta}\|_{L^2L^2} \leq C  
		\end{align*} 
		and, coming back to Equation~\eqref{eq:renorm_gi}, we get
		\begin{equation*}
		\partial_t g_{\ep,\delta}^1, \, \partial_t g_{\ep,\delta}^2 \quad \text{bounded in} \quad L^2(0,T;W^{-1,1}(\TT)).
		\end{equation*}
		On the other hand, we have
		\[
		h_{\ep,\delta} = S_{\ep,\delta} + <\lambda_{\ep,\delta}(\rho_{\ep,\delta})\Div u_{\ep,\delta} - p_{\ep,\delta}>
		\]
		which is bounded in $L^1(0,T;H^1(\TT))$ since
		\[
		\|\nabla S_{\ep,\delta}\|_{L^2L^2} \leq C \big(\|f\|_{L^2L^{\bar{q}}}+ \|u_{\ep,\delta}\|_{L^2H^1}\big).
		\]
		Applying Lemma~\ref{lem:compensated_compactness} we arrive finally  at~\eqref{prop:compactness_F_1}--\eqref{prop:compactness_F_2}.
	\end{proof}

	\bigskip
	Thanks to these properties, Equation~\eqref{eq:Psi-0} rewrites
	\begin{align}\label{eq:Psi}
	\partial_t \Psi + \Div(\Psi u) 
	& = \rho_\ep^2\, \ov{\nu_\ep (\rho_\ep)  F_\ep  } - 	\ov{\rho_\ep^2 \nu_\ep (\rho_\ep)  F_\ep  } \nonumber\\
	& \quad + \rho_\ep^2\, \ov{ p_\ep(\rho) \nu_\ep(\rho_\ep) } - \ov{ \rho_\ep^2 p_\ep(\rho) \nu_\ep(\rho_\ep) }\nonumber \\
	& = \ov{ F_\ep } \, \left( \rho_\ep^2 \,\ov{\nu_\ep(\rho_\ep)} - \ov{\rho_\ep^2 \nu_\ep(\rho_\ep)} \right) \nonumber \\
	& \quad + \rho_\ep^2\, \ov{ p_\ep(\rho) \nu_\ep(\rho_\ep) } - \ov{ \rho_\ep^2 p_\ep(\rho) \nu_\ep(\rho_\ep) }.
	\end{align}
	Let $b_{\ep,\delta}(s) := p_{\ep,\delta}(s) \nu_{\ep,\delta}(s) = \dfrac{p_{\ep,\delta}(s)}{2\mu +\lambda_{\ep,\delta}(s)}$. 
	As we said before, our constitutive laws depend on the parameter $\delta$ which prevents us from writing directly~\eqref{eq:monotone-press} on $b_{\ep,\delta}$.
	Nevertheless, for all $\delta < \delta^0$, $b_{\ep,\delta^0}(\cdot) \leq b_{\ep,\delta}(\cdot)$ then
	\begin{align*}
	%& \rho_\ep^2\, \ov{ p_\ep(\rho) \nu_\ep(\rho_\ep) } - \ov{ \rho_\ep^2 p_\ep(\rho) \nu_\ep(\rho_\ep) } \\
	& \rho_\ep^2 \ \ov{\left( \dfrac{p_\ep(\rho_\ep)}{2\mu +\lambda_\ep(\rho_\ep)} \right)} - \liminf_{\delta \rightarrow 0} \dfrac{\rho_{\ep,\delta}^2\, p_{\ep,\delta}(\rho_{\ep,\delta})}{2\mu +\lambda_{\ep,\delta}(\rho_{\ep,\delta})}\\
	& \leq \rho_\ep^2 \  \ov{\left( \dfrac{p_\ep(\rho_\ep)}{2\mu +\lambda_\ep(\rho_\ep)} \right)} - \liminf_{\delta \rightarrow 0} \dfrac{\rho_{\ep,\delta}^2\, p_{\ep,\delta^0}(\rho_{\ep,\delta})}{2\mu +\lambda_{\ep,\delta^0}(\rho_{\ep,\delta})}.
	\end{align*}
	Since moreover $\gamma > \beta$, the function $b_{\ep,\delta^0}$ is increasing for any $\delta^0 > 0$ and it follows that (see for instance Lemma 3.35 from~\cite{novotny2004})
	\begin{align*}
	& \rho_\ep^2 \ \ov{\left( \dfrac{p_\ep(\rho_\ep)}{2\mu +\lambda_\ep(\rho_\ep)} \right)} - \liminf_{\delta \rightarrow 0} \dfrac{\rho_{\ep,\delta}^2\, p_{\ep,\delta}(\rho_{\ep,\delta})}{2\mu +\lambda_{\ep,\delta}(\rho_{\ep,\delta})}\\
	& \leq \rho_\ep^2  \left[ \ov{\left( \dfrac{p_\ep(\rho_\ep)}{2\mu +\lambda_\ep(\rho_\ep)} \right)} -  \ov{\left(\dfrac{ p_{\ep,\delta^0}(\rho_{\ep})}{2\mu +\lambda_{\ep,\delta^0}(\rho_{\ep})}\right)}\right] = 0
	\end{align*}
	where the last equality holds due to the strong convergence in $L^1$ of $\ov{b_{\ep,\delta^0}(\rho)}$ to $\ov{b_{\ep}(\rho)}$ as $\delta^0 \rightarrow 0$ (this follows from the equi-integrability of the
	singular pressure and then from the equi-integrability of $b_\delta$).
	We get then by integration in space
	\begin{align}\label{eq:Psi2}
	\Dt \int_{\TT}{\Psi}~ \leq ~ \int_{\TT}{ |\ov{ F_\ep }| \left| \rho_\ep^2 \,\ov{\nu_\ep(\rho_\ep)} - \ov{\rho_\ep^2 \nu_\ep(\rho_\ep)} \right|} .
	\end{align}
	with 
	\begin{align*}
	\Big|\rho_\ep^2 \nu_{\ep,\delta}(\rho_{\ep,\delta}) - \rho_{\ep,\delta}^2 \nu_{\ep,\delta}(\rho_{\ep,\delta})\Big|
	& =   \nu_{\ep,\delta}(\rho_{\ep,\delta}) \ \big|\rho_{\ep}^2  - \rho_{\ep,\delta}^2 \big| 
	\leq C \ \big|\rho_{\ep}^2  - \rho_{\ep,\delta}^2 \big|  
	\end{align*}
	thanks to Lemma~\ref{lem:nu_delta}.
	Letting $\delta \rightarrow 0$ we get
	\begin{align*}
	\int_{\TT}{ |\ov{ F_\ep }| \left| \rho_\ep^2 \,\ov{\nu_\ep(\rho_\ep)} - \ov{\rho_\ep^2 \nu_\ep(\rho_\ep)} \right|} 
	& \leq C \int_{\TT}{ |\ov{ F_\ep }| \big|\rho_{\ep}^2  - \ov{\rho_\ep^2} \big|} \\
	& \leq C \int_{\TT}{ |\ov{ F_\ep }| \big(\ov{\rho_{\ep}^2}  - \rho_\ep^2 \big)}
	= C \int_{\TT}{|\ov{ F_\ep }|\Psi}.
	\end{align*}
	Since $|\ov{F_\ep}| \in L^1(0,T;L^\infty(\TT))$ and since $\displaystyle  \int_{\TT}{\Psi(0,\cdot)} = 0$ (the initial data does not depend on $\delta$),
	we conclude by Gronwall's Lemma that
	\[
	\Psi = 0 \quad \text{a.e.}
	\]
	There exists then a subsequence such that
	\begin{equation}
	\rho_{\ep,\delta} \longrightarrow  \rho_\ep \quad \text{strongly in} \quad L^q\big((0,T) \times \TT\big) \quad\forall q \in [1, +\infty).
	\end{equation}

\paragraph{Limit of the singular laws.}
From the strong convergence of $\rho_{\ep,\delta}$ and the fact that $\rho_\ep < 1$ a.e. (see ~\eqref{eq:bound_rho_ep}) the following convergences hold
\[
p_{\ep,\delta}(\rho_{\ep,\delta}) \longrightarrow p_\ep(\rho_\ep) \quad \text{strongly in} \quad L^{1}\big((0,T)\times \TT\big)
\]
\[
\sqrt{\lambda_{\ep,\delta}(\rho_{\ep,\delta})}\longrightarrow \sqrt{\lambda_\ep(\rho_\ep)} \quad \text{strongly in} \quad L^{\frac{2\gamma}{\beta}}\big((0,T)\times \TT\big).
\]

\paragraph{Limit in the weak formulation of the equations.} 
We can now pass to the limit in the weak formulation of the mass and momentum equations. 
The only delicate term to deal with is the bulk viscosity term $\lambda_{\ep,\delta}(\rho_{\ep,_\delta}) \Div u_{\ep,\delta}$.
We use the strong convergence of $\sqrt{\lambda_{\ep,\delta}(\rho_{\ep,\delta})}$ towards $\sqrt{\lambda_\ep(\rho_\ep)}$ in $L^2_{t,x}$ combined with the weak convergence in $L^2_{t,x}$ of $\Div u_{\ep,\delta}$ to get 
\[ \sqrt{\lambda_{\ep,\delta}(\rho_{\ep,\delta})} \Div u_{\ep,\delta} \longrightarrow \sqrt{\lambda_\ep(\rho_\ep)} \Div u_\ep \quad \text{weakly in} \quad L^{1}((0,T)\times \TT)  \]
Since $\sqrt{\lambda_{\ep,\delta}(\rho_{\ep,\delta})} \Div u_{\ep,\delta}$ also converges weakly in $L^2((0,T)\times \TT)$ (from the energy estimate), we deduce that the previous convergence holds in $L^2((0,T)\times \TT)$.
Using once again the strong convergence of $\sqrt{\lambda_{\ep,\delta}(\rho_{\ep,\delta})}$ we obtain the weak convergence of the whole bulk viscosity term
\begin{equation}
\lambda_\delta(\rho_{\ep,\delta}) \Div u_{\ep,\delta} =\sqrt{\lambda_{\ep,\delta}(\rho_{\ep,\delta})}\sqrt{\lambda_{\ep,\delta}(\rho_{\ep,\delta})} \Div u_{\ep,\delta} \rightharpoonup \lambda_\ep(\rho_\ep) \Div u_\ep \quad \text{in} \quad L^{1}((0,T)\times \TT).
\end{equation} 
\noindent Finally, the limit $(\rho_\ep, u_\ep)$ is a global weak solution of the system~
\begin{subnumcases}{\label{eq:semi-stat-1-ep}}
\partial_t \rho_\ep + \Div (\rho_\ep u_\ep) = 0 \label{eq:semi-stat-1-ep-mass}\\
\nabla p_\ep(\rho_\ep) - \nabla(\lambda_\ep(\rho_\ep) \Div u_\ep) - 2\Div(\mu \D(u_\ep)) + r u_\ep = f \label{eq:semi-stat-1-ep-mom} \\
0 \leq \rho_\ep < 1 \>  \quad \text{a.e.} ~ \> (0,T)\times \TT.
\end{subnumcases}
In addition, we have the energy inequality
\begin{align}\label{eq:energy_ep}
& \sup_{t\in [0,T]} \int_{\TT}{H_\ep(\rho_\ep)} + \int_0^T\int_{\TT}{(\frac{3}{2}\mu  + \lambda_\ep(\rho_\ep)) |\Div u_\ep|^2 }+ \frac{\mu}{2} \int_0^T\int_{\TT} |{\rm curl} \ u_\ep|^2 \\
& + r \int_0^T\int_{\TT} |u_\ep|^2  \leq ~ \int_{\TT}{H_\ep(\rho^0_\ep)} 
    + \frac{1+C}{2\mu} \|f\|^2_{L^2(0,T;L^{\bar{q}}(\TT))} \nonumber 
\end{align}
where 
\[ 
H_\ep(\rho) = \dfrac{\ep}{\gamma -1} \dfrac{\rho^\gamma}{(1-\rho)^{\gamma-1}}.
\]

%%%%%%%%%%%%%%%%%%%%%%%%%%%%%%%%%%%%%%%%%%%%%%%%%%
%%%%%%%%%%%%%%%%%%%%%%%%%%%%%%%%%%%%%%%%%%%%%%%%%%
\subsection{Congestion limit $\ep \rightarrow 0$}

\subsubsection{Uniform estimates in $\ep$} 
From the energy estimate~\eqref{eq:energy_ep}, we deduce the controls of different quantities uniformly with respect to the parameter $\ep$
\begin{align}
& \big(H_\ep(\rho_\ep)\big)_\ep ~ \text{is bounded in} ~L^\infty(0,T;L^1(\TT)) \label{bound_internalernergy}, \\
& \big(u_\ep\big)_\ep ~ \text{is bounded in} ~ L^2\big(0,T;(H^1(\TT))^2\big) ,
\label{bound_u}\\
& \big(\sqrt{\lambda_\ep(\rho_\ep)}\Div u_\ep\big)_\ep ~ \text{is bounded in} ~ L^2\big((0,T)\times\TT\big).
\end{align}
	\paragraph{Control of the pressure.}
\begin{lem}
	Let $(\rho_\ep,u_\ep)$ be a global weak solution to~\eqref{eq:semi-stat-0} with $\gamma > \beta > 1$. 
	Then, there exists a constant $C > 0$ independent of $\ep$ such that
	\begin{equation}
	\|p_\ep(\rho_\ep)\|_{L^1((0,T)\times \TT)} \leq C.
	\end{equation}
\end{lem}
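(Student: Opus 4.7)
The plan is to derive the bound by testing the momentum equation~\eqref{eq:semi-stat-1-ep-mom} against a Bogovskii-type function. Since $\rho_\ep \in L^\infty((0,T)\times \TT)$ with mean $<\rho_\ep> = M^0 < 1$, I will set
\[
\psi_\ep := \nabla \Delta^{-1}\big(\rho_\ep - M^0\big).
\]
By Calder\'on--Zygmund theory, $\psi_\ep \in L^\infty(0,T;(W^{1,q}(\TT))^3)$ for every $q\in(1,\infty)$ with a bound independent of $\ep$, and satisfies $\Div \psi_\ep = \rho_\ep - M^0$, $\curl \psi_\ep = 0$. Inserting $\psi_\ep$ into the weak momentum equation and rearranging via $\int p_\ep\rho_\ep = \int p_\ep - \int p_\ep(1-\rho_\ep)$ yields
\begin{align*}
(1-M^0)\int_0^T\!\!\int_\TT p_\ep(\rho_\ep)
& = \int_0^T\!\!\int_\TT p_\ep(\rho_\ep)(1-\rho_\ep)
+ \int_0^T\!\!\int_\TT \bigl(2\mu+\lambda_\ep(\rho_\ep)\bigr)\Div u_\ep\,(\rho_\ep - M^0) \\
& \quad + r\int_0^T\!\!\int_\TT u_\ep\cdot\psi_\ep - \int_0^T\!\!\int_\TT f\cdot\psi_\ep.
\end{align*}

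The pressure-defect term on the right is recast via the identity~\eqref{constraint2} as $\ep^{1/\gamma}\int \rho_\ep p_\ep^{(\gamma-1)/\gamma}$, and Young's inequality with exponents $\gamma$ and $\gamma/(\gamma-1)$ dominates it by $\ep^{1/\gamma}\bigl(\eta \int p_\ep + C_\eta\bigr)$, which is absorbable into the left-hand side for $\eta$ and $\ep$ small. The drag and force contributions $r\int u_\ep \cdot \psi_\ep$ and $\int f\cdot\psi_\ep$ are controlled using the uniform $L^2H^1$ estimate on $u_\ep$ provided by the energy inequality~\eqref{eq:energy_ep}, the hypothesis $f \in L^2L^{\bar q}$, and the uniform $L^\infty$ control of $\psi_\ep$ (via Sobolev embedding in dimension three).

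The main obstacle is the bulk-viscosity contribution. The $\mu$ part is handled directly by Cauchy--Schwarz and~\eqref{eq:energy_ep}. For the $\lambda_\ep$ part, I split
\[
\Bigl|\int_0^T\!\!\int_\TT \lambda_\ep \Div u_\ep\,(\rho_\ep - M^0) \Bigr|
\leq \big\|\sqrt{\lambda_\ep}\Div u_\ep\big\|_{L^2((0,T)\times\TT)}\,\big\|\sqrt{\lambda_\ep}(\rho_\ep-M^0)\big\|_{L^2((0,T)\times \TT)}.
\]
The first factor is bounded uniformly in $\ep$ by the energy inequality~\eqref{eq:energy_ep}. For the second, the key point is to exploit the hypothesis $\gamma > \beta$ together with the algebraic identity $\lambda_\ep^{\gamma/\beta} = \ep^{(\gamma-\beta)/\beta}\,p_\ep$, which combined with H\"older's inequality yields
\[
\int_0^T\!\!\int_\TT \lambda_\ep \leq C\,\ep^{(\gamma-\beta)/\gamma}\Big(\int_0^T\!\!\int_\TT p_\ep\Big)^{\beta/\gamma}.
\]
A final Young's inequality with exponents $2\gamma/\beta$ and $2\gamma/(2\gamma-\beta)$ allows us to absorb a small multiple of $\int p_\ep$ back into the left-hand side, leaving a remainder of order $\ep^{(\gamma-\beta)/(2\gamma-\beta)}$. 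Collecting all contributions and choosing the Young parameters small enough (an operation permitted by the structural assumption $M^0 < 1$) gives the desired uniform bound $\|p_\ep(\rho_\ep)\|_{L^1((0,T)\times\TT)} \leq C$.
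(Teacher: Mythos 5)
Your proof is correct and reaches the same conclusion via the same test function $\psi=\nabla\Delta^{-1}(\rho_\ep-\langle\rho_\ep\rangle)$ (note that to make $\Delta^{-1}$ well-defined on $\TT$ you must subtract the actual mean $\langle\rho_\ep\rangle=\langle\rho_\ep^0\rangle\le M^0$, which is preserved in time, rather than the constant $M^0$ — this is cosmetic and does not change anything since $1-\langle\rho_\ep\rangle\ge 1-M^0>0$). The interesting difference with the paper lies in how each proof handles the two terms that prevent the left-hand side from being outright $\int p_\ep$. The paper keeps $\int p_\ep(\rho_\ep-\langle\rho_\ep\rangle)$ intact and splits according to whether $\rho_\ep$ is above or below $\tfrac{1+\langle\rho_\ep\rangle}{2}$: on the low-density set everything is bounded, and on the high-density set one gains a positive factor $\ge\tfrac{1-\langle\rho_\ep\rangle}{2}$ in front of $\int p_\ep$ while simultaneously $\lambda_\ep\le C\,p_\ep$ pointwise there (since $\gamma>\beta$), allowing the $\lambda_\ep$-contribution from Cauchy--Schwarz to be absorbed. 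You instead perform the algebraic split $\int p_\ep\rho_\ep=\int p_\ep-\int p_\ep(1-\rho_\ep)$, control the defect $\int p_\ep(1-\rho_\ep)$ by the structural identity~\eqref{constraint2} plus Young (the $\ep^{1/\gamma}$ prefactor makes that term small for $\ep$ small), and control $\int\lambda_\ep$ globally via H\"older and the exact relation $\lambda_\ep^{\gamma/\beta}=\ep^{(\gamma-\beta)/\beta}p_\ep$. Your version avoids the density cut-off entirely and tracks the small powers of $\ep$ explicitly, which makes it slightly more quantitative; the paper's version is more local and does not invoke~\eqref{constraint2}. Both are valid, and both ultimately hinge on the same two facts: $\|\sqrt{\lambda_\ep}\Div u_\ep\|_{L^2}$ is bounded by the energy, and $\gamma>\beta$ makes $\lambda_\ep$ subordinate to $p_\ep$ near the congestion threshold.
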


\medskip
\begin{proof}	
	Let us consider in the weak formulation of the momentum equation the test function
	\[
	\psi = \nabla \Delta^{-1}(\rho_{\ep} - <\rho_{\ep}>).
	\]
	The resulting equation writes
\begin{align*}
& \int_0^T\int_{\TT}{p_{\ep}(\rho_{\ep}) \big(\rho_{\ep} - <\rho_{\ep}>\big)  } \\
& \qquad	= \int_0^T\int_{\TT}{\Big(\frac{3}{2}\mu+ \lambda_{\ep}(\rho_{\ep})\Big)\Div(u_{\ep}) \big(\rho_{\ep} - <\rho_{\ep}>\big)  }
	-  \int_0^T\int_{\TT}{(f-ru_\ep) \cdot \psi}.
\end{align*}	
	Using the controls resulting from the energy inequality and the maximal bound on the density $\rho_\ep$, we get then
	\begin{align*}
	& \int_0^T\int_{\TT}{p_{\ep}(\rho_{\ep}) \big(\rho_{\ep} - <\rho_{\ep}>\big)  } \\
	& ~ \leq C(\|\psi\|_{H^1}) + \int_0^T\int_{\TT}{\sqrt{(2\mu+\lambda_{\ep}(\rho_{\ep}))} \sqrt{(2\mu+\lambda_{\ep}(\rho_{\ep}))}|\Div(u_{\ep})|  } \\
	& ~ \leq C + \dfrac{1}{2\eta}\int_0^T\int_{\TT}{(\frac{3}{2}\mu+\lambda_{\ep}(\rho_{\ep}))|\Div(u_{\ep})|^2} 
	+ \dfrac{\eta}{2} \int_0^T\int_{\TT}{(\frac{3}{2}\mu +\lambda_{\ep}(\rho_{\ep}))} \\
	& ~ \leq C + \dfrac{\eta}{2}\int_0^T\int_{\TT}{\lambda_{\ep}(\rho_{\ep})}
	\end{align*}
	for some $\eta>0$ determined below.
	The integrals of the singular terms can be split into two parts (recall that for low densities the functions $p_\ep$, $\lambda_{\ep}$ are bounded)
	\begin{align*}
	& \int_0^T\int_{\TT}{p_{\ep}(\rho_{\ep}) \big(\rho_{\ep} - <\rho_{\ep}>\big) \mathbf{1}_{\{ \rho_{\ep} \geq \frac{1+<\rho_{\ep}>}{2} \} } } \\
	& ~ \leq C - \int_0^T\int_{\TT}{p_{\ep}(\rho_{\ep}) \big(\rho_{\ep} - <\rho_{\ep}>\big) \mathbf{1}_{\{ \rho_{\ep} < \frac{1+<\rho_{\ep}>}{2} \} } } + \dfrac{1}{2}\int_0^T\int_{\TT}{\lambda_{\ep}(\rho_{\ep}) \mathbf{1}_{\{ \rho_{\ep} < \frac{1+<\rho_{\ep}>}{2}\} } } \\
	& \qquad + \dfrac{\eta}{2}\int_0^T\int_{\TT}{\lambda_{\ep}(\rho_{\ep}) \mathbf{1}_{\{ \rho_{\ep} \geq \frac{1+<\rho_{\ep}>}{2}\} } } \\
	&  ~ \leq C + \dfrac{\eta}{2}\int_0^T\int_{\TT}{\lambda_{\ep}(\rho_{\ep}) \mathbf{1}_{\{ \rho_{\ep} \geq \frac{1+<\rho_{\ep}>}{2}\} } }
	\end{align*}
	Since $\beta < \gamma$, we have in addition that
	\begin{align*}
	\lambda_{\ep}(\rho_{\ep}) \mathbf{1}_{\{ \rho_{\ep} \geq \frac{1+<\rho_{\ep}>}{2}\} } 
	& = \left(\dfrac{1+<\rho_{\ep}>}{1-<\rho_{\ep}>}\right)^{\beta-\gamma}\ep \left(\dfrac{\rho_{\ep,\delta}}{1-\rho_{\ep,\delta}}\right)^{\gamma}  \mathbf{1}_{\{ \rho_{\ep} \geq \frac{1+<\rho_{\ep}>}{2}\}} \\
	& \leq p_{\ep}(\rho_{\ep})  \mathbf{1}_{\{ \rho_{\ep} \geq \frac{1+<\rho_{\ep}>}{2}\} } .
	\end{align*}
	Now, with $\eta = \dfrac{1-M^0}{2}$,
	\begin{align*}
	& \dfrac{1-<\rho_{\ep}>}{2} \int_0^T\int_{\TT}{p_{\ep}(\rho_{\ep})\mathbf{1}_{\{ \rho_{\ep} \geq \frac{1+<\rho_{\ep}>}{2} \} } } \\
	& ~ \leq C +  \dfrac{1-M^0}{4} \int_0^T\int_{\TT}{p_{\ep}(\rho_{\ep}) \mathbf{1}_{\{ \rho_{\ep} \geq \frac{1+<\rho_{\ep}>}{2}\} } }, 
	\end{align*}
	and recalling that we have 
	\[
	\dfrac{1-<\rho_{\ep}>}{2} = \dfrac{1-<\rho_{\ep}^0>}{2} \geq \dfrac{1-M^0}{2}
	\]
	due our assumption~\eqref{hyp:bound-rho0}${}_2$,
	we control the integral of the pressure on the set $\{ \rho_{\ep} \geq \frac{1+<\rho_{\ep}>}{2}\}$
	\[
	 \dfrac{1-M^0}{4}
	\int_0^T\int_{\TT}{p_{\ep}(\rho_{\ep})\mathbf{1}_{\{ \rho_{\ep} \geq \frac{1+<\rho_{\ep}>}{2} \} } } \leq C.
	\]
	Finally, since the pressure is bounded on $\{ \rho_{\ep} < \frac{1+<\rho_{\ep}>}{2}\}$ (far from the singularity), we get
	\begin{equation}\label{eq:bound-pep-L1}
	\big(p_{\ep}(\rho_{\ep})\big)_\ep~ \text{bounded in} ~L^1\big((0,T)\times \TT \big).
	\end{equation}
\end{proof}	

%%%%%%%%%%%%%%%%%%%%%%%%%%%%%%%%%%%%%%%%%%%%%%%%%%
\subsubsection{Limit $\ep\rightarrow 0$}

The boundedness of $p_\ep(\rho_\ep)$ in $L^1$ yields
\begin{equation}
p_\ep(\rho_\ep) \rightarrow p \quad \text{weakly in} \quad \mathcal{M}_+\big((0,T) \times \TT\big).
\end{equation}
Since 
\[
p_\ep(\rho_\ep) \mathbf{1}_{\{\rho_\ep \leq 1-\ep^{1/(\gamma+1)} \}} \leq  \ep^{1/(\gamma+1)}
~\underset{\ep \rightarrow 0}{\longrightarrow} ~ 0  
\]
we deduce that the limit pressure is such that
\begin{equation}\label{eq:spt-p}
\mathrm{spt}\, p \subset \{(t,x),\ \rho = 1\}.
\end{equation}
On the other hand, we have
\begin{align*}
&\lambda_\ep(\rho_\ep) \leq C \ep \mathbf{1}_{\{\rho_\ep \leq M^0\}} 
+ C \ep^{1- \frac{\beta}{\gamma}} \big(p_\ep(\rho_\ep)\big)^{\frac{\beta}{\gamma}} \mathbf{1}_{\{\rho_\ep > M^0\}} 
\end{align*}
with $\beta < \gamma$, so that $\lambda_\ep(\rho_\ep)$ converges strongly to $0$ in $L^{\frac{\gamma}{\beta}}((0,T)\times \TT)$.
Hence, $\sqrt{\lambda_{\ep}(\rho_\ep)}$ converges strongly to $0$ in $L^2$ and 
\begin{equation}
\lambda_\ep(\rho_\ep) \Div u_\ep \rightharpoonup  \ov{\lambda_\ep \Div u} = 0 \quad \text{in}~ \mathcal{D}'.
\end{equation}
The limit system reads in this case
\begin{subnumcases}{\label{eq:pb_limit}}
\partial_t \rho + \Div(\rho u) = 0 \\
\nabla p -2\Div(\mu \D(u)) + ru = f \\
0 \leq \rho \leq 1, \quad \mathrm{spt}\, p \subset \{ \rho = 1\}, \quad p \geq 0 
%0 \leq \rho \leq 1, \quad (1-\rho)p=0, \quad p \geq 0 
\end{subnumcases}
where the memory effects are never activated.

%%%%%%%%%%%%%%%%%%%%%%%%%%%%%%%%%%%
%%%%%%%%%%%%%%%%%%%%%%%%%%%%%%%%%%%
\section{Dominant bulk viscosity regime $1<\gamma\leq \beta$}\label{sec:bulk}
Let us now consider the case where $\beta \geq \gamma$. 
If the approach proceeds formally in the same way (regularization of the system by truncation of the singular laws and study of the behavior as $\ep \rightarrow 0$), we have here to adapt the arguments to get the appropriate uniform controls in $\delta$ and $\ep$. 
For that purpose, we shall distinguish in the estimates three cases: $\gamma < \beta-1$, $\gamma = \beta -1$ and $\beta-1 \leq \gamma < \beta$ that correspond to the sub-cases presented in Theorem~\ref{thm-limit}.
In these three cases, we are not able to control the bulk viscosity coefficient $\lambda_\ep$ from the pressure $p_\ep$.
Nevertheless, we will see that Equation~\eqref{eq:expr_lambdadivu} enables to pass to the limit $\ep \rightarrow 0$ in the two cases $\gamma \in (\beta-1, \beta]$ (no memory effects at the limit), $\gamma \leq \beta-1$ (memory effects).

\subsection{Case $\beta-1 < \gamma \leq \beta$, $\gamma > 1$} 

\subsubsection{Existence of weak solutions at $\ep$ fixed}
We consider the same regularized system~\eqref{eq:semi-stat-delta} with truncated pressure~\eqref{eq:p_delta} and bulk viscosity~\eqref{eq:lambda_delta} as in the previous section.
Recall that we ensure from Lemma~\ref{lem:nu_delta} the following properties on $\nu$:
\begin{equation}
\exists\, C_1, C_2 > 0 \quad \text{s.t.} \quad \|\nu_{\ep,\delta}(\rho_{\ep,\delta})\|_{L^\infty_{t,x}}\leq C_1,
\quad \int_{\TT}{\nu_{\ep,\delta}(\rho_{\ep,\delta})} \geq C_2.
\end{equation}
\begin{lem}\label{lem:controls-1}  Let $(\rho_{\varepsilon, \delta}, u_{\varepsilon,\delta})$ be a global weak solution of the compressible Brinkman system~\eqref{eq:semi-stat-delta} with $\gamma \leq \beta$. Then, 
	\begin{align} 
	& \|\Lambda_{\varepsilon,\delta}(\rho_{\ep,\delta})\|_{L^1((0,T)\times \TT)} +
	\|p_{\varepsilon,\delta}(\rho_{\ep,\delta})\|_{L^1((0,T)\times \TT)}   + 
	\|\lambda_{\varepsilon,\delta}(\rho_{\ep,\delta}) {\rm div} u_{\varepsilon, \delta}\|_{L^1((0,T)\times \TT)}
	\le C
	\end{align}
	where $C$ does not depend on $\delta$ or $\ep$.
\end{lem}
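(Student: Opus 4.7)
The strategy parallels the dominant-pressure proof of the analogous lemma in Section~\ref{sec:pressure}, but now exploits the inequality $\gamma \le \beta$ to obtain bounds that are uniform not only in $\delta$ but also in $\ep$. I would start from the renormalized identity~\eqref{eq:Lambda_delta}, integrate it over $\TT$, and use the mean-value formula~\eqref{eq:lambda_divu} to substitute for $\int_{\TT}\lambda_{\ep,\delta}(\rho_{\ep,\delta})\Div u_{\ep,\delta}$. This gives
\[
\Dt\int_{\TT}\Lambda_{\ep,\delta}(\rho_{\ep,\delta}) + \int_{\TT} p_{\ep,\delta}(\rho_{\ep,\delta})
= |\TT|\,\frac{\int_{\TT}(S_{\ep,\delta}+p_{\ep,\delta}(\rho_{\ep,\delta}))\nu_{\ep,\delta}(\rho_{\ep,\delta})}{\int_{\TT}\nu_{\ep,\delta}(\rho_{\ep,\delta})}.
\]
By Lemma~\ref{lem:nu_delta} the denominator is bounded below by $C_2>0$ and $\|\nu_{\ep,\delta}(\rho_{\ep,\delta})\|_{L^\infty} \le C_1$, while the energy inequality~\eqref{eq:energy_delta} bounds $\|S_{\ep,\delta}\|_{L^1}$ in terms of $\|f\|_{L^2L^{\bar q}}$ and $\|u_{\ep,\delta}\|_{L^2H^1}$ uniformly in $\ep$ and $\delta$.

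The main step is a pointwise, uniform-in-$(\ep,\delta)$ bound on $p_{\ep,\delta}\nu_{\ep,\delta}$, and this is precisely where the hypothesis $\gamma\le\beta$ enters. Setting $X=\rho/(1-\rho)$ on $\{\rho\le 1-\delta\}$, the product reads
\[
p_\ep(\rho)\nu_\ep(\rho) = \frac{\ep X^\gamma}{2\mu + \ep X^\beta}.
\]
On the regime $\ep X^\beta \le 2\mu$ one uses $\ep X^\gamma \le \ep^{1-\gamma/\beta}(2\mu)^{\gamma/\beta}$, bounded for $\ep\le \ep_0$ exactly because $\gamma\le\beta$. On the regime $\ep X^\beta > 2\mu$, one compares numerator and denominator to get $p_\ep\nu_\ep \le X^{\gamma-\beta}$, which is $\le 1$ on $\{X\ge 1\}$ and negligible otherwise. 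The truncated regime $\rho>1-\delta$ is handled analogously: the ratio $p_{\ep,\delta}/\lambda_{\ep,\delta}$ then equals $\rho^{\gamma-\beta}\delta^{\beta-\gamma}$, which stays bounded on $[0,1]$ as $\delta\to 0$ whenever $\gamma\le\beta$. Combining the two regimes yields a constant $C$ independent of $\ep$ and $\delta$ such that $p_{\ep,\delta}\nu_{\ep,\delta}\le C$ everywhere.

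With that bound in hand, the right-hand side of the identity above is controlled by $C(1+\|S_{\ep,\delta}\|_{L^1})$. Integrating in time over $(0,T)$ and discarding the non-negative boundary term $\int_{\TT}\Lambda_{\ep,\delta}(T,\cdot)$ on the left yields the $L^1$ bound on $p_{\ep,\delta}(\rho_{\ep,\delta})$, provided $\int_{\TT}\Lambda_\ep(\rho_\ep^0)$ is bounded uniformly in $\ep$. This last point follows from the hypothesis~\eqref{hyp:energy-t0-C2} by Cauchy--Schwarz,
\[
\int_{\TT}\Lambda_\ep(\rho_\ep^0) \le |\TT|^{1/2}\Bigl(\int_{\TT}\Lambda_\ep(\rho_\ep^0)^2\Bigr)^{1/2} \le |\TT|^{1/2}\sqrt{\Lambda^0}.
\]
Dropping the non-negative $\int_0^t\!\int_{\TT}p_{\ep,\delta}$ instead gives the pointwise-in-time estimate $\int_{\TT}\Lambda_{\ep,\delta}(t,\cdot)\le C$, hence the $L^1((0,T)\times\TT)$ control of $\Lambda_{\ep,\delta}$. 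Finally, the bound on $\lambda_{\ep,\delta}(\rho_{\ep,\delta})\Div u_{\ep,\delta}$ is an immediate consequence of Lemma~\ref{lambdadivu} since every term in the closed-form expression for this quantity ($\Div u_{\ep,\delta}$, the pressure, the Biot--Savart potential $S_{\ep,\delta}$, and the mean-value correction) is now controlled in $L^1((0,T)\times\TT)$ uniformly in $\ep,\delta$.

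The main obstacle is the uniform-in-$\ep$ bound on $p_{\ep,\delta}\nu_{\ep,\delta}$: in the previous section (dominant pressure, $\gamma>\beta$) the analogue of this quantity was not uniformly bounded and had to be absorbed via the internal energy $H_{\ep,\delta}$, yielding only $\ep$-dependent constants. The regime $\gamma\le\beta$ is precisely the threshold where the bulk viscosity is strong enough to tame the pressure in the vicinity of the congestion, and once this is identified the rest of the argument is a routine Gronwall-free integration in time.
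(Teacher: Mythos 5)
Your proposal follows the paper's argument essentially step for step: integrate the renormalized equation for $\Lambda_{\ep,\delta}$ in space, substitute for $\int\lambda_{\ep,\delta}\Div u_{\ep,\delta}$ via the mean-value formula~\eqref{eq:lambda_divu}, exploit $\gamma\le\beta$ to bound $p_{\ep,\delta}\nu_{\ep,\delta}$ pointwise uniformly in $(\ep,\delta)$, and integrate in time; the $\lambda\Div u$ bound then follows from Lemma~\ref{lambdadivu}. The only cosmetic differences are that you split the pointwise bound according to whether $\ep X^\beta$ exceeds $2\mu$ rather than according to the position of $\rho$ relative to $M^0$ and $1-\delta$, and that you spell out the Cauchy--Schwarz control of the initial data $\int_{\TT}\Lambda_\ep(\rho_\ep^0)$ from~\eqref{hyp:energy-t0-C2}, a step the paper leaves implicit.
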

\begin{proof} Starting again from the equation on $\Lambda_{\ep,\delta}$~\eqref{eq:Lambda_delta}, using~\eqref{eq:lambda_divu} to replace $\lambda_{\ep,\delta} \Div u_{\ep,\delta}$, and integrating in space we have
\begin{align*}{\label{eq:control-Lambda}}
& \dfrac{\di}{\di t} \int_{\TT}{\Lambda_{\ep,\delta}(\rho_{\ep,\delta})} + \int_{\TT}{p_{\ep,\delta}(\rho_{\ep,\delta})} \nonumber \\
&  \leq C \Big(\|S_{\ep,\delta}\|_{L^1} + \int_{\TT}{p_{\ep,\delta}(\rho_{\ep,\delta}) \nu_{\ep,\delta}(\rho_{\ep,\delta}) \mathbf{1}_{\{\rho_{\ep,\delta} \leq M^0\}}}
+ \int_{\TT}{p_{\ep,\delta}(\rho_{\ep,\delta}) \nu_{\ep,\delta}(\rho_{\ep,\delta}) \mathbf{1}_{\{M^0 < \rho_{\ep,\delta} \}}} \Big)\nonumber \\
&  \leq C + C \int_{\TT}{\mathbf{1}_{\{\rho_{\ep,\delta} \leq M^0\}}} 
+ \int_{\TT}{ \dfrac{C}{(1-\rho_{\ep,\delta})^{\gamma - \beta}}\mathbf{1}_{\{M^0 < \rho_{\ep,\delta} \leq 1-\delta \}}}
+  \int_{\TT}{ \dfrac{C}{\delta^{\gamma - \beta}}\mathbf{1}_{\{\rho_{\ep,\delta} > 1-\delta\}} } \\
\end{align*}
for some constant $C$ independent of $\delta, \ep$.
Now, since $\gamma < \beta$, we can bound uniformly in $\delta,\ep$ the right-hand side and therefore
\begin{equation*}
\big(\Lambda_{\ep,\delta}(\rho_{\ep,\delta})\big) \quad \text{is bounded in} \quad L^\infty\big(0,T;L^1(\TT)\big),
\end{equation*} 
\[
\big(p_{\ep,\delta}(\rho_{\ep,\delta})\big) \quad \text{is bounded in} \quad L^1\big((0,T)\times \TT\big).
\]
We have as a byproduct (Lemma~\ref{lambdadivu})
\begin{equation*}\label{eq:lambda-delta-2}
\big(\lambda_{\ep,\delta}(\rho_{\ep,\delta}) \Div u_{\ep,\delta}\big)~ \text{bounded in}~ L^1\big((0,T)\times \TT\big).
\end{equation*}
\end{proof}

\bigskip
\noindent
\textit{Uniform upper bound on the density.}
Proposition~\ref{prop:bound_rhodelta} still holds in the case $\gamma < \beta$: there exists $C$ which does not depend on $\delta$ such that
\begin{equation}\label{eq:bound_rhodelta_2}
\|\rho_{\ep,\delta}\|_{L^\infty_{t,x}} \leq \bar{\rho} < \infty.
\end{equation}

\medskip
\begin{lem}\label{lem:bound-rhod-Lambda}
	For any $t \in [0,T]$, we have 
	\begin{equation}\label{eq:est-rho-delta-2}
	\mathrm{meas} \,\big\{ x\in \TT , \ \rho_{\ep,\delta}(t,x) \geq 1-\delta \big\}  \leq C(\ep) \ \delta^{\beta-1} .
	\end{equation}
\end{lem}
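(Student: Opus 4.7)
The plan is to mimic the argument of Lemma~\ref{lem:meas_largerho_delta}, but replacing the role of the energy $H_{\ep,\delta}$ (whose singularity involves $\gamma$) by the renormalized quantity $\Lambda_{\ep,\delta}$ (whose singularity involves $\beta$), which is the natural object in the regime $\gamma\le\beta$. The $L^1$ control of $\Lambda_{\ep,\delta}(\rho_{\ep,\delta})$ uniform in $\delta$ is precisely what is supplied by Lemma~\ref{lem:controls-1}, so the whole work reduces to producing a suitable pointwise lower bound on $\Lambda_{\ep,\delta}(\rho_{\ep,\delta})$ on the set $\{\rho_{\ep,\delta}\ge 1-\delta\}$.

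Concretely, on $\{\rho\ge 1-\delta\}$ the definition~\eqref{df:Lambda_delta} gives
\[
\Lambda_{\ep,\delta}(\rho)=\dfrac{\ep}{\beta-1}\,\dfrac{\rho^\beta-(1-\delta)^\beta\rho}{\delta^\beta}.
\]
I would decompose $\rho^\beta-(1-\delta)^\beta\rho=\rho\bigl[\rho^{\beta-1}-(1-\delta)^{\beta-1}\bigr]+\rho(1-\delta)^{\beta-1}\delta$, the first summand being non-negative for $\rho\ge 1-\delta$ (whether or not $\rho\ge 1$). Keeping only the second, strictly positive, summand yields
\[
\Lambda_{\ep,\delta}(\rho)\ \ge\ \dfrac{\ep\,(1-\delta)^{\beta-1}\rho}{(\beta-1)\,\delta^{\beta-1}}\ \ge\ \dfrac{\ep\,(1-\delta)^{\beta}}{(\beta-1)\,\delta^{\beta-1}}\qquad\text{on }\{\rho\ge 1-\delta\},
\]
since $\rho\ge 1-\delta$ on this set.

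Integrating on $\TT$ and using the uniform $L^\infty(0,T;L^1(\TT))$ bound on $\Lambda_{\ep,\delta}(\rho_{\ep,\delta})$ from Lemma~\ref{lem:controls-1}, one arrives at
\[
\dfrac{\ep\,(1-\delta)^{\beta}}{(\beta-1)\,\delta^{\beta-1}}\;\mathrm{meas}\,\bigl\{x\in\TT:\rho_{\ep,\delta}(t,x)\ge 1-\delta\bigr\}\ \le\ \int_\TT \Lambda_{\ep,\delta}(\rho_{\ep,\delta}(t,\cdot))\ \le\ C,
\]
and since $\delta\le\delta^0<1$ the factor $(1-\delta)^\beta$ is bounded below by a positive constant. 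Rearranging yields~\eqref{eq:est-rho-delta-2} with a constant depending on $\ep$ (through the estimate of Lemma~\ref{lem:controls-1}) but not on $\delta$.

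The only subtle point I anticipate is to be careful that the identity for $\Lambda_{\ep,\delta}$ on $\{\rho>1-\delta\}$ genuinely produces a \emph{non-negative} lower bound without needing $\rho<1$: that is exactly why I rewrite the numerator so as to isolate the term $\rho(1-\delta)^{\beta-1}\delta$, which is unambiguously positive, and absorb the possibly indefinite remainder into a non-negative contribution. Everything else is just a direct Chebyshev-type argument.
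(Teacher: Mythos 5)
Your argument is correct and is essentially identical to the paper's: the same decomposition $\rho^\beta-(1-\delta)^\beta\rho=\rho[\rho^{\beta-1}-(1-\delta)^{\beta-1}]+\rho(1-\delta)^{\beta-1}\delta$ (dropping the first, non-negative, term when $\rho\ge 1-\delta$), followed by the same Chebyshev-type comparison against the $L^\infty(0,T;L^1(\TT))$ bound on $\Lambda_{\ep,\delta}(\rho_{\ep,\delta})$. The only cosmetic discrepancy is in bookkeeping: the $\ep$-dependence of $C(\ep)$ actually originates from dividing by the factor $\ep$ in the pointwise lower bound on $\Lambda_{\ep,\delta}$, not from Lemma~\ref{lem:controls-1}, whose constant is uniform in $\ep$.
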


\begin{proof} 	
		Since initially, we assume~\eqref{hyp:energy-t0-C2}, we can mimic the proof of~\eqref{eq:est-rhodelta} by using $\Lambda_{\ep,\delta}$ instead of $H_{\ep,\delta}$
		\begin{align*}
		C \geq \sup_{t\in [0,T]} \int_{\TT}{\Lambda_{\ep,\delta}(\rho_{\ep,\delta})}
		& \geq \int_{\TT}{\Lambda_{\ep,\delta}(\rho_{\ep,\delta}) \mathbf{1}_{\{ \rho_{\ep,\delta} > 1-\delta \}}} \\
		& \geq C\int_{\TT}{\dfrac{\ep}{\delta^\beta}\Bigl[
			(\rho_{\varepsilon,\delta}^{\beta-1} - (1-\delta)^{\beta-1})
			+ (1-\delta)^{\beta-1}\delta  \Bigr] \rho_{\ep,\delta}
			\mathbf{1}_{\{ \rho_{\ep,\delta} > 1-\delta \}}} \\
		& \geq  C\int_{\TT}{\dfrac{\ep}{\delta^{\beta-1} }\Bigl[
			(1-\delta)^{\beta}  \Bigr] 
			\mathbf{1}_{\{ \rho_{\ep,\delta} > 1-\delta \}}}.
		\end{align*}
\end{proof}

As explained in the previous section, passing to the limit in the weak formulation of the equations requires additional estimates on the singular laws. 

\bigskip
\begin{lem}\label{lem:controls-2}Let $(\rho_{\varepsilon, \delta}, u_{\varepsilon,\delta})$ be a global weak solution of the compressible Brinkman system~\eqref{eq:semi-stat-delta} with $\gamma \in (\beta-1,\beta]$.
If initially~\eqref{hyp:energy-t0-C2} is satisfied, then there exist two constants $C^1>0$ independent of $\delta,\ep$, and $C^2_\ep> 0$ independent of $\delta$, such that
\begin{equation}
\|\Lambda_{\ep,\delta}(\rho_{\ep,\delta})\|_{L^\infty(0,T; L^2(\TT))} 
\leq C^1,
\end{equation}
\begin{equation}
 \|p_{\ep,\delta}(\rho_{\ep,\delta})\|_{L^{1+\frac{\beta-1}{\gamma}}((0,T)\times \TT)} + \|\lambda_{\ep,\delta}(\rho_{\ep,\delta})\|_{L^{\frac{\beta + \gamma -1}{\beta}}((0,T)\times \TT)} 
\leq C^2_\ep.
\end{equation}
\end{lem}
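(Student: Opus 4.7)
The plan is to obtain the $L^\infty(0,T;L^2(\TT))$ bound on $\Lambda_{\ep,\delta}$ by a renormalization of the transport--source equation~\eqref{eq:Lambda_delta} against $\Lambda_{\ep,\delta}$ itself, and to extract the integrability of $p_{\ep,\delta}$ and $\lambda_{\ep,\delta}$ from the ``good'' term $\int \Lambda_{\ep,\delta} p_{\ep,\delta}$ that this produces. Concretely, apply Di Perna--Lions renormalization with $b(s)=s^2$ to~\eqref{eq:Lambda_delta} to get
$$\partial_t \Lambda_{\ep,\delta}^2 + \Div(\Lambda_{\ep,\delta}^2 u_{\ep,\delta}) + \Lambda_{\ep,\delta}^2 \Div u_{\ep,\delta} = -2\Lambda_{\ep,\delta}\lambda_{\ep,\delta}\Div u_{\ep,\delta},$$
then integrate over $\TT$ and substitute $\lambda_{\ep,\delta}\Div u_{\ep,\delta} = -2\mu \Div u_{\ep,\delta} + p_{\ep,\delta} + S_{\ep,\delta} + \kappa_{\ep,\delta}$ from~\eqref{eq:expr_lambdadivu}, where $\kappa_{\ep,\delta} := \langle \lambda_{\ep,\delta}\Div u_{\ep,\delta} - p_{\ep,\delta}\rangle$ is uniformly controlled by Lemma~\ref{lem:controls-1} together with the mean-value formula~\eqref{eq:lambda_divu}. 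This yields the key identity
$$\Dt \int_{\TT} \Lambda_{\ep,\delta}^2 + 2\int_{\TT} \Lambda_{\ep,\delta}\, p_{\ep,\delta} = -\int_{\TT} \Lambda_{\ep,\delta}^2 \Div u_{\ep,\delta} + 4\mu \int_{\TT} \Lambda_{\ep,\delta}\Div u_{\ep,\delta} - 2\int_{\TT} \Lambda_{\ep,\delta}(S_{\ep,\delta} + \kappa_{\ep,\delta}).$$

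The next step is to control the right-hand side using two elementary algebraic facts that hold in the regime $\beta - 1 < \gamma \leq \beta$: first, the identity $\Lambda_{\ep,\delta} = \lambda_{\ep,\delta}(1-\rho_{\ep,\delta})/(\beta-1)$ on the untruncated region (with an analogous inequality in the truncated region) yields the uniform bound $\Lambda_{\ep,\delta}\nu_{\ep,\delta} \leq 1/(\beta-1)$; second, comparing the explicit expressions one sees $\Lambda_{\ep,\delta}^2 \leq C\,\Lambda_{\ep,\delta}\, p_{\ep,\delta}$ on $\{\rho_{\ep,\delta} \geq M^0\}$ (using also the maximal density bound from Proposition~\ref{prop:bound_rhodelta}). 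Substituting $\Div u_{\ep,\delta} = \nu_{\ep,\delta}(p_{\ep,\delta} + S_{\ep,\delta} + \kappa_{\ep,\delta})$ into the two $\Div u$-terms and using these algebraic facts converts every bad term into expressions of the form $\int \Lambda_{\ep,\delta} p_{\ep,\delta}$, $\int \Lambda_{\ep,\delta}|S_{\ep,\delta}|$, or $\int \Lambda_{\ep,\delta}$, the latter two being handled by Young's inequality together with Lemma~\ref{lem:controls-1} and the elliptic estimate on $S_{\ep,\delta}$. After absorbing a fraction of $\int \Lambda_{\ep,\delta} p_{\ep,\delta}$ into the left-hand side, one arrives at a differential inequality of Gronwall type
$$\Dt \int_{\TT} \Lambda_{\ep,\delta}^2 + c \int_{\TT} \Lambda_{\ep,\delta}\, p_{\ep,\delta} \leq C\int_{\TT} \Lambda_{\ep,\delta}^2 + C,$$
with $c>0$ and $C$ uniform in $\delta,\ep$. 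Integrating in time and invoking the initial condition~\eqref{hyp:energy-t0-C2} produces the uniform $L^\infty L^2$ bound on $\Lambda_{\ep,\delta}$ together with a uniform bound on $\int_0^T\int_{\TT} \Lambda_{\ep,\delta} p_{\ep,\delta}$.

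To conclude, I exploit the algebraic identity
$$\Lambda_{\ep,\delta}\, p_{\ep,\delta} = \tfrac{1}{\beta-1}\,\ep^{(\gamma-\beta+1)/\gamma}\, \rho_{\ep,\delta}\, p_{\ep,\delta}^{(\gamma+\beta-1)/\gamma}$$
(from~\eqref{constraint1}) together with the pointwise bound $\rho_{\ep,\delta}\geq M^0$ on the relevant set, which gives
$p_{\ep,\delta}^{(\gamma+\beta-1)/\gamma} \leq C_\ep(1 + \Lambda_{\ep,\delta} p_{\ep,\delta})$; on the complementary set $\{\rho_{\ep,\delta}<M^0\}$ the pressure is bounded. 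Hence $p_{\ep,\delta} \in L^{(\gamma+\beta-1)/\gamma}$ with $C^2_\ep$ independent of $\delta$. The same algebraic comparison between $\lambda_{\ep,\delta}^{(\gamma+\beta-1)/\beta}$ and $\Lambda_{\ep,\delta} p_{\ep,\delta}$ (both scaling as $\rho^{\gamma+\beta-1}/(1-\rho)^{\gamma+\beta-1}$ in the critical region) gives the final bulk-viscosity estimate. The delicate step is the absorption in the Gronwall inequality, where uniformity in $\ep$ for $C^1$ requires that every occurrence of $\Div u_{\ep,\delta}$ be eliminated in favor of $p_{\ep,\delta}$ and $S_{\ep,\delta}$ via the effective flux relation, rather than bounded directly through the $\ep$-dependent $L^2$-norm of $\nabla u_{\ep,\delta}$ coming from the energy inequality; the sign condition $\beta-1<\gamma$ is precisely what keeps the resulting absorption coefficient $c$ strictly positive.
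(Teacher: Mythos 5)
Your overall strategy is the one the paper uses: renormalize~\eqref{eq:Lambda_delta} against $\Lambda_{\ep,\delta}$, trade $\lambda_{\ep,\delta}\Div u_{\ep,\delta}$ for $p_{\ep,\delta}+S_{\ep,\delta}+\kappa_{\ep,\delta}$ via the effective-flux relation~\eqref{eq:expr_lambdadivu}, integrate in space, Gronwall, and then read off the $L^q$ bounds on $p_{\ep,\delta},\lambda_{\ep,\delta}$ from the resulting $L^1$ bound on $\Lambda_{\ep,\delta}\,p_{\ep,\delta}$. Your opening identity and the final algebraic extraction are correct and match the paper.

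Two points are off, though. First, your stated rationale for eliminating \emph{every} $\Div u_{\ep,\delta}$ rests on a misreading: the $L^2$ bound on $\Div u_{\ep,\delta}$ coming from~\eqref{eq:energy_delta} \emph{is} uniform in $\ep$, because under~\eqref{hyp:bound-rho0} the initial internal energy $\int_{\TT}H_{\ep}(\rho^0_\ep)$ vanishes as $\ep\to0$ (and likewise $\int_{\TT}(\Lambda_\ep(\rho^0_\ep))^2\to0$). The paper simply bounds $\int\Lambda_{\ep,\delta}\Div u_{\ep,\delta}$ by Cauchy--Schwarz against the uniform $\|\Div u_{\ep,\delta}\|_{L^2_{t,x}}$ and $\int\Lambda_{\ep,\delta}^2$; eliminating $\Div u_{\ep,\delta}$ in this term is unnecessary. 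Second, and more importantly, your absorption step is not as clean as you claim. After substituting $\Div u_{\ep,\delta}=\nu_{\ep,\delta}(p_{\ep,\delta}+S_{\ep,\delta}+\kappa_{\ep,\delta})$, the term $4\mu\int\Lambda_{\ep,\delta}\nu_{\ep,\delta}\,p_{\ep,\delta}$ has pointwise coefficient $4\mu\nu_{\ep,\delta}=\dfrac{4\mu}{2\mu+\lambda_{\ep,\delta}}$, which can be arbitrarily close to $2$ (where $\lambda_{\ep,\delta}$ is small); subtracting it from $2\int\Lambda_{\ep,\delta}p_{\ep,\delta}$ leaves $2\int\Lambda_{\ep,\delta}p_{\ep,\delta}\,\lambda_{\ep,\delta}\nu_{\ep,\delta}$, whose prefactor $\lambda_{\ep,\delta}\nu_{\ep,\delta}$ degenerates away from $\rho=1$, so the inequality $\Dt\int\Lambda_{\ep,\delta}^2 + c\int\Lambda_{\ep,\delta}p_{\ep,\delta}\le C\int\Lambda_{\ep,\delta}^2+C$ with a \emph{uniform} $c>0$ does not follow as stated. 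It can be rescued by splitting into $\{\rho_{\ep,\delta}\ge 1-\eta\}$ (where $\lambda_{\ep,\delta}\nu_{\ep,\delta}\ge 1/2$) and its complement (where $\Lambda_{\ep,\delta}p_{\ep,\delta}$ is bounded pointwise), but you should say so. Relatedly, the sign condition $\gamma>\beta-1$ has nothing to do with keeping this absorption coefficient positive: it enters in Lemma~\ref{lem:controls-2} only through the comparison $\Lambda_{\ep,\delta}\lesssim p_{\ep,\delta}$ near $\rho=1$ and through the exponents in the $L^q$ conversion, and later in the $\ep\to0$ step where it guarantees $\Lambda_\ep(\rho_\ep)\to0$.

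Finally, a small algebraic slip: on the untruncated region, $\Lambda_{\ep,\delta}=\dfrac{\rho_{\ep,\delta}(1-\rho_{\ep,\delta})}{\beta-1}\lambda_{\ep,\delta}$, not $\dfrac{(1-\rho_{\ep,\delta})}{\beta-1}\lambda_{\ep,\delta}$; the bound $\Lambda_{\ep,\delta}\nu_{\ep,\delta}\le\dfrac{1}{\beta-1}$ you state still follows, but you should also verify it (and $\Lambda_{\ep,\delta}\le C p_{\ep,\delta}$ on $\{\rho_{\ep,\delta}\ge M^0\}$) on the truncated region $\{\rho_{\ep,\delta}>1-\delta\}$ using the $L^\infty$ bound on $\rho_{\ep,\delta}$ and $\gamma>\beta-1$; it is not automatic there since the truncated laws are no longer related by the same ODE.
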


\bigskip
\begin{proof}
	We multiply Equation~\eqref{eq:Lambda_delta} by $\Lambda_{\ep,\delta}(\rho_{\ep,\delta})$:
	\begin{align}\label{eq:eta-Lambda}
	& \partial_t\big(\Lambda_{\ep,\delta}(\rho_{\ep,\delta})\big)^2 + \Div\big( \big(\Lambda_{\ep,\delta}(\rho_{\ep,\delta})\big)^2 u_{\ep,\delta} \big) \nonumber\\
	& = - \Big[\rho_{\ep,\delta}  (\Lambda_{\ep,\delta})'_+(\rho_{\ep,\delta})\Lambda_{\ep,\delta}(\rho_{\ep,\delta})
	+ \Lambda_{\ep,\delta}(\rho_{\ep,\delta}) \lambda_{\ep,\delta}(\rho_{\ep,\delta})
	\Big] \Div u_{\ep,\delta}  \nonumber \\
%	& = - [\frac{\rho_{\ep,\delta} (\eta_{\delta})'_+(\rho_{\ep,\delta})\Lambda_{\ep,\delta}(\rho_{\ep,\delta})}{\lambda_{\ep,\delta}} + \eta_\delta(\rho_{\varepsilon,\delta}) ] \lambda_{\ep,\delta}(\rho_{\ep,\delta})\, {\rm div} u_{\varepsilon, \delta} \nonumber \\
	& = - \left[\frac{\rho_{\ep,\delta} (\Lambda_{\ep,\delta})'_+(\rho_{\ep,\delta})}{\lambda_{\ep,\delta}(\rho_{\ep,\delta})} + 1\right] 
	 \Lambda_{\ep,\delta}(\rho_{\ep,\delta})
	 \lambda_{\ep,\delta}(\rho_{\ep,\delta})\, {\rm div} u_{\varepsilon, \delta} 
	\end{align}
	where
	\[
	(\Lambda_{\ep,\delta})'_+(\rho) = 
	\begin{cases}
	~ \dfrac{\ep}{(1-\rho)^\beta} \dfrac{(\beta-\rho)\rho^{\beta-1}}{\beta-1} \quad&\text{if}\quad \rho < 1-\delta \\
	~ \dfrac{\ep}{\delta^\beta} \dfrac{\beta\rho^{\beta-1} - (1-\delta)^\beta}{\beta-1} \quad&\text{if}\quad \rho \geq 1-\delta 
	\end{cases}
	\]
	which is such that
	\[
	0 \leq \dfrac{\rho_{\ep,\delta} \ (\Lambda_{\ep,\delta})'_+(\rho_{\ep,\delta})}{\lambda_{\ep,\delta}(\rho_{\ep,\delta})}
	\leq \dfrac{\beta}{\beta-1}.
	\]
	Recall now that 
	\begin{align*}
	\lambda_{\ep,\delta}(\rho_{\ep,\delta})\Div u_{\ep,\delta}
	& =  p_{\ep,\delta} (\rho_{\ep,\delta})
	+  S
	- 2\mu \, \Div u_{\ep,\delta}  \\
	& \quad  + \dfrac{1}{|\TT|} \int_{\TT}{\big( \lambda_{\ep,\delta}(\rho_{\ep,\delta})\Div u_{\ep,\delta} -  p_{\ep,\delta} (\rho_{\ep,\delta}) \big)}
	\end{align*}
	that we can replace in~\eqref{eq:eta-Lambda}.
	Integrating next in space, we obtain
	\begin{align}\label{eq:eta-2}
	& \Dt \int_{\TT}{\big(\Lambda_{\ep,\delta}(\rho_{\ep,\delta})\big)^{2}} 
	+ \dfrac{2\beta -1}{\beta-1} \int_{\TT}{\Lambda_{\ep,\delta}(\rho_{\ep,\delta}) p_{\ep,\delta} (\rho_{\ep,\delta})}  \nonumber \\
	&\quad = 
	- \dfrac{2\beta -1}{\beta-1}\int_{\TT}{\Lambda_{\ep,\delta}(\rho_{\ep,\delta}) \ S_{\ep,\delta}}
    + 2\mu\dfrac{2\beta -1}{\beta-1} \int_{\TT}{\Lambda_{\ep,\delta}(\rho_{\ep,\delta}) \Div u_{\ep,\delta} } \nonumber \\
	& \qquad 
	- \dfrac{2\beta -1}{\beta-1} \dfrac{1}{|\TT|}\left(\int_{\TT}{\Lambda_{\ep,\delta}(\rho_{\ep,\delta})}\right) \left( \int_{\TT}{\big( \lambda_{\ep,\delta}(\rho_{\ep,\delta})\Div u_{\ep,\delta} -  p_{\ep,\delta} (\rho_{\ep,\delta}) \big)}	\right) \nonumber \\
	& \quad= I_1 + I_2 + I_3 .\nonumber       
	\end{align}
	By a Cauchy-Schwarz inequality, we have
	\begin{align*}
	|I_1| + |I_2|
	& \leq C (\|S_{\ep,\delta}\|_{L^2}^2 + \|\Div u_{\ep,\delta}\|_{L^2}^2) + \int_{\TT}{\big(\Lambda_{\ep,\delta}(\rho_{\ep,\delta})\big)^{2}}
	\end{align*}
	while, due to the previous estimates, 
	\begin{align*}
	\int_0^T|I_3| 
	& \leq C \|\Lambda_{\ep,\delta}(\rho_{\ep,\delta})\|_{L^\infty L^1}
	       \big(\|\lambda_{\ep,\delta}(\rho_{\ep,\delta})\Div u_{\ep,\delta}\|_{L^1L^1} +  \|p_{\ep,\delta} (\rho_{\ep,\delta})\|_{L^1L^1} \big).
	\end{align*}
	We conclude then with a Gronwall inequality (recall that initially we assume~\eqref{hyp:energy-t0-C2}) and get
	\begin{equation}
	\sup_{t \in [0,T]}\int_{\TT}{\big(\Lambda_{\ep,\delta}(\rho_{\ep,\delta})\big)^{2}} 
	+ \dfrac{2\beta -1}{\beta-1} \int_{\TT}{\Lambda_{\ep,\delta}(\rho_{\ep,\delta}) p_{\ep,\delta} (\rho_{\ep,\delta})} 
	\leq C + C\int_{\TT}{\big(\Lambda_{\ep,\delta}(\rho_{\ep}^0)\big)^{2}}.
	\end{equation}
With the control of $\Lambda_{\ep,\delta}(\rho_{\ep,\delta}) p_{\ep,\delta}(\rho_{\ep,\delta})$ in $L^1$, we deduce that
\[
\left\|\dfrac{1}{(1-\rho_{\ep,\delta})^{\gamma + \beta-1} }\mathbf{1}_{\{ \rho_{\ep,\delta} \leq 1-\delta \}} \right\|_{L^1} \leq C(\ep) 
\]
and
\[
\left\|\dfrac{\rho_{\ep,\delta}^{\gamma + \beta -1}}{\delta^{\gamma+ \beta-1}} \mathbf{1}_{\{ \rho_{\ep,\delta} > 1-\delta \}} \right\|_{L^1} \leq C(\ep) 
\]
since (see Lemma~\ref{lem:bound-rhod-Lambda} and~\eqref{eq:bound_rhodelta_2})
\[
\Lambda_{\ep,\delta}(\rho_{\ep,\delta})\mathbf{1}_{\{ \rho_{\ep,\delta} > 1-\delta \}}
\geq \dfrac{\ep (1-\delta)^\beta}{\delta^{\beta-1}}
\quad \text{and} \quad
\|\rho_{\ep,\delta}\|_{L^\infty} \leq C.
\]
Hence,
\begin{equation}\label{eq:pdelta2}
\big(p_{\ep,\delta}(\rho_{\ep,\delta})\big)_\delta  ~ \text{is bounded in} ~ L^{1 + \frac{\beta-1}{\gamma}}((0,T)\times \TT) ,
\end{equation}
\begin{equation}\label{eq:lambdadelta2}
\big(\lambda_{\ep,\delta}(\rho_{\ep,\delta})\big)_\delta  ~ \text{is bounded in} ~ L^{\frac{\gamma + \beta - 1}{\beta}}((0,T)\times \TT).
\end{equation}
We emphasize the fact that these controls are not uniform in $\ep$. 
This is due to the fact that the pressure $p_\ep$ and the bulk viscosity $\lambda_\ep$ are ``more singular'' than $\Lambda_\ep$ close to $1$ ($\gamma, \beta > \beta-1$).
\end{proof}

\begin{remark}
	We assumed in this case that $\gamma > 1$ which enables, thanks to~\eqref{eq:lambdadelta2}, to bound $\lambda_{\ep,\delta}(\rho_{\ep,\delta})$ in $L^1((0,T) \times \TT)$.
\end{remark}

\bigskip
\subsubsection*{Limit $\delta \rightarrow 0$}
We can pass to the limit in the weak formulation of the equation except in the non-linear terms.
As in the previous section, we need to prove the strong convergence of $\rho_{\ep,\delta}$.
Note first that the results of Proposition~\ref{prop:compactness_F} still hold:
\begin{equation}\label{prop:compactness_F_1-2}
\ov{\left(\dfrac{\rho^2 F_\ep}{2\mu +\lambda_\ep(\rho)} \right)} = \ov{\left(\dfrac{\rho^2}{2\mu +\lambda_\ep(\rho)}\right)}~ \ov{F_\ep}\qquad \text{in} \quad  \mathcal{D}',
\end{equation}
\begin{equation*}\label{prop:compactness_F_2-2}
\ov{\left(\dfrac{F_\ep}{2\mu +\lambda_\ep(\rho)}\right)} = \ov{\left(\dfrac{1}{2\mu +\lambda_\ep(\rho)}\right)}~ \ov{F_\ep}\qquad \text{in} \quad \mathcal{D}'. 
\end{equation*}
Hence, for $\Psi = \ov{\rho_\ep^2} - \rho_\ep^2 \geq 0$,
\begin{align*}\label{eq:Psi-2}
\partial_t \Psi + \Div(\Psi u) 
& = \ov{ F_\ep } \, \left( \rho_\ep^2 \,\ov{\nu_\ep(\rho_\ep)} - \ov{\rho_\ep^2 \nu_\ep(\rho_\ep)} \right) \nonumber \\
& \quad + \rho_\ep^2\, \ov{ p_\ep(\rho_\ep) \nu_\ep(\rho_\ep) } - \ov{ \rho_\ep^2 p_\ep(\rho_\ep) \nu_\ep(\rho_\ep) }.
\end{align*}
where, in this case, the function $s \mapsto b_{\ep,\delta}(s) = p_{\ep,\delta}(s) \nu_{\ep,\delta}(s)$ is non-monotone.
In fact it is not a problem, because $b_{\ep,\delta}$ is now bounded and we can then treat it as the first part of the right-hand side.
We obtain similarly
\begin{equation}
\dfrac{\di}{\di t} \int_{\TT}{\Psi(t,\cdot)}
\leq C  \int_{\TT}{(|\ov{F_\ep}|+1)\Psi(t,\cdot)}
\end{equation}
with $|\ov{F_\ep}|\in L^1(0,T;L^\infty(\TT))$ which yields again $\Psi = 0$ by Gronwall's inequality.
The strong convergence of the density is thus preserved in this case.
In addition, due to~\eqref{eq:est-rho-delta-2}, we ensure that the limit density satisfies
\begin{equation}\label{eq:rho-ep-max}
0\leq \rho_{\ep} < 1 \quad \text{a.e.}
\end{equation}
We can then pass to the limit in all the terms of the equations. 
In particular, for the bulk viscosity term, the strong convergence of $\rho_{\ep,\delta}$ and~\eqref{eq:rho-ep-max} imply that $\lambda_{\ep,\delta}(\rho_{\ep,\delta})$ converges strongly to $\lambda_\ep(\rho_\ep)$ in $L^{\frac{\gamma + \beta -1}{\beta}}((0,T)\times \TT)$.
As in the previous section we deduce that 
\[
\lambda_{\ep,\delta}(\rho_{\ep,\delta})\Div u_{\ep,\delta}
= \sqrt{\lambda_{\ep,\delta}(\rho_{\ep,\delta})}\sqrt{\lambda_{\ep,\delta}(\rho_{\ep,\delta})}\Div u_{\ep,\delta}
\]
converges weakly to $\lambda_\ep(\rho_\ep)\Div u_\ep$ in $L^q((0,T)\times \TT)$ for some $q>1$.\\
Finally, we pass to the limit in~\eqref{eq:Lambda_delta} (recall $\Lambda_{\ep,\delta}(\rho_{\ep,\delta})$ is bounded in $L^\infty(0,T;L^2(\TT))$ so that $\Lambda_{\ep,\delta}(\rho_{\ep,\delta}) u_{\ep,\delta}$ is bounded in $L^2(0,T;(L^{3/2}(\TT))^3)$):
\begin{equation}\label{eq:Lambda_ep-2}
\partial_t \Lambda_\ep(\rho_\ep) + \Div\big(\Lambda_\ep(\rho_\ep) u_\ep \big) = -\lambda_\ep(\rho_\ep)\Div u_\ep
\end{equation}

\subsubsection{Limit $\ep \rightarrow 0$.} 
At this stage, we control uniformly 
\begin{align*}
&\big(\rho_\ep \big)_\ep \quad  \text{in} \quad L^\infty((0,T)\times \TT), \\
&\big(u_\ep \big)_\ep \quad  \text{in} \quad L^2(0,T;(H^1(\TT))^3), \\
&\big(\Lambda_\ep(\rho_\ep)\big)_\ep \quad  \text{in} \quad L^\infty(0,T; L^2(\TT)) ,\\
&\big(p_\ep(\rho_\ep)\big)_\ep \quad  \text{in} \quad L^1((0,T)\times \TT) ,\\
&\big(\lambda_\ep(\rho_\ep) \Div u_\ep\big)_\ep \quad  \text{in} \quad L^1((0,T)\times \TT).
\end{align*}
Therefore
\begin{equation*}
p_\ep(\rho_\ep) \rightarrow p \quad \text{in}~\mathcal{M}_+((0,T)\times \TT)
\end{equation*}
and
\begin{equation*}
\lambda_\ep(\rho_\ep) \Div u_\ep\rightarrow - \Pi \quad \text{in}~\mathcal{M}((0,T)\times \TT),
\end{equation*}
whereas
\begin{equation*}
\Lambda_\ep(\rho_\ep) \leq C \ep \mathbf{1}_{\{\rho_\ep \leq M^0\}} 
+ C \ep^{1- \frac{\beta-1}{\gamma}} \big(p_\ep(\rho_\ep) \big)^{\frac{\beta-1}{\gamma}} \mathbf{1}_{\{\rho_\ep > M^0\}} 
\end{equation*}
with $\gamma > \beta-1$, so that $\Lambda_\ep(\rho_\ep)$ converges strongly to $0$ in $L^{\frac{\gamma}{\beta-1}}((0,T)\times \TT)$ (and thus in $L^2(0,T;L^{\frac{6}{5}}(\TT))$).
Passing to the limit in~\eqref{eq:Lambda_ep-2}, we have then
\[
\Pi = 0.
\]
We thus conclude that the triplet $(\rho, u, p)$ is a global weak solution of 
\begin{subnumcases}{\label{eq:xxx3}}
\partial_t \rho + \Div (\rho u) = 0 \\
\nabla p - 2 \Div(\mu \D(u)) + ru = f \\
0 \leq \rho \leq 1, ~\mathrm{spt} \ p \subset \{\rho = 1\},~ p \geq 0.
\end{subnumcases}

%%%%%%%%%%%%%%%%%%%%%%%%%%%%%%%%%%%%%%%%%

\subsection{Case $1 <\gamma < \beta - 1$}

In this case we expect the activation of the memory effects in the limit $\ep \rightarrow 0$.

\subsubsection{Existence of weak solutions at $\ep$ fixed}
First we observe that Lemma~\ref{lem:controls-1} still holds in this case, so that
\begin{equation*}
\big(\Lambda_{\ep,\delta}(\rho_{\ep,\delta})\big)_\delta \quad \text{is bounded in} \quad L^\infty\big(0,T;L^1(\TT)\big)
\end{equation*} 
and, since $\gamma < \beta-1$,
\[
\big(p_{\ep,\delta}(\rho_{\ep,\delta})\big)_\delta \quad \text{is bounded in} \quad L^\infty(0,T;L^{\frac{\beta-1}{\gamma}}(\TT)).
\]
With this control of the pressure and the energy estimate, we deduce from Lemma~\ref{lambdadivu} that
\begin{equation}\label{eq:lambdadivu-3}
\big(\lambda_{\ep,\delta}(\rho_{\ep,\delta}) \Div u_{\ep,\delta}\big)_\delta ~ \text{is bounded in}~
L^2\big(0,T;L^{\min\{2,\frac{\beta-1}{\gamma}\}}(\TT)\big).
\end{equation}

\bigskip
\noindent
\textit{Uniform upper bound on the density.}
The same arguments as before show that
\begin{equation*}
\|\rho_{\ep,\delta}\|_{L^\infty_{t,x}} \leq \bar{\rho} < \infty.
\end{equation*}
and
\begin{equation*}
\mathrm{meas} \,\big\{ x\in \TT , \ \rho_{\ep,\delta}(t,x) \geq 1-\delta \big\}  \leq C(\ep) \ \delta^{\beta-1} .
\end{equation*}

\bigskip
\noindent
\textit{Additional integrability on $\Lambda_{\ep,\delta}$.}
\begin{lem}\label{lem:controls-3}Let $(\rho_{\varepsilon, \delta}, u_{\varepsilon,\delta})$ be a global weak solution of the compressible Brinkman system~\eqref{eq:semi-stat-delta} with $1 <\gamma < \beta-1$.
	If initially $\|\Lambda_\ep(\rho^0_\ep)\|_{L^2} \leq C$, then there exist $C^1>0$ independent of $\delta,\ep$, and $C^2_\ep> 0$ independent of $\delta$, such that
	\begin{equation}
	\|\Lambda_{\ep,\delta}(\rho_{\ep,\delta})\|_{L^\infty L^2} + \|p_{\ep,\delta}(\rho_{\ep,\delta})\|_{L^\infty L^{\frac{2(\beta-1)}{\gamma}}}
	\leq C^1,
	\end{equation}
	\begin{equation}
	\|\lambda_{\ep,\delta}(\rho_{\ep,\delta})\|_{L^\infty L^{\frac{2(\beta-1)}{\beta}}} 
	\leq C^2_\ep.
	\end{equation}
\end{lem}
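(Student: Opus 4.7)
The plan is to mirror the proof of Lemma~\ref{lem:controls-2} almost verbatim, the main differences being that in the regime $\gamma<\beta-1$ the laws $p_\ep$ and $\lambda_\ep$ are now less singular near $\rho=1$ than $\Lambda_\ep$, and that we must be slightly more careful about $\ep$-uniformity. I would start by multiplying the renormalized continuity equation~\eqref{eq:Lambda_delta} by $\Lambda_{\ep,\delta}(\rho_{\ep,\delta})$ to obtain, as in~\eqref{eq:eta-Lambda}, an equation for $\Lambda_{\ep,\delta}^2$ whose source term is $-[\rho_{\ep,\delta}(\Lambda_{\ep,\delta})'_+(\rho_{\ep,\delta})+\lambda_{\ep,\delta}(\rho_{\ep,\delta})]\,\Lambda_{\ep,\delta}(\rho_{\ep,\delta})\,\Div u_{\ep,\delta}$. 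Using~\eqref{eq:lambda_divu} to replace $\lambda_{\ep,\delta}\Div u_{\ep,\delta}$, integrating in space, and dropping the non-negative term $\int_\TT \Lambda_{\ep,\delta} p_{\ep,\delta}$ appearing on the left, one arrives at a differential inequality
\[
\Dt \int_\TT \Lambda_{\ep,\delta}^2 \leq C\int_\TT \Lambda_{\ep,\delta}^2 + C\bigl(\|S_{\ep,\delta}\|_{L^2}^2+\|\Div u_{\ep,\delta}\|_{L^2}^2+\|\Lambda_{\ep,\delta}\|_{L^\infty L^1}^2 J(t)\bigr),
\]
where $J(t):=\|\lambda_{\ep,\delta}\Div u_{\ep,\delta}\|_{L^1}+\|p_{\ep,\delta}\|_{L^1}$. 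The quantities $\|S_{\ep,\delta}\|_{L^2}$, $\|\Div u_{\ep,\delta}\|_{L^2}$ are $(\ep,\delta)$-uniformly bounded by the energy inequality together with assumption~\eqref{hyp:f}, while Lemma~\ref{lem:controls-1} supplies $(\ep,\delta)$-uniform $L^1$-in-time control of $J$ and of $\|\Lambda_{\ep,\delta}\|_{L^\infty L^1}$. Together with the initial bound~\eqref{hyp:energy-t0-C2}, Gronwall's lemma then yields $\|\Lambda_{\ep,\delta}\|_{L^\infty L^2}\leq C^1$ uniformly in $\ep,\delta$.

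From this $L^\infty L^2$ bound, I would extract the Lebesgue controls on $p_{\ep,\delta}$ and $\lambda_{\ep,\delta}$ by a pointwise algebraic comparison. Splitting $\TT$ into $\{\rho_{\ep,\delta}\leq M^0\}$ (where all three laws are bounded by fixed constants independent of $\ep,\delta$) and the singular region, one verifies from the explicit definitions~\eqref{eq:p_delta}, \eqref{eq:lambda_delta}, \eqref{df:Lambda_delta} that on the latter
\[
p_{\ep,\delta}^{(\beta-1)/\gamma} \leq C\,\ep^{(\beta-1-\gamma)/\gamma}\,\Lambda_{\ep,\delta}, \qquad \lambda_{\ep,\delta}^{(\beta-1)/\beta} \leq C\,\ep^{-1/\beta}\,\Lambda_{\ep,\delta}.
\]
Squaring and integrating, and using the $L^\infty L^2$ bound just obtained, yields the claimed bounds in $L^\infty L^{2(\beta-1)/\gamma}$ and $L^\infty L^{2(\beta-1)/\beta}$. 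Crucially, since $\gamma<\beta-1$, the exponent $(\beta-1-\gamma)/\gamma$ is strictly positive, so the $\ep$-prefactor in the pressure inequality is bounded by $1$ for $\ep\leq \ep^0$ and the resulting constant $C^1$ for $p_{\ep,\delta}$ is $\ep$-uniform; by contrast $\ep^{-1/\beta}$ genuinely diverges, producing the $\ep$-dependent constant $C^2_\ep$ for $\lambda_{\ep,\delta}$ as announced.

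The main difficulty is to keep the Gronwall constant $\ep$-uniform, which is a more delicate issue than in Lemma~\ref{lem:controls-2}. This rests on two observations: first, the $L^1$ bounds provided by Lemma~\ref{lem:controls-1} are themselves $(\ep,\delta)$-uniform under~\eqref{hyp:bound-rho0} and~\eqref{hyp:energy-t0-C2}, since Cauchy--Schwarz turns the initial $L^2$ hypothesis on $\Lambda_\ep(\rho_\ep^0)$ into the $L^1$ control required there; second, the $L^2H^1$ bound on $u_{\ep,\delta}$ coming from~\eqref{eq:energy_delta} is $\ep$-uniform because the dissipation and drag on the left carry no $\ep$-factor, while the right-hand side remains controlled thanks to the pointwise comparison $H_\ep\lesssim \Lambda_\ep$ valid in the singular region for $\gamma<\beta-1$. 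Once this uniformity is secured, the rest of the argument goes through as a direct transcription of the proof of Lemma~\ref{lem:controls-2}.
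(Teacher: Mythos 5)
Your proof follows the paper's route exactly: multiply~\eqref{eq:Lambda_delta} by $\Lambda_{\ep,\delta}(\rho_{\ep,\delta})$, integrate in space, substitute $\lambda_{\ep,\delta}\Div u_{\ep,\delta}$ via~\eqref{eq:lambda_divu}, close with Gronwall using Lemma~\ref{lem:controls-1} and the energy inequality, and then read off the $p$ and $\lambda$ controls by pointwise comparison with $\Lambda_{\ep,\delta}$, the sign of $\beta-1-\gamma$ dictating which constant is $\ep$-uniform. One minor slip in your justification of uniformity: the $\ep$-uniform control of $\int_\TT H_{\ep,\delta}(\rho^0_\ep)$ on the right of~\eqref{eq:energy_delta} follows directly from $\rho^0_\ep\le R_\ep\to 0$ in~\eqref{hyp:bound-rho0}; the comparison $H_\ep\lesssim\Lambda_\ep$ you invoke holds only near $\rho=1$, whereas the initial density lies near $0$ (where in fact $\Lambda_\ep \lesssim H_\ep$).
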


\medskip
\begin{proof} We can reproduce exactly the same estimate on $\big(\Lambda_{\ep,\delta}(\rho_{\ep,\delta})\big)^{2}$ by multiplying Equation~\eqref{eq:Lambda_delta} by $\Lambda_{\ep,\delta}(\rho_{\ep,\delta})$.
We obtain
\begin{equation*}
\sup_{t \in [0,T]}\int_{\TT}{\big(\Lambda_{\ep,\delta}(\rho_{\ep,\delta})\big)^{2}} 
+ \dfrac{2\beta -1}{\beta-1} \int_{\TT}{\Lambda_{\ep,\delta}(\rho_{\ep,\delta}) p_{\ep,\delta} (\rho_{\ep,\delta})} 
\leq C + C\int_{\TT}{\big(\Lambda_{\ep,\delta}(\rho_{\ep}^0)\big)^{2}}.
\end{equation*}
Hence
\begin{equation*}\label{eq:Lambdadelta3}
\big(\Lambda_{\ep,\delta}(\rho_{\ep,\delta})\big)_\delta ~ \text{is bounded in} ~ L^\infty(0,T;L^2(\TT))
\end{equation*}
and consequently
\begin{equation*}\label{eq:pdelta3}
\big(p_{\ep,\delta}(\rho_{\ep,\delta})\big)_\delta  ~ \text{is bounded in} ~ L^\infty(0,T;L^{\frac{2(\beta-1)}{\gamma}}(\TT)),
\end{equation*}
\begin{equation*}\label{eq:lambdadelta3}
\big(\lambda_{\ep,\delta}(\rho_{\ep,\delta})\big)_\delta  ~ \text{is bounded in} ~ L^\infty(0,T;L^{\frac{2(\beta-1)}{\beta}}(\TT)).
\end{equation*}
This last control on $\lambda_\ep(\rho_\ep)$ being not uniform in $\ep$.
\end{proof}

\begin{remark}\label{rk:control-lambda}
	We assumed in this case that $1 <\gamma<\beta-1$, so that $\beta > 2$ and
	\[
	\dfrac{2(\beta-1)}{\beta} > 1.
	\]
	The bulk viscosity coefficient $\big(\lambda_{\ep,\delta}(\rho_{\ep,\delta})\big)_\delta$ is then bounded in $L^1((0,T) \times \TT)$.
\end{remark}

\bigskip
\noindent
\textit{Limit $\delta \rightarrow 0$.}
We can pass to the limit in the weak formulation of the equations exactly in the same way as before, showing first the strong convergence of the density and using the control of $\sqrt{\lambda_{\ep,\delta}(\rho_{\ep,\delta})}$ in $L^2\big((0,T)\times \TT\big)$ to identify the limit of the bulk viscosity term:
\[
\lambda_{\ep,\delta}(\rho_{\ep,\delta})\Div u_{\ep,\delta}
\longrightarrow 
\lambda_\ep(\rho_\ep)\Div u_\ep ~ \text{weakly in}~ L^1((0,T)\times \TT).
\]

\subsubsection{Limit $\ep \rightarrow 0$}
Thanks to the uniform controls of the singular quantities, we ensure the following convergences
\begin{equation}
\Lambda_\ep(\rho_\ep) \longrightarrow \Lambda \quad \text{weakly-* in} ~ L^\infty(0,T;L^2(\TT)),
\end{equation}
\begin{equation*}
\lambda_\ep(\rho_\ep) \Div u_\ep \longrightarrow -\Pi \quad \text{weakly in} ~
\mathcal{M}((0,T)\times \TT).
\end{equation*}
In addition,
	\[
	\Lambda_\ep(\rho_\ep) \mathbf{1}_{\{\rho_\ep \leq 1-\ep^{1/\beta} \}} = \dfrac{\ep}{\beta-1}\dfrac{\rho_\ep^\beta}{\ep^{\frac{\beta-1}{\beta}}} \leq  \dfrac{\ep^{1/\beta}}{\beta-1}
	~\underset{\ep \rightarrow 0}{\longrightarrow} ~ 0.  
	\]
Hence
\[
\mathrm{spt}\, \Lambda \subset \{ \rho = 1\},
\]
or written differently, the product $\rho \Lambda$ making sense almost everywhere,
\begin{equation}
(1-\rho) \Lambda = 0.
\end{equation}
On the other hand, we have $\gamma < \beta-1$, which yields
\begin{equation*}
p_\ep(\rho_\ep) \longrightarrow 0 \quad\text{strongly in} ~ L^1((0,T)\times \TT).
\end{equation*}
Compared to the previous case, the limit $\Lambda$ is not $0$ and we close the system by passing to the limit in 
\[
\partial_t \Lambda_\ep(\rho_\ep) + \Div(\Lambda_\ep(\rho_\ep) u_\ep) = - \lambda_\ep(\rho_\ep)\Div u_\ep.
\]
Here, $\Lambda_\ep$ is controlled in $L^\infty(0,T; L^2(\TT))$ and $\|\partial_t \Lambda_\ep\|_{L^1(W^{-1,1})}\leq C$, while $u_\ep$ is bounded in $L^2(0,T;(H^1(\TT))^3)$.
We can then pass to the limit in the product $\Lambda_\ep(\rho_\ep) u_\ep$ thanks to Lemma~\ref{lem:compensated_compactness} and get the limit equation
\[
\partial_t \Lambda + \Div(\Lambda u) = \Pi.
\]
We have thus justified the activation of memory effects in the congestion limit.
The tuple $(\rho, u, \pi,\Lambda)$ is finally a global weak solution of
\begin{subnumcases}{\label{eq:xxx}}
\partial_t \rho + \Div (\rho u) = 0 \\
\nabla \Pi - 2 \Div(\mu \D(u)) + r u  = f \\
\partial_t \Lambda + \Div(\Lambda u) = \Pi\\
0 \leq \rho \leq 1, ~(1-\rho)\Lambda = 0,~ \Lambda \geq 0
\end{subnumcases}

%%%%%%%%%%%%%%%%%%%%%%%%%%%%%%%%%%%%%

\subsection{Case $1<\gamma = \beta-1$}
The estimates remain mostly unchanged compared to the previous case $\gamma < \beta-1$.
Nevertheless, in the limit $\ep \rightarrow 0$, we do not have the convergence of $p_\ep(\rho_\ep)$ to $0$ anymore and $p_\ep(\rho_\ep)$ converges now to $p = (\beta-1) \Lambda$.
The limit system then writes
\begin{subnumcases}{\label{eq:xxx2}}
\partial_t \rho + \Div (\rho u) = 0 \\
(\beta-1)\nabla \Lambda + \nabla \Pi - 2 \Div(\mu \D(u)) + ru = f \\
\partial_t \Lambda + \Div(\Lambda u) = \Pi\\
0 \leq \rho \leq 1, ~(1-\rho)\Lambda = 0,~ \Lambda \geq 0
\end{subnumcases}

%%%%%%%%%%%%%%%%%%%%%%%%%%%%%%%%%%%%%

\bigskip

\noindent {\bf Acknowledgments.} C. Perrin is supported by a PEPS project « Jeunes chercheuses et jeunes chercheurs ».
 Part of this work was carried out when D. Bresch was inviting in Prague by the GA\v CR project P201-16-032308 and RVO 67985840. \v S. Ne\v casov\' a is supported by GA\v CR project P201-16-032308 and RVO 67985840. D. Bresch is also supported by the ANR project FRAISE.
\newpage
%%%%%%%%%%%%%%%%%%%%%%%%%%%%%%%%%%%
%%%%%%%%%%%%%%%%%%%%%%%%%%%%%%%%%%%


\begin{thebibliography}{10}
	
	\bibitem{allaire1991}
	{\sc Allaire, G.}
	\newblock Homogenization of the {N}avier-{S}tokes equations in open sets
	perforated with tiny holes i. abstract framework, a volume distribution of
	holes.
	\newblock {\em Archive for Rational Mechanics and Analysis 113}, 3 (1991),
	209--259.
	
	\bibitem{andreotti2013}
	{\sc Andreotti, B., Forterre, Y., and Pouliquen, O.}
	\newblock {\em Granular media: between fluid and solid}.
	\newblock Cambridge University Press, 2013.
	
	\bibitem{berthelin2002}
	{\sc Berthelin, F.}
	\newblock Existence and weak stability for a pressureless model with unilateral
	constraint.
	\newblock {\em Mathematical Models and Methods in Applied Sciences 12}, 02
	(2002), 249--272.
	
	\bibitem{berthelin2017}
	{\sc Berthelin, F.}
	\newblock Theoretical study of a multi-dimensional pressureless model with
	unilateral constraint.
	\newblock {\em Accepted for publication in SIAM, Journal on Mathematical
		Analysis 49}, 3 (2017), 2287--2320.
	
	\bibitem{bouchut2000}
	{\sc Bouchut, F., Brenier, Y., Cortes, J., and Ripoll, J.-F.}
	\newblock A hierarchy of models for two-phase flows.
	\newblock {\em Journal of NonLinear Science 10}, 6 (2000), 639--660.
	
	\bibitem{bresch2018}
	{\sc Bresch, D., and Jabin, P.-E.}
	\newblock Global existence of weak solutions for compresssible
	{N}avier--{S}tokes equations: {T}hermodynamically unstable pressure and
	anisotropic viscous stress tensor.
	\newblock {\em arXiv preprint arXiv:1507.04629\/} (2015).
	
	\bibitem{bresch2014}
	{\sc Bresch, D., Perrin, C., and Zatorska, E.}
	\newblock Singular limit of a {N}avier--{S}tokes system leading to a
	free/congested zones two-phase model.
	\newblock {\em Comptes Rendus Mathematique 352}, 9 (2014), 685--690.
	
	\bibitem{bresch2017}
	{\sc Bresch, D., and Renardy, M.}
	\newblock Development of congestion in compressible flow with singular
	pressure.
	\newblock {\em Asymptotic Analysis 103}, 1-2 (2017), 95--101.
	
	\bibitem{coussot2005}
	{\sc Coussot, P.}
	\newblock {\em Rheometry of pastes, suspensions, and granular materials:
		applications in industry and environment}.
	\newblock John Wiley \& Sons, 2005.
	
	\bibitem{danchin2017}
	{\sc Danchin, R., and Mucha, P.~B.}
	\newblock Compressible {N}avier--{S}tokes system: Large solutions and
	incompressible limit.
	\newblock {\em Advances in Mathematics 320\/} (2017), 904--925.
	
	\bibitem{degond2011}
	{\sc Degond, P., Hua, J., and Navoret, L.}
	\newblock Numerical simulations of the {E}uler system with congestion
	constraint.
	\newblock {\em Journal of Computational Physics 230}, 22 (2011), 8057--8088.
	
	\bibitem{degond2017}
	{\sc Degond, P., Minakowski, P., Navoret, L., and Zatorska, E.}
	\newblock Finite volume approximations of the {E}uler system with variable
	congestion.
	\newblock {\em Computers \& Fluids\/} (2017).
	
	\bibitem{desjardins1999}
	{\sc Desjardins, B., Grenier, E., Lions, P.-L., and Masmoudi, N.}
	\newblock Incompressible limit for solutions of the isentropic
	{N}avier--{S}tokes equations with {D}irichlet boundary conditions.
	\newblock {\em Journal de Math{\'e}matiques Pures et Appliqu{\'e}es 78}, 5
	(1999), 461--471.
	
	\bibitem{desvillettes2008}
	{\sc Desvillettes, L., Golse, F., and Ricci, V.}
	\newblock The mean-field limit for solid particles in a {N}avier-{S}tokes flow.
	\newblock {\em Journal of Statistical Physics 131}, 5 (2008), 941--967.
	
	\bibitem{ducomet2013}
	{\sc Ducomet, B., and Šárka Nečasová}.
	\newblock On the 2d compressible {N}avier–{S}tokes system with
	density-dependent viscosities.
	\newblock {\em Nonlinearity 26}, 6 (2013), 1783.
	
	\bibitem{el2017etude}
	{\sc El~Dine, H.~N.}
	\newblock {\em {\'E}tude math{\'e}matique et num{\'e}rique pour le mod{\`e}le
		Darcy-Brinkman pour les {\'e}coulements diphasiques en milieu poreux}.
	\newblock PhD thesis, Lebanese University-EDST; Ecole Centrale de Nantes (ECN),
	2017.
	
	\bibitem{feireisl2002}
	{\sc Feireisl, E.}
	\newblock Compressible {N}avier--{S}tokes equations with a non-monotone
	pressure law.
	\newblock {\em Journal of Differential Equations 184}, 1 (2002), 97--108.
	
	\bibitem{feireisl2016}
	{\sc Feireisl, E., Lu, Y., and M{\'a}lek, J.}
	\newblock On {PDE} analysis of flows of quasi-incompressible fluids.
	\newblock {\em ZAMM-Journal of Applied Mathematics and Mechanics/Zeitschrift
		f{\"u}r Angewandte Mathematik und Mechanik 96}, 4 (2016), 491--508.
	
	\bibitem{feireisl2009}
	{\sc Feireisl, E., and Novotn{\`y}, A.}
	\newblock {\em Singular limits in thermodynamics of viscous fluids}.
	\newblock Springer, 2009.
	
	\bibitem{huli2016}
	{\sc Huang, X., and Li, J.}
	\newblock Existence and blowup behavior of global strong solutions to the
	two-dimensional barotrpic compressible {N}avier--{S}tokes system with vacuum
	and large initial data.
	\newblock {\em Journal de Math{\'e}matiques Pures et Appliqu{\'e}es 106}, 1
	(2016), 123--154.
	
	\bibitem{lefebvre2009}
	{\sc Lefebvre, A.}
	\newblock Numerical simulation of gluey particles.
	\newblock {\em ESAIM: Mathematical Modelling and Numerical Analysis 43}, 1
	(2009), 53--80.
	
	\bibitem{lefebvre2011}
	{\sc Lefebvre-Lepot, A., and Maury, B.}
	\newblock Micro-macro modelling of an array of spheres interacting through
	lubrication forces.
	\newblock {\em Advances in Mathematical Sciences and Applications 21}, 2
	(2011), 535.
	
	\bibitem{lions1998}
	{\sc Lions, P.-L.}
	\newblock {\em {M}athematical {T}opics in {F}luid {M}echanics: {V}olume 2:
		{C}ompressible {M}odels}, vol.~2.
	\newblock Oxford University Press, 1998.
	
	\bibitem{lions1998Mach}
	{\sc Lions, P.-L., and Masmoudi, N.}
	\newblock Incompressible limit for a viscous compressible fluid.
	\newblock {\em Journal de math{\'e}matiques pures et appliqu{\'e}es 77}, 6
	(1998), 585--627.
	
	\bibitem{lions1999}
	{\sc Lions, P.-L., and Masmoudi, N.}
	\newblock On a free boundary barotropic model.
	\newblock In {\em Annales de l'IHP Analyse non lin{\'e}aire\/} (1999), vol.~16,
	pp.~373--410.
	
	\bibitem{maury2012}
	{\sc Maury, B.}
	\newblock Prise en compte de la congestion dans les modeles de mouvements de
	foules.
	\newblock {\em Actes des colloques Caen\/} (2012).
	
	\bibitem{maury2017}
	{\sc Maury, B., and Preux, A.}
	\newblock Pressureless {E}uler equations with maximal density constraint: a
	time-splitting scheme.
	\newblock {\em Topological Optimization and Optimal Transport: In the Applied
		Sciences 17\/} (2017), 333.
	
	\bibitem{mecherbet2018}
	{\sc Mecherbet, A., and Hillairet, M.}
	\newblock Lp estimates for the homogeneization of {S}tokes problem in a
	perforated domain.
	\newblock {\em Journal of the Institute of Mathematics of Jussieu\/} (2018),
	1–28.
	
	\bibitem{novotny2004}
	{\sc Novotny, A., and Straskraba, I.}
	\newblock {\em Introduction to the mathematical theory of compressible flow},
	vol.~27.
	\newblock Oxford University Press on Demand, 2004.
	
	\bibitem{perepelitsa2006}
	{\sc Perepelitsa, M.}
	\newblock On the global existence of weak solutions for the {N}avier-{S}tokes
	equations of compressible fluid flows.
	\newblock {\em SIAM journal on mathematical analysis 38}, 4 (2006), 1126--1153.
	
	\bibitem{perrin2016}
	{\sc Perrin, C.}
	\newblock Pressure-dependent viscosity model for granular media obtained from
	compressible {N}avier-{S}tokes equations.
	\newblock {\em Appl. Math. Res. Express. AMRX}, 2 (2016), 289--333.
	
	\bibitem{perrin2017}
	{\sc Perrin, C.}
	\newblock Modelling of phase transitions in one-dimensional granular flows.
	\newblock {\em ESAIM: Proceedings and Surveys 58\/} (2017), 78--97.
	
	\bibitem{peza2015}
	{\sc Perrin, C., and Zatorska, E.}
	\newblock Free/congested two-phase model from weak solutions to
	multi-dimensional compressible {N}avier-{S}tokes equations.
	\newblock {\em Communications in Partial Differential Equations 40}, 8 (2015),
	1558--1589.
	
	\bibitem{perthame2014}
	{\sc Perthame, B., Quir{\'o}s, F., and V{\'a}zquez, J.~L.}
	\newblock The {H}ele--{S}haw asymptotics for mechanical models of tumor growth.
	\newblock {\em Archive for Rational Mechanics and Analysis 212}, 1 (2014),
	93--127.
	
	\bibitem{perthame2015}
	{\sc Perthame, B., and Vauchelet, N.}
	\newblock Incompressible limit of a mechanical model of tumour growth with
	viscosity.
	\newblock {\em Phil. Trans. R. Soc. A 373}, 2050 (2015), 20140283.
	
	\bibitem{vaigant1995}
	{\sc Vaigant, V.~A., and Kazhikhov, A.~V.}
	\newblock On existence of global solutions to the two-dimensional
	{N}avier-{S}tokes equations for a compressible viscous fluid.
	\newblock {\em Siberian Mathematical Journal 36}, 6 (1995), 1108--1141.
	
	\bibitem{vauchelet2017}
	{\sc Vauchelet, N., and Zatorska, E.}
	\newblock Incompressible limit of the {N}avier—{S}tokes model with a growth
	term.
	\newblock {\em Nonlinear Analysis 163\/} (2017), 34--59.
	
\end{thebibliography}
\end{document}